\documentclass{amsart}%
\usepackage{amssymb}
\usepackage{amsfonts}
\usepackage{geometry}
\usepackage{graphicx}
\usepackage{amsmath}%
\setcounter{MaxMatrixCols}{30}
\providecommand{\U}[1]{\protect\rule{.1in}{.1in}}
\newtheorem{theorem}{Theorem}
\theoremstyle{plain}

\newtheorem{corollary}[theorem]{Corollary}

\newtheorem{definition}[theorem]{Definition}

\newtheorem{lemma}[theorem]{Lemma}
\newtheorem{notation}[theorem]{Notation}

\newtheorem{remark}[theorem]{Remark}

\numberwithin{equation}{section}
\geometry{left=1in,right=1in,top=1in,bottom=1in}
\setcounter{tocdepth}{3}
\begin{document}
\title[A smooth Alpert testing characterization of convolution type]{A smooth Alpert testing characterization of convolution type for the Fourier
extension conjecture on paraboloids}
\author{Cristian Rios}
\address{University of Calgary\\
Calgary, Alberta, Canada}
\email{crios@ucalgary.ca}
\author[E. T. Sawyer]{Eric T. Sawyer$^{\dagger}$}
\address{Eric T. Sawyer, Department of Mathematics and Statistics\\
McMaster University\\
1280 Main Street West\\
Hamilton, Ontario L8S 4K1 Canada}
\email{sawyer@mcmaster.ca}
\thanks{$\dagger$ Research supported in part by a grant from the National Science and
Engineering Research Council of Canada.}
\date{\today }

\begin{abstract}
We prove a smooth Alpert testing characterization of convolution type for the
Fourier extension conjecture on paraboloids, that further extends the
characterizations in \cite{RiSa3}.

\end{abstract}
\maketitle
\tableofcontents

\section{Introduction}

The Fourier extension conjecture in $d\geq2$ dimensions arose from unpublished
work of E. Stein in 1967, see e.g. \cite[see the Notes at the end of Chapter
IX, p. 432, where Stein proved the extension conjecture for $q>\frac{4d}{d-1}%
$]{Ste2} and \cite{Ste}, and is the inequality%
\begin{equation}
\left\Vert \widehat{f\sigma_{d-1}}\right\Vert _{L^{q}\left(  \mathbb{R}%
^{d}\right)  }\leq\operatorname*{Four}_{L^{q}\rightarrow L^{q}}\left\Vert
f\right\Vert _{L^{q}\left(  \sigma_{d-1}\right)  },\ \text{for }f\in
L^{q}\left(  \sigma_{d-1}\right)  \text{ and }q>\frac{2d}{d-1},\label{FEC}%
\end{equation}
where $\sigma_{d-1}$ is surface measure on the sphere $\mathbb{S}^{d-1}$ or a
smooth compactly supported measure on the paraboloid $\mathbb{P}^{d-1}$, and
$\operatorname*{Four}_{L^{q}\rightarrow L^{q}}$ is the least constant for
which (\ref{FEC}) holds. The case $d=2$ of the Fourier extension conjecture
for the circle was proved over half a century ago by L. Carleson and P.
Sj\"{o}lin \cite{CaSj}, see also C. Fefferman \cite{Fef} and A. Zygmund
\cite{Zyg}.

There is a long history of progress on this conjecture in the past half
century, and we refer the reader to the excellent survey articles and books by
Thomas Wolff \cite{Wol}, Terence Tao \cite{Tao}, Pertti Mattila \cite{Mat},
Betsy Stovall \cite{Sto} and Ciprian Demeter \cite{Dem} for this history up to
2020, as well as for connections with related conjectures and topics.

The purpose of this paper is to give the following smooth Alpert testing
characterization of \emph{convolution} type for the Fourier extension
conjecture on the paraboloid.

Let $h_{I_{s};\kappa}^{\eta}\left(  x\right)  =2^{\frac{d-1}{2}s}h_{\left[
0,1\right)  ^{d-1};\kappa}^{\eta}\left(  2^{s}x\right)  $ be the $L^{2}%
$-normalized dilation of the smooth Alpert mother wavelet $h_{\left[
0,1\right)  ^{d-1};\kappa}^{\eta}$ constructed in \cite{Saw7}, and having
vanishing moments up to order $\kappa$. Set $k_{I_{s};\kappa}^{\eta}%
=2^{\frac{d-1}{2}s}h_{I_{s};\kappa}^{\eta}$ to be the $L^{1}$ normalized
wavelet. Finally, let $\mathcal{E}_{S}f\equiv\widehat{\Phi_{\ast}f}$ be the
Fourier extension operator for the surface $S$ given as the graph of $\Phi$,
and where $\Phi_{\ast}f$ is the pushforward of the measure $f\left(  x\right)
dx$.

\begin{theorem}
\label{reform}The Fourier extension conjecture holds for a smooth compact
piece $S$ of the paraboloid $\mathbb{P}^{d-1}$ in $\mathbb{R}^{d}$ \emph{if
and only if} for every $\varepsilon>0$ there is a positive constant
$C_{\varepsilon}$ and a positive integer $\kappa=\kappa_{\varepsilon}$ such
that%
\begin{align}
& \left\Vert \mathcal{E}_{S}\left(  k_{I_{s};\kappa}^{\eta}\ast f\right)
\right\Vert _{L^{\frac{2d}{d-1}}\left(  B\left(  0,2^{\left(  1+\varepsilon
\right)  s}\right)  \right)  }\leq C_{\varepsilon}2^{\varepsilon s}\left\Vert
f\right\Vert _{L^{\frac{2d}{d-1}}\left(  U\right)  },\label{reform'}\\
& \ \ \ \ \ \ \ \ \ \ \text{for all }s\in\mathbb{N}\text{ and }f\in
L^{\frac{2d}{d-1}}\left(  U\right)  .\nonumber
\end{align}

\end{theorem}

\begin{remark}
Tao's $\varepsilon$-removal theorem \cite{Tao1} shows that the Fourier
extension conjecture is implied by the local inequality,%
\[
\left\Vert \mathcal{E}f\right\Vert _{L^{\frac{2d}{d-1}}\left(  B\left(
0,2^{s}\right)  \right)  }\leq C_{\varepsilon}2^{\varepsilon s}\left\Vert
f\right\Vert _{L^{\frac{2d}{d-1}}\left(  U\right)  }.
\]
Thus Theorem \ref{reform} can be viewed as a variant in which $f$ is
preconvolved with a smooth Alpert wavelet having as many vanishing moments as
we wish.
\end{remark}

\section{Preliminaries}

We first recall the definition and properties of smooth Alpert wavelets from
\cite[Section 2]{Saw7} and \cite{Saw8} in $\mathbb{R}^{d}$ using a bounded
invertible operator $T_{\mathcal{G}}$, and establish a localization property
of this operator $T_{\mathcal{G}}$. Then we recall some known work on bilinear
inequalities for Fourier extension conjecture.

\subsection{Smooth Alpert wavelets}

Recall from \cite{Saw7} the smooth Alpert projections $\left\{  \bigtriangleup
_{Q;\kappa}\right\}  _{Q\in\mathcal{D}}$ and corresponding wavelets $\left\{
h_{Q;\kappa}^{a}\right\}  _{Q\in\mathcal{G},\ a\in\Gamma_{d}}$ of moment
vanishing order $\kappa$ in $\mathbb{R}^{d}$. In fact, $\left\{  h_{Q;\kappa
}^{a}\right\}  _{a\in\Gamma}$ is an orthonormal basis for the finite
dimensional vector subspace of $L^{2}$ that consists of linear combinations of
the indicators of\ the children $\mathfrak{C}\left(  Q\right)  $ of $Q$
multiplied by polynomials of degree at most $\kappa-1$, and such that the
linear combinations have vanishing moments on the cube $Q$ up to order
$\kappa-1$:%
\[
L_{Q;k}^{2}\left(  \mu\right)  \equiv\left\{  f=%
{\displaystyle\sum\limits_{Q^{\prime}\in\mathfrak{C}\left(  Q\right)  }}
\mathbf{1}_{Q^{\prime}}p_{Q^{\prime};k}\left(  x\right)  :\int_{Q}f\left(
x\right)  x_{i}^{\ell}d\mu\left(  x\right)  =0,\ \ \ \text{for }0\leq\ell
\leq\kappa-1\text{ and }1\leq i\leq d\right\}  ,
\]
where $p_{Q^{\prime};\kappa}\left(  x\right)  =\sum_{\alpha\in\mathbb{Z}%
_{+}^{n}:\left\vert \alpha\right\vert \leq\kappa-1\ }a_{Q^{\prime};\alpha
}x^{\alpha}$ is a polynomial in $\mathbb{R}^{d}$ of degree $\left\vert
\alpha\right\vert =\alpha_{1}+...+\alpha_{d}$ at most $\kappa-1$, and
$x^{\alpha}=x_{1}^{\alpha_{1}}x_{2}^{\alpha_{2}}...x_{d-1}^{\alpha_{d-1}}$.
Let $d_{Q;\kappa}\equiv\dim L_{Q;\kappa}^{2}\left(  \mu\right)  $ be the
dimension of the finite dimensional linear space $L_{Q;\kappa}^{2}\left(
\mu\right)  $. Moreover, for each $a\in\Gamma_{d}$, we may assume the wavelet
$h_{Q;\kappa}^{a}$ is a translation and dilation of the unit wavelet
$h_{Q_{0};\kappa}^{a}$, where $Q_{0}=\left[  0,1\right)  ^{d}$ is the unit
cube in $\mathbb{R}^{d}$.

Continuing to recall \cite{Saw7}, for a small positive constant $\eta>0$, we
define a smooth approximate identity by $\phi_{\eta}\left(  x\right)
\equiv\eta^{-n}\phi\left(  \frac{x}{\eta}\right)  $ where $\phi\in
C_{c}^{\infty}\left(  B_{\mathbb{R}^{d}}\left(  0,1\right)  \right)  $ has
unit integral, $\int_{\mathbb{R}^{d}}\phi\left(  x\right)  dx=1$, and
vanishing moments of \emph{positive} order less than $\kappa$, i.e.
\begin{equation}
\int\phi\left(  x\right)  x^{\gamma}dx=\delta_{\left\vert \gamma\right\vert
}^{0}=\left\{
\begin{array}
[c]{ccc}%
1 & \text{ if } & \left\vert \gamma\right\vert =0\\
0 & \text{ if } & 0<\left\vert \gamma\right\vert <\kappa
\end{array}
\right.  .\label{van pos}%
\end{equation}
The \emph{smooth} Alpert `wavelets' are defined by%
\[
h_{Q;\kappa}^{a,\eta}\equiv h_{Q;\kappa}^{a}\ast\phi_{\eta\ell\left(
Q\right)  }=h_{Q;\kappa}^{a}\ast\phi_{s}^{\eta},\ \ \ \ \ \text{if }%
\ell\left(  Q\right)  =2^{-s},
\]
where a consequence of the moment vanishing of $\phi_{s}^{\eta}$ is that
$h_{Q;\kappa}^{a,\eta}$ and $h_{Q;\kappa}^{a}$ coincide outside a small $\eta
$-halo of the skeleton $\operatorname*{Skel}Q\equiv\bigcup\partial Q^{\prime}$
of $Q$, where the union is taken over the dyadic children $Q^{\prime}$ of $Q$,
which in turn leads to the boundedness and invertibility on $L^{p}\left(
\mathbb{R}^{d}\right)  $ of the linear operator $S_{\kappa,\eta}^{\mathcal{G}%
}$ defined below. We\ also have for $0\leq\left\vert \beta\right\vert <\kappa
$,%
\begin{align*}
&  \int h_{Q;\kappa}^{a,\eta}\left(  x\right)  x^{\beta}dx=\int\phi_{\eta
\ell\left(  I\right)  }\ast h_{Q;\kappa}^{a}\left(  x\right)  x^{\beta}%
dx=\int\int\phi_{\eta\ell\left(  I\right)  }\left(  y\right)  h_{Q;\kappa}%
^{a}\left(  x-y\right)  x^{\beta}dx\\
&  =\int\phi_{\eta\ell\left(  I\right)  }\left(  y\right)  \left\{  \int
h_{Q;\kappa}^{a}\left(  x-y\right)  x^{\beta}dx\right\}  dy=\int\phi_{\eta
\ell\left(  I\right)  }\left(  y\right)  \left\{  \int h_{Q;\kappa}^{a}\left(
x\right)  \left(  x+y\right)  ^{\beta}dx\right\}  dy\\
&  =\int\phi_{\eta\ell\left(  I\right)  }\left(  y\right)  \left\{  0\right\}
dy=0,
\end{align*}
by translation invariance of Lebesgue measure.

Define $S_{\kappa,\eta}^{\mathcal{G}}$ to be the bounded invertible linear
operator on $L^{p}\left(  \mathbb{R}^{d}\right)  $ that satisfies
$S_{\kappa,\eta}^{\mathcal{G}}h_{Q;\kappa}^{a}=h_{Q;\kappa}^{a,\eta}$, and
then define%
\[
\bigtriangleup_{I;\kappa}^{\eta}f\equiv\sum_{a\in\Gamma_{d}}\left\langle
\left(  S_{\eta}^{\mathcal{G}}\right)  ^{-1}f,h_{I;\kappa}^{a}\right\rangle
h_{I;\kappa}^{a,\eta}=\left(  \bigtriangleup_{I;\kappa}f\right)  \ast
\phi_{\eta\ell\left(  I\right)  }\ .
\]
The existence of such an operator $S_{\kappa,\eta}^{\mathcal{G}}$ was
established in \cite{Saw7}.

\begin{theorem}
[\cite{Saw7}]\label{reproducing}Let $d\geq2$ and $\kappa\in\mathbb{N}$ with
$\kappa>\frac{d}{2}$. Then there is $\eta_{0}>0$ depending on $n$ and $\kappa
$\ such that for all $0<\eta<\eta_{0}$, and for all grids $\mathcal{G}$ in
$\mathbb{R}^{d}$, and all $1<p<\infty$, there is a bounded invertible operator
$S_{\eta}^{\mathcal{G}}=S_{\kappa,\eta}^{\mathcal{G}}$ on $L^{p}$, and a
positive constant $C_{p,d,\eta}$ such that the collection of functions
$\left\{  h_{I;\kappa}^{a,\eta}\right\}  _{I\in\mathcal{G},\ a\in\Gamma_{d}}$
is a $C_{p,d,\eta}$-frame for $L^{p}$, by which we mean,%
\begin{align}
f\left(  x\right)   &  =\sum_{I\in\mathcal{G},\ a\in\Gamma_{d}}\bigtriangleup
_{I;\kappa}^{\eta}f\left(  x\right)  ,\ \ \ \ \ \text{for a.e. }x\in
\mathbb{R}^{d}\text{, and for all }f\in L^{p},\label{bounded below}\\
\text{where }\bigtriangleup_{I;\kappa}^{\eta}f &  \equiv\sum_{a\in\Gamma_{d}%
}\left\langle \left(  S_{\eta}^{\mathcal{G}}\right)  ^{-1}f,h_{I;\kappa}%
^{a}\right\rangle \ h_{I;\kappa}^{a,\eta}\ ,\nonumber
\end{align}
and with convergence of the sum in both the $L^{p}$ norm and almost
everywhere, and%
\begin{equation}
\frac{1}{C_{p,d,\eta}}\left\Vert f\right\Vert _{L^{p}}\leq\left\Vert \left(
\sum_{I\in\mathcal{G}}\left\vert \bigtriangleup_{I;\kappa}^{\eta}f\right\vert
^{2}\right)  ^{\frac{1}{2}}\right\Vert _{L^{p}}\leq C_{p,d,\eta}\left\Vert
f\right\Vert _{L^{p}},\ \ \ \ \ \text{for all }f\in L^{p}.\label{square est}%
\end{equation}

\end{theorem}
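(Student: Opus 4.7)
\textbf{Proof plan for Theorem \ref{reproducing}.} The strategy is to construct $S_{\kappa,\eta}^{\mathcal{G}}$ as a small $L^{p}$-perturbation of the identity, and then transfer the known $L^{p}$ theory of the classical (rough) Alpert wavelets to the smooth versions. Defining $S_{\kappa,\eta}^{\mathcal{G}}$ on the dense subspace spanned by the rough Alpert basis by $S_{\kappa,\eta}^{\mathcal{G}} h_{I;\kappa}^{a} = h_{I;\kappa}^{a,\eta}$ and using $h_{I;\kappa}^{a,\eta} = h_{I;\kappa}^{a}\ast\phi_{\eta\ell(I)}$, the operator can be written as the multiscale convolution
\begin{equation*}
S_{\kappa,\eta}^{\mathcal{G}} f \;=\; \sum_{I\in\mathcal{G}} (\bigtriangleup_{I;\kappa} f)\ast\phi_{\eta\ell(I)}.
\end{equation*}
It therefore suffices to prove the perturbation estimate $\|S_{\kappa,\eta}^{\mathcal{G}} - \mathrm{Id}\|_{L^{p}\to L^{p}} \leq C_{p,d}\,\eta^{\delta}$ for some $\delta>0$; once this is in hand, Neumann series produce a bounded inverse $(S_{\kappa,\eta}^{\mathcal{G}})^{-1}$ on $L^{p}$ for all sufficiently small $\eta$, uniformly in the grid $\mathcal{G}$.

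The perturbation estimate is the heart of the proof. Each summand $(\bigtriangleup_{I;\kappa}f)\ast\phi_{\eta\ell(I)} - \bigtriangleup_{I;\kappa}f$ is essentially supported in an $O(\eta\ell(I))$-halo of the skeleton of $I$: indeed $\bigtriangleup_{I;\kappa}f$ is a piecewise polynomial on the children $\mathfrak{C}(I)$, and the moment-vanishing identity (\ref{van pos}) for $\phi$ guarantees that convolution with $\phi_{\eta\ell(I)}$ reproduces any polynomial of degree $<\kappa$ exactly, so the difference vanishes away from the halo where the polynomial pieces must be matched across the skeleton. On the halo itself, a Taylor expansion combined with the cancellation in (\ref{van pos}) yields a pointwise bound of the form $\eta^{\kappa}$ times a maximal-function majorant of $\bigtriangleup_{I;\kappa}f$, while the halo has measure $O(\eta\,|I|)$. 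Combining this scalewise bound with the classical vector-valued Alpert square-function equivalence (valid for $1<p<\infty$ when $\kappa>d/2$, as in \cite{Saw7}) together with a Fefferman-Stein vector-valued maximal inequality to handle the halos then produces the required $\eta^{\delta}$ gain.

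Once $S_{\kappa,\eta}^{\mathcal{G}}$ is invertible, the reproducing formula (\ref{bounded below}) is immediate: setting $g = (S_{\kappa,\eta}^{\mathcal{G}})^{-1} f$, the classical Alpert expansion of $g$ converges in $L^{p}$ and a.e., and applying the bounded operator $S_{\kappa,\eta}^{\mathcal{G}}$ termwise yields
\begin{equation*}
f \;=\; S_{\kappa,\eta}^{\mathcal{G}} g \;=\; \sum_{I,a}\langle g, h_{I;\kappa}^{a}\rangle\, h_{I;\kappa}^{a,\eta} \;=\; \sum_{I}\bigtriangleup_{I;\kappa}^{\eta} f,
\end{equation*}
with convergence inherited from the classical expansion of $g$. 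The square-function estimate (\ref{square est}) then follows from the identity $\bigtriangleup_{I;\kappa}^{\eta} f = (\bigtriangleup_{I;\kappa} g)\ast\phi_{\eta\ell(I)}$, the classical Alpert square-function equivalence applied to $g$, and a Fefferman-Stein argument to pass the convolutions through the $\ell^{2}$ square function, combined with the $L^{p}$-boundedness of $(S_{\kappa,\eta}^{\mathcal{G}})^{\pm 1}$ to convert between $f$ and $g$. The main obstacle is the perturbation estimate itself: the operator $S_{\kappa,\eta}^{\mathcal{G}} - \mathrm{Id}$ aggregates contributions from every dyadic scale with genuinely multiscale behavior, so a scalar Calderón-Zygmund bound does not suffice; one must run a vector-valued argument that coordinates the moment vanishing of $\phi$ in (\ref{van pos}) with the orthogonality of the $\bigtriangleup_{I;\kappa}$'s, and the hypothesis $\kappa>d/2$ enters precisely to make the underlying $L^{p}$ Littlewood-Paley theory for Alpert projections applicable and to extract a positive power of $\eta$.
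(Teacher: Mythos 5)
This theorem is imported from \cite{Saw7} and the paper offers no proof of its own, but your outline coincides with the construction the paper recalls: $h_{I;\kappa}^{a,\eta}=h_{I;\kappa}^{a}\ast\phi_{\eta\ell(I)}$ agrees with $h_{I;\kappa}^{a}$ outside an $\eta$-halo of the skeleton because of the moment vanishing (\ref{van pos}), this yields $\Vert S_{\kappa,\eta}^{\mathcal{G}}-\mathbb{I}\Vert_{\operatorname*{op}}=O(\eta)$, and invertibility plus the frame and square-function estimates follow by Neumann series and transference from the classical Alpert theory, exactly as you propose. One small correction: on the halo the difference $(\bigtriangleup_{I;\kappa}f)\ast\phi_{\eta\ell(I)}-\bigtriangleup_{I;\kappa}f$ is not pointwise of size $\eta^{\kappa}$ (the piecewise polynomial has an $O(1)$ jump across the skeleton, so the difference there is comparable to $\Vert\bigtriangleup_{I;\kappa}f\Vert_{\infty}$); the positive power of $\eta$ comes solely from the $O(\eta\,|I|)$ measure of the halo, which you also invoke, so the argument is unaffected.
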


\begin{notation}
\label{Notation Alpert} We will often drop the index $a$ parameterized by the
finite set $\Gamma_{d}$ as it plays no essential role in most of what follows,
and it will be understood that when we write
\[
\bigtriangleup_{Q;\kappa}^{\eta}f=\left\langle \left(  S_{\eta}^{\mathcal{G}%
}\right)  ^{-1}f,h_{Q;\kappa}\right\rangle h_{Q;\kappa}^{\eta}=\breve
{f}\left(  Q\right)  h_{Q;\kappa}^{\eta},
\]
we \emph{actually} mean the Alpert \emph{pseudoprojection},%
\[
\bigtriangleup_{Q;\kappa}^{\eta}f=\sum_{a\in\Gamma_{d}}\left\langle \left(
S_{\eta}^{\mathcal{G}}\right)  ^{-1}f,h_{Q;\kappa}^{a}\right\rangle
h_{Q;\kappa}^{\eta,a}=\sum_{a\in\Gamma_{d}}\breve{f}_{a}\left(  Q\right)
h_{Q;\kappa}^{a,\eta}\ ,
\]
where $\breve{f}_{a}\left(  Q\right)  $ is a convenient abbreviation for the
inner product $\left\langle \left(  S_{\eta}^{\mathcal{G}}\right)
^{-1}f,h_{Q;\kappa}^{a}\right\rangle $ when $\kappa$ is understood. More
precisely, one can view $\breve{f}\left(  Q\right)  =\left\{  \breve{f}%
_{a}\left(  Q\right)  \right\}  _{a\in\Gamma_{d}}$ and $h_{Q;\kappa}^{\eta
}=\left\{  h_{Q;\kappa}^{a,\eta}\right\}  _{a\in\Gamma_{d}}$ as sequences of
numbers and functions indexed by $\Gamma_{d}$, in which case $\breve{f}\left(
Q\right)  h_{Q;\kappa}^{\eta}$ is the dot product of these two finite sequences.
\end{notation}

We will also need the following extension of this theorem from \cite[Section
2]{Saw8}.

\begin{notation}
In the theorem below, we have \emph{redefined} $\bigtriangleup_{I;\kappa
}^{\eta}f$ to be $\sum_{a\in\Gamma_{d}}\left\langle \left(  T_{\eta
}^{\mathcal{G}}\right)  ^{-1}f,h_{I;\kappa}^{a,\eta}\right\rangle
\ h_{I;\kappa}^{a,\eta}$, rather than $\sum_{a\in\Gamma_{d}}\left\langle
\left(  S_{\eta}^{\mathcal{G}}\right)  ^{-1}f,h_{I;\kappa}^{a}\right\rangle
\ h_{I;\kappa}^{a,\eta}$. Thus the Alpert wavelet inside the inner product is
now smooth at the expense of replacing the bounded invertible operator
$S_{\eta}^{\mathcal{G}}$ with the self-adjoint invertible operator $T_{\eta
}^{\mathcal{G}}=S_{\eta}^{\mathcal{G}}\left(  S_{\eta}^{\mathcal{G}}\right)
^{\ast}$.
\end{notation}

\begin{theorem}
\label{reproducing'}Let $d\geq2$ and $\kappa\in\mathbb{N}$ with $\kappa
>\frac{d}{2}$. Define $T_{\eta}^{\mathcal{G}}=S_{\eta}^{\mathcal{G}}\left(
S_{\eta}^{\mathcal{G}}\right)  ^{\ast}$ for $0<\eta<\eta_{0}$, where $S_{\eta
}^{\mathcal{G}}$ and $\eta_{0}$ are as in Theorem \ref{reproducing}. Then for
all $0<\eta<\eta_{0}$, and for all grids $\mathcal{G}$ in $\mathbb{R}^{d} $,
and all $1<p<\infty$, there is a positive constant $C_{p,d,\eta}$ such that
the collection of functions $\left\{  h_{I;\kappa}^{a,\eta}\right\}
_{I\in\mathcal{G},\ a\in\Gamma_{d}}$ is a $C_{p,d,\eta}$-frame for $L^{p}$, by
which we mean,%
\begin{align}
f\left(  x\right)   &  =\sum_{I\in\mathcal{G}}\bigtriangleup_{I;\kappa}^{\eta
}f\left(  x\right)  ,\ \ \ \ \ \text{for all }f\in L^{p}%
,\label{bounded below'}\\
\text{where }\bigtriangleup_{I;\kappa}^{\eta}f  &  \equiv\sum_{a\in\Gamma_{d}%
}\left\langle \left(  T_{\eta}^{\mathcal{G}}\right)  ^{-1}f,h_{I;\kappa
}^{a,\eta}\right\rangle \ h_{I;\kappa}^{a,\eta}\ ,\nonumber
\end{align}
and with convergence of the sum in both the $L^{p}$ norm and almost
everywhere, and%
\begin{equation}
\frac{1}{C_{p,d,\eta}}\left\Vert f\right\Vert _{L^{p}}\leq\left\Vert \left(
\sum_{I\in\mathcal{G}}\left\vert \bigtriangleup_{I;\kappa}^{\eta}f\right\vert
^{2}\right)  ^{\frac{1}{2}}\right\Vert _{L^{p}}\leq C_{p,d,\eta}\left\Vert
f\right\Vert _{L^{p}},\ \ \ \ \ \text{for all }f\in L^{p}.\label{square est'}%
\end{equation}
Moreover, the smooth Alpert wavelets $\left\{  h_{I;\kappa}^{a,\eta}\right\}
_{I\in\mathcal{G},\ a\in\Gamma_{d}}$ are translation and dilation invariant in
the sense that $h_{I;\kappa}^{a,\eta}$ is a translate and dilate of the mother
Alpert wavelet $h_{I_{0};\kappa}^{a,\eta}$ where $I_{0}$ is the unit cube
centered at the origin in $\mathbb{R}^{d}$.
\end{theorem}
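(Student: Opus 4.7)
The plan is to reduce Theorem \ref{reproducing'} to the already established Theorem \ref{reproducing} by observing that the new pseudoprojection is algebraically identical to the old one, and then verify the extra translation-and-dilation invariance assertion directly from the construction.

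First, I would exploit the factorization $T_\eta^{\mathcal{G}}=S_\eta^{\mathcal{G}}(S_\eta^{\mathcal{G}})^{\ast}$, which gives $(T_\eta^{\mathcal{G}})^{-1}=((S_\eta^{\mathcal{G}})^{\ast})^{-1}(S_\eta^{\mathcal{G}})^{-1}$. Pairing with a smooth Alpert wavelet via the $L^{2}$ inner product and moving one factor onto the second slot yields
\[
\left\langle (T_\eta^{\mathcal{G}})^{-1} f,\,h_{I;\kappa}^{a,\eta}\right\rangle
=\left\langle (S_\eta^{\mathcal{G}})^{-1} f,\,(S_\eta^{\mathcal{G}})^{-1} h_{I;\kappa}^{a,\eta}\right\rangle
=\left\langle (S_\eta^{\mathcal{G}})^{-1} f,\,h_{I;\kappa}^{a}\right\rangle,
\]
where the last equality uses the defining property $S_\eta^{\mathcal{G}} h_{I;\kappa}^{a}=h_{I;\kappa}^{a,\eta}$ from Theorem \ref{reproducing}. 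Thus the new $\bigtriangleup_{I;\kappa}^{\eta}f$ coincides pointwise with the old one, so the reproducing formula (\ref{bounded below'}) and the square function equivalence (\ref{square est'}) follow at once from (\ref{bounded below}) and (\ref{square est}).

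Before this reduction can be quoted, however, I must verify that $T_\eta^{\mathcal{G}}$ is a bounded invertible (and, at the $L^{2}$ level, self-adjoint) operator on $L^{p}$ for every $1<p<\infty$. Self-adjointness on $L^{2}$ is immediate from the factorization. For boundedness and invertibility on $L^{p}$, I would use that, for $\eta$ below the threshold $\eta_0$ of Theorem \ref{reproducing}, the operator $S_\eta^{\mathcal{G}}$ is a small perturbation of the identity, since $h_{I;\kappa}^{a,\eta}$ and $h_{I;\kappa}^{a}$ differ only on a thin $\eta$-halo of the skeleton of $I$. The same near-identity character transfers to $(S_\eta^{\mathcal{G}})^{\ast}$ by inspecting its kernel, and hence the composition $T_\eta^{\mathcal{G}}$ inherits bounded invertibility on $L^{p}$ for all $1<p<\infty$; this is the main technical obstacle, and I expect to handle it exactly as the analogous step in \cite{Saw7}.

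For the final translation-and-dilation assertion, the mother Alpert wavelet $h_{I_{0};\kappa}^{a}$ is by construction a translate-dilate template whose $I$-analogues are $h_{I;\kappa}^{a}=\operatorname{dil}_{\ell(I)}\operatorname{tr}_{c_{I}}h_{I_{0};\kappa}^{a}$. Since the approximate identity $\phi_{\eta\ell(I)}(x)=(\eta\ell(I))^{-d}\phi(x/\eta\ell(I))$ is itself a dilate of $\phi_{\eta}$, and convolution commutes with translation and scales correctly under dilation, the identity
\[
h_{I;\kappa}^{a,\eta}=h_{I;\kappa}^{a}\ast\phi_{\eta\ell(I)}
=\operatorname{dil}_{\ell(I)}\operatorname{tr}_{c_{I}}\!\left(h_{I_{0};\kappa}^{a}\ast\phi_{\eta}\right)
=\operatorname{dil}_{\ell(I)}\operatorname{tr}_{c_{I}}h_{I_{0};\kappa}^{a,\eta}
\]
follows directly, establishing that every $h_{I;\kappa}^{a,\eta}$ is a translate and dilate of $h_{I_{0};\kappa}^{a,\eta}$, as claimed.
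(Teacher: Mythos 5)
Your proposal is correct and uses essentially the argument the paper itself relies on: the duality computation $\langle (T_{\eta}^{\mathcal{G}})^{-1}f,h_{I;\kappa}^{a,\eta}\rangle=\langle (S_{\eta}^{\mathcal{G}})^{-1}f,(S_{\eta}^{\mathcal{G}})^{-1}h_{I;\kappa}^{a,\eta}\rangle=\langle (S_{\eta}^{\mathcal{G}})^{-1}f,h_{I;\kappa}^{a}\rangle$ is precisely the identity the paper records as (\ref{id}) and exploits in Lemma \ref{Q is proj}, and it does show that the redefined pseudoprojection coincides with the one in Theorem \ref{reproducing}, so that (\ref{bounded below'}) and (\ref{square est'}) follow at once. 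The remaining points you flag — bounded invertibility of $T_{\eta}^{\mathcal{G}}=S_{\eta}^{\mathcal{G}}(S_{\eta}^{\mathcal{G}})^{\ast}$ on $L^{p}$ via the $O(\eta)$ perturbation estimates from \cite{Saw7}, and the translation--dilation covariance of $h_{I;\kappa}^{a}\ast\phi_{\eta\ell(I)}$ — are handled exactly as in the paper's sources.
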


\begin{description}
\item[Convention] For the remainder of this paper, we assume that this doubly
smooth decomposition is in force whenever we write pseudoprojections
$\mathsf{Q}_{s}^{\eta}f$.
\end{description}

Despite being called pseudoprojections in \cite{Saw8}, it turns out that the
operators $\mathsf{Q}_{s,U}^{\eta}$ are actually orthogonal projections. The
key to proving this is the identity%
\begin{align}
\left\langle T_{\mathcal{G}}^{-1}h_{I;\kappa}^{\eta},h_{J;\kappa}^{\eta
}\right\rangle  &  =\left\langle \left(  S^{-1}\right)  ^{\ast}S^{-1}%
h_{I;\kappa}^{\eta},h_{J;\kappa}^{\eta}\right\rangle =\left\langle
S^{-1}h_{I;\kappa}^{\eta},S^{-1}h_{J;\kappa}^{\eta}\right\rangle ,\label{id}\\
&  =\left\langle h_{I;\kappa},h_{J;\kappa}\right\rangle =\delta_{I}%
^{J}\ ,\ \ \ \ \ \text{for }I,J\in\mathcal{G},\nonumber
\end{align}
where we have written $T_{\mathcal{G}}$ in place of $T_{\eta}^{\mathcal{G}}$.

\begin{lemma}
\label{Q is proj}For each grid $\mathcal{G}$, the collection of operators
$\left\{  \bigtriangleup_{I;\kappa}^{\eta}\right\}  _{I\in\mathcal{G}}$ form a
family of orthogonal projections on $L^{2}\left(  \mathbb{R}^{d}\right)  $,
i.e.%
\[
\bigtriangleup_{I;\kappa}^{\eta}\bigtriangleup_{J;\kappa}^{\eta}=\left\{
\begin{array}
[c]{ccc}%
\bigtriangleup_{I;\kappa}^{\eta} & \text{ if } & I,J\in\mathcal{G}\text{ and
}I=J\\
0 & \text{ if } & I,J\in\mathcal{G}\text{ and }I\not =J,
\end{array}
\right.  .
\]

\end{lemma}

\begin{proof}
For $f\in L^{2}\left(  \mathbb{R}^{d}\right)  $ we have%
\begin{align*}
\bigtriangleup_{I;\kappa}^{\eta}\bigtriangleup_{J;\kappa}^{\eta}f &
=\bigtriangleup_{I;\kappa}^{\eta}\left\langle T_{\mathcal{G}}^{-1}%
f,h_{J;\kappa}^{\eta}\right\rangle h_{J;\kappa}^{\eta}=\left\langle
T_{\mathcal{G}}^{-1}f,h_{J;\kappa}^{\eta}\right\rangle \bigtriangleup
_{I;\kappa}^{\eta}h_{J;\kappa}^{\eta}\\
&  =\left\langle T_{\mathcal{G}}^{-1}f,h_{J;\kappa}^{\eta}\right\rangle
\left\langle T_{\mathcal{G}}^{-1}h_{J;\kappa}^{\eta},h_{I;\kappa}^{\eta
}\right\rangle h_{I;\kappa}^{\eta}=\delta_{J}^{I}\left\langle T_{\mathcal{G}%
}^{-1}f,h_{J;\kappa}^{\eta}\right\rangle h_{I;\kappa}^{\eta},
\end{align*}
where the final equality follows from (\ref{id}).
\end{proof}

\subsubsection{Pointwise and discrete multipliers\label{subsub point}}

Given $\psi\in C_{c}^{\infty}\left(  \mathbb{R}^{d}\right)  $, define the
\emph{pointwise} multiplier $M_{\psi}$ on $L^{2}\left(  \mathbb{R}^{d}\right)
$ by%
\[
\left(  M_{\psi}f\right)  \left(  x\right)  \equiv\psi\left(  x\right)
f\left(  x\right)  ,\ \ \ \ \ \text{for }x\in\mathbb{R}^{d}.
\]
Given a grid $\mathcal{G}$, define the \emph{discrete} multiplier $M_{\psi
}^{\mathcal{G}}$ on $L^{2}\left(  \mathbb{R}^{d}\right)  $ by%
\begin{equation}
\left(  M_{\psi}^{\mathcal{G}}f\right)  \left(  x\right)  \equiv\sum
_{I\in\mathcal{G}}\psi\left(  b_{I}\right)  \left\langle T_{\mathcal{G}}%
^{-1}f,h_{I;\kappa}^{\eta}\right\rangle h_{I;\kappa}^{\eta}%
\ ,\label{def disc mult}%
\end{equation}
where $f=\sum_{I\in\mathcal{G}}\left\langle T_{\mathcal{G}}^{-1}f,h_{I;\kappa
}^{\eta}\right\rangle h_{I;\kappa}^{\eta}$ is the smooth $\mathcal{G}$ -Alpert
expansion of $f$, and $b_{I}$ is the bottom left vertex of $I$. Finally, given
a grid $\mathcal{G}$, define the \emph{perturbed} multiplier $\widetilde
{M}_{\psi}^{\mathcal{G}}$ on $L^{2}\left(  \mathbb{R}^{d}\right)  $ by%
\[
\left(  \widetilde{M}_{\psi}^{\mathcal{G}}f\right)  \left(  x\right)
\equiv\left(  M_{\psi}^{\mathcal{G}}T_{\mathcal{G}}f\right)  \left(  x\right)
=\sum_{I\in\mathcal{G}}\psi\left(  b_{I}\right)  \left\langle T_{\mathcal{G}%
}^{-1}T_{\mathcal{G}}f,h_{I;\kappa}^{\eta}\right\rangle h_{I;\kappa}^{\eta
}=\sum_{I\in\mathcal{G}}\psi\left(  b_{I}\right)  \left\langle f,h_{I;\kappa
}^{\eta}\right\rangle h_{I;\kappa}^{\eta}\ .
\]
The first two multiplier operators can be compared by writing%
\begin{align*}
M_{\psi}f\left(  x\right)   & =\sum_{I\in\mathcal{G}}\psi\left(  x\right)
\left\langle T_{\mathcal{G}}^{-1}f,h_{I;\kappa}^{\eta}\right\rangle
h_{I;\kappa}^{\eta}\left(  x\right)  \ \ \ \ \ \text{for }x\in\mathbb{R}%
^{d},\\
M_{\psi}^{\mathcal{G}}f\left(  x\right)   & =\sum_{I\in\mathcal{G}}\psi\left(
b_{I}\right)  \left\langle T_{\mathcal{G}}^{-1}f,h_{I;\kappa}^{\eta
}\right\rangle h_{I;\kappa}^{\eta}\left(  x\right)  \ \ \ \ \ \text{for }%
x\in\mathbb{R}^{d},
\end{align*}
where the discrete multiplier differs from the pointwise multiplier only in
that it \emph{selects} a particular point $b_{I}$ in the support of
$h_{I;\kappa}^{\eta}$ at which to evaluate the multiplier $\psi$.

Define the lattice
\begin{equation}
\Gamma_{s}^{\infty}\equiv2^{-s}\mathbb{Z}^{d-1}.\label{def Gamma_s}%
\end{equation}
Now let $\mathcal{G}=\mathcal{D}+v$ where $\mathcal{D}$ is the standard grid
on $\mathbb{R}^{d}$, and suppose $I\in\mathcal{G}_{s}$ satisfies $I=\tau
_{n+v}I_{s}$ for (a uniquely determined) $n=n\left(  I\right)  \in\Gamma_{s}$,
and where $I_{s}\in\mathcal{D}_{s}$ is the unique cube with bottom left vertex
at the origin. Then if we define $h_{s;\kappa}^{\eta}\equiv h_{I_{s};\kappa
}^{\eta}$ for notational convenience, and use the identity%
\[
\left\langle T_{\mathcal{G}}^{-1}h_{J;\kappa}^{\eta},h_{I;\kappa}^{\eta
}\right\rangle =\left\langle S_{\mathcal{G}}^{-1}h_{J;\kappa}^{\eta
},S_{\mathcal{G}}^{-1}h_{I;\kappa}^{\eta}\right\rangle =\left\langle
h_{J;\kappa},h_{I;\kappa}\right\rangle =\delta_{I}^{J}\ ,
\]
we have%
\[
M_{\psi}^{\mathcal{G}}h_{J;\kappa}^{\eta}=\sum_{I\in\mathcal{G}}\psi\left(
b_{I}\right)  \left\langle T_{\mathcal{G}}^{-1}h_{J;\kappa}^{\eta}%
,h_{I;\kappa}^{\eta}\right\rangle h_{I;\kappa}^{\eta}=\psi\left(
b_{I}\right)  h_{I;\kappa}^{\eta}=\psi\left(  n+v\right)  \tau_{n+v}%
h_{s;\kappa}^{\eta}\ ,\ \ \ \ \ \text{\ for }I\in\mathcal{G}_{s},
\]
and so also the following useful formula,%
\begin{align}
M_{\psi}^{\mathcal{G}}\mathsf{Q}_{s}^{\eta,\mathcal{G}}f  & =M_{\psi
}^{\mathcal{G}}\left(  \sum_{I\in\mathcal{G}_{s}}\left\langle T_{\mathcal{G}%
}^{-1}f,h_{I;\kappa}^{\eta}\right\rangle h_{I;\kappa}^{\eta}\right)
=\sum_{I\in\mathcal{G}_{s}}\left\langle T_{\mathcal{G}}^{-1}f,h_{I;\kappa
}^{\eta}\right\rangle M_{\psi}^{\mathcal{G}}h_{I;\kappa}^{\eta}%
\label{useful form}\\
& =\sum_{n\in\Gamma_{s}}\left\langle T_{\mathcal{G}}^{-1}f,\tau_{n+v}%
h_{s;\kappa}^{\eta}\right\rangle \psi\left(  n+v\right)  \tau_{n+v}%
h_{s;\kappa}^{\eta}.\nonumber
\end{align}

\subsection{Localization estimates for $SS^{\ast}$}

First we recall a result from \cite[Lemma 19]{Saw7}. For $K\in\mathcal{G}$,
define the collection of Carleson cubes of the skeleton $\operatorname*{Skel}%
\left(  K\right)  \equiv\bigcup_{K\in\mathfrak{C}_{\mathcal{G}}\left(
J\right)  }\partial K$ of $K$ by%
\[
\operatorname*{Car}\left(  K\right)  \equiv\left\{  L\in\mathcal{G}%
:\ell\left(  L\right)  \leq\ell\left(  K\right)  \text{ and }L\cap
\operatorname*{Skel}\left(  K\right)  \neq\emptyset\right\}  .
\]
Define the $\eta$-halo of the skeleton $\operatorname*{Skel}\left(  K\right)
=\bigcup_{K\in\mathfrak{C}_{\mathcal{G}}\left(  J\right)  }\partial K$ of the
square $J$ by
\[
\mathcal{H}_{\eta}\left(  J\right)  \equiv\bigcup_{K\in\mathfrak{C}%
_{\mathcal{G}}\left(  J\right)  }\left\{  \left(  1+\eta\right)
K\setminus\left(  1-\eta\right)  K\right\}  ,
\]
where $\mathfrak{C}_{\mathcal{G}}\left(  J\right)  $ denotes the set of
$2^{n}$ dyadic children of $J$.

\begin{lemma}
[{\cite[Lemma 19]{Saw7}}]\label{inner est}Suppose $\kappa\in\mathbb{N}$ with
$\kappa>\frac{n}{2}$, $0<\eta=2^{-k}<1$, and $I,J\in\mathcal{G}$, where
$\mathcal{G}$ is a grid in $\mathbb{R}^{n}$. Then we have%
\begin{align*}
\left\vert \left\langle h_{J;\kappa},h_{J;\kappa}^{\eta}\right\rangle
\right\vert  & \approx1\text{ and }\left\vert \left\langle h_{J;\kappa}^{\eta
},h_{J^{\prime};\kappa}\right\rangle \right\vert \lesssim\eta
,\ \ \ \ \ \text{for }J\text{ and }J^{\prime}\text{ siblings},\\
\left\vert \left\langle h_{J;\kappa},h_{I;\kappa}^{\eta}\right\rangle
\right\vert  & \lesssim\eta\left(  \frac{\ell\left(  I\right)  }{\ell\left(
J\right)  }\right)  ^{\frac{n}{2}},\ \ \ \ \ \text{for }I\in
\operatorname*{Car}\left(  J\right)  ,\\
\left\vert \left\langle h_{J;\kappa},h_{I;\kappa}^{\eta}\right\rangle
\right\vert  & \lesssim\eta\left(  \frac{\ell\left(  J\right)  }{\ell\left(
I\right)  }\right)  ^{\frac{n}{2}-1},\ \ \ \ \ \text{for }J\in
\operatorname*{Car}\left(  I\right)  \text{ and }\ell\left(  J\right)
\geq\eta\ell\left(  I\right)  ,\\
\left\vert \left\langle h_{J;\kappa},h_{I;\kappa}^{\eta}\right\rangle
\right\vert  & \lesssim\frac{1}{\eta^{\kappa}}\left(  \frac{\ell\left(
J\right)  }{\ell\left(  I\right)  }\right)  ^{\kappa+\frac{n}{2}%
},\ \ \ \ \ \text{for }\ell\left(  J\right)  \leq\eta\ell\left(  I\right)
\text{ and }J\cap\mathcal{H}_{\eta}\left(  I\right)  \neq\emptyset,\\
\left\langle h_{J;\kappa},h_{I;\kappa}^{\eta}\right\rangle  &
=0,\ \ \ \ \ \text{in all other cases}.
\end{align*}

\end{lemma}

We will use the graph distance $\delta_{\operatorname*{graph}}$ in
$\mathcal{G}$ as in \cite{Saw8}, and we also adapt some of the arguments found
there. Viewing $\mathcal{G}$ with its \emph{tree} structure, where an edge is
placed between each cube $I$ and its parent $\pi_{\mathcal{G}}I$, we define
the `tree distance' $\delta_{\operatorname*{tree}}\left(  J,I\right)  $
between two dyadic cubes in $\mathcal{G}$ to be the length of the (unique)
shortest path joining $I$ and $J$ in the tree $\mathcal{G}$. Viewing
$\mathcal{G}$ instead with its \emph{graph} structure, where in addition to
the edges in the tree structure, edges are also placed between adjacent cubes
in $\mathcal{G}$ having the same sidelength, we define the `graph distance'
$\delta_{\operatorname*{graph}}\left(  J,I\right)  $ between two dyadic cubes
in $\mathcal{G}$ to be the length of a shortest path joining $I$ and $J$ in
the tree $\mathcal{G}$ (shortest paths are not always unique in this graph structure).

\begin{notation}
From now on we will write $T_{\mathcal{G}}^{\eta}$ or $T_{\mathcal{G}}$ or
simply $T$ to denote the operator $T_{\eta}^{\mathcal{G}}=S_{\eta
}^{\mathcal{G}}\left(  S_{\eta}^{\mathcal{G}}\right)  ^{\ast}$ in Theorem
\ref{reproducing'} above when $\eta$ and $\mathcal{G}$ are understood from context.
\end{notation}

We now note that both%
\begin{align*}
\left\vert \left\langle Sh_{J;\kappa},h_{I;\kappa}\right\rangle \right\vert  &
=\left\vert \left\langle h_{J;\kappa}^{\eta},h_{I;\kappa}\right\rangle
\right\vert \leq2^{C-\kappa\delta_{\operatorname*{graph}}\left(  I,J\right)
},\\
\left\vert \left\langle S^{\ast}h_{J;\kappa},h_{I;\kappa}\right\rangle
\right\vert  & =\left\vert \left\langle h_{J;\kappa},h_{I;\kappa}^{\eta
}\right\rangle \right\vert \leq2^{C-\kappa\delta_{\operatorname*{graph}%
}\left(  I,J\right)  }.
\end{align*}
Then using
\begin{align*}
S^{\ast}h_{J;\kappa}  & =\sum_{I\in\mathcal{G}}\left\langle S^{\ast
}h_{J;\kappa},h_{I;\kappa}\right\rangle h_{I;\kappa}=\left\langle S^{\ast
}h_{J;\kappa},h_{J;\kappa}\right\rangle h_{J;\kappa}+\sum_{I\in\mathcal{G}%
:\ I\neq J}\left\langle S^{\ast}h_{J;\kappa},h_{I;\kappa}\right\rangle
h_{I;\kappa}\\
& =a_{\eta}h_{J;\kappa}+\sum_{I\in\mathcal{G}:\ I\neq J}\left\langle
h_{J;\kappa},h_{I;\kappa}^{\eta}\right\rangle h_{I;\kappa}\ ,
\end{align*}
we obtain%
\begin{align*}
\left\langle Th_{J;\kappa},h_{I;\kappa}\right\rangle  & =\left\langle S\left(
S^{\ast}h_{J;\kappa}\right)  ,h_{I;\kappa}\right\rangle =\left\langle S\left(
a_{\eta}h_{J;\kappa}+\sum_{K\in\mathcal{G}:\ K\neq J}\left\langle h_{J;\kappa
},h_{K;\kappa}^{\eta}\right\rangle h_{K;\kappa}\right)  ,h_{I;\kappa
}\right\rangle \\
& =\left\langle S\left(  a_{\eta}h_{J;\kappa}\right)  ,h_{I;\kappa
}\right\rangle +\sum_{K\in\mathcal{G}:\ K\neq J}\left\langle S\left(
\left\langle h_{J;\kappa},h_{K;\kappa}^{\eta}\right\rangle h_{K;\kappa
}\right)  ,h_{I;\kappa}\right\rangle \\
& =a_{\eta}\left\langle h_{J;\kappa}^{\eta},h_{I;\kappa}\right\rangle
+\sum_{K\in\mathcal{G}:\ K\neq J}\left\langle h_{J;\kappa},h_{K;\kappa}^{\eta
}\right\rangle \left\langle h_{K;\kappa}^{\eta},h_{I;\kappa}\right\rangle .
\end{align*}

Thus in the special case where there is $n\in\mathbb{N}$ such that
\begin{equation}
\ell\left(  I\right)  \leq1\ \text{and }\operatorname*{dist}\left(
J,I\right)  >2^{n}\text{ and }\ell\left(  J\right)  \leq2^{n},\label{special}%
\end{equation}
we must have $\ell\left(  K\right)  \geq2^{n}$, and we have the\ following
estimate from the fourth line in Lemma \ref{inner est},
\[
\left\vert \left\langle h_{I;\kappa},h_{K;\kappa}^{\eta}\right\rangle
\right\vert \lesssim\frac{1}{\eta^{\kappa}}\left(  \frac{\ell\left(  I\right)
}{\ell\left(  K\right)  }\right)  ^{\kappa+\frac{d-1}{2}},\ \ \ \ \ \text{for
}\ell\left(  I\right)  \leq\eta\ell\left(  K\right)  \text{ and }%
I\cap\mathcal{H}_{\eta}\left(  K\right)  \neq\emptyset.
\]
Altogether we obtain for $I$ and $J$ as in (\ref{special}) that%
\begin{align*}
& \left\vert \sum_{K\in\mathcal{G}:\ K\neq J}\left\langle h_{J;\kappa
},h_{K;\kappa}^{\eta}\right\rangle \left\langle h_{K;\kappa}^{\eta
},h_{I;\kappa}\right\rangle \right\vert \lesssim\sum_{K\in\mathcal{G}:\ K\neq
J}\left\vert \left\langle h_{J;\kappa},h_{K;\kappa}^{\eta}\right\rangle
\right\vert \left\vert \left\langle h_{K;\kappa}^{\eta},h_{I;\kappa
}\right\rangle \right\vert \\
& \lesssim\sum_{K\in\mathcal{G}:\ I\in\mathcal{H}^{\eta}\left(  K\right)
\text{ and }\ell\left(  K\right)  \geq2^{n}}\frac{1}{\eta^{\kappa}}\left(
\frac{\ell\left(  I\right)  }{\ell\left(  K\right)  }\right)  ^{\kappa
+\frac{d-1}{2}}=\sum_{m=n}^{\infty}C_{d-1}\frac{1}{\eta^{\kappa}}\left(
\frac{\ell\left(  I\right)  }{2^{m}}\right)  ^{\kappa+\frac{d-1}{2}}\leq
C_{d-1}\frac{1}{\eta^{\kappa}}\left(  \frac{\ell\left(  I\right)  }{2^{n}%
}\right)  ^{\kappa+\frac{d-1}{2}},
\end{align*}
and so%
\[
\left\vert \left\langle Th_{J;\kappa},h_{I;\kappa}\right\rangle \right\vert
\leq\left\vert a_{\eta}\left\langle h_{J;\kappa}^{\eta},h_{I;\kappa
}\right\rangle \right\vert +C_{d-1}\frac{1}{\eta^{\kappa}}\left(  \frac
{\ell\left(  I\right)  }{2^{n}}\right)  ^{\kappa+\frac{d-1}{2}}=C_{d-1}%
\frac{1}{\eta^{\kappa}}\left(  \frac{\ell\left(  I\right)  }{2^{n}}\right)
^{\kappa+\frac{d-1}{2}},
\]
since $\left\langle h_{J;\kappa}^{\eta},h_{I;\kappa}\right\rangle =0$. We
summarize this in the following lemma.

\begin{lemma}
\label{lem special}For $n\in\mathbb{N}$, and $I$ and $J$ as in (\ref{special}%
), we have%
\[
\left\vert \left\langle Th_{J;\kappa},h_{I;\kappa}\right\rangle \right\vert
\leq C_{d-1}\frac{1}{\eta^{\kappa}}\left(  \frac{\ell\left(  I\right)  }%
{2^{n}}\right)  ^{\kappa+\frac{d-1}{2}}.
\]

\end{lemma}

\subsection{Bilinear inequalities}

Here we restrict attention to the case when $\mathcal{E}=\mathcal{E}_{S}$
where $S$ is a smooth compact piece of the paraboloid $\mathbb{P}^{d-1}$. We
use the following definition from \cite{BoGu}.

\begin{definition}
For $q>\frac{2d}{d-1}$ and $R>1$ define%
\[
\mathfrak{N}_{R}^{\left(  q\right)  }\equiv\sup_{\left\Vert f\right\Vert
_{L^{\infty}\left(  U\right)  \leq1}}\left\Vert \mathcal{E}f\right\Vert
_{L^{q}\left(  A\left(  0,R\right)  \right)  },
\]
where $A\left(  0,R\right)  \equiv B\left(  0,R\right)  \setminus B\left(
0,\frac{1}{2}R\right)  $ is the annulus with radii $R$ and $\frac{1}{2}R$.
\end{definition}

The following theorem is well known.

\begin{theorem}
\label{main}The Fourier extension conjecture holds for a smooth compact piece
$S$ of the paraboloid $\mathbb{P}^{d-1}$ \emph{if and only if} for every
$q>\frac{2d}{d-1}$ and $\varepsilon>0$, there is a positive constant
$C_{q,\varepsilon}$ such that $\mathfrak{N}_{R}^{\left(  q\right)  }\leq
C_{q,\varepsilon}R^{\varepsilon}$ for all $R>1$.
\end{theorem}

\begin{proof}
We first note that we can replace the annulus $A\left(  0,R\right)  $ in the
definition of $\mathfrak{N}_{R}^{\left(  q\right)  }$ with the ball $B\left(
0,R\right)  $. Indeed, we use that $B\left(  0,R\right)  $ is a union of the
unit ball and at most $r$ annuli of the form $A\left(  0,2^{t}\right)
=B\left(  0,2^{t}\right)  \setminus B\left(  0,2^{t-1}\right)  $ where
$R=2^{r}$, in order to obtain,%
\[
\int_{B\left(  0,R\right)  }\left\vert \mathcal{E}f\left(  \xi\right)
\right\vert ^{q}d\xi=\sum_{t=1}^{r}\int_{A\left(  0,2^{t}\right)  }\left\vert
\mathcal{E}f\left(  \xi\right)  \right\vert ^{q}d\xi+\int_{B\left(
0,1\right)  }\left\vert \mathcal{E}f\left(  \xi\right)  \right\vert ^{q}%
d\xi\lesssim\sum_{t=1}^{r}\left(  C_{\varepsilon}2^{\varepsilon t}\left\Vert
f\right\Vert _{L^{\infty}}\right)  ^{q}+\left\Vert f\right\Vert _{L^{\infty}%
}^{q}\lesssim R^{\varepsilon q}\left\Vert f\right\Vert _{L^{\infty}}^{q}.
\]
Now we can use the well known fact that the Fourier extension conjecture
follows from this inequality by Nikishin-Maurey-Pisier factorization as in
\cite{Bou}, and Tao's $\varepsilon$-removal theorem in \cite{Tao1}.
\end{proof}

In order to bound $\mathfrak{N}_{R}^{\left(  q\right)  }$ we recall a variant
in \cite{RiSa} of an inequality of Bourgain and Guth \cite[Theorem 1$^{\prime
}$]{BoGu} that was proved using their pigeonholing technique.

\begin{definition}
For $d\geq3$ and $\nu>0$ define%
\begin{equation}
\Gamma_{\nu}\left[  U\right]  \equiv\left\{  \left(  U_{1},U_{2}\right)
:\ell\left(  U_{1}\right)  ,\ell\left(  U_{1}\right)  ,\operatorname*{dist}%
\left(  U_{1},U_{2}\right)  \geq\nu\right\}  .\label{weak sep var}%
\end{equation}

\end{definition}

The following theorem is due to Tao, Vargas and Vega \cite{TaVaVe}. In
\cite{RiSa} we gave a proof (in the trilinear setting) using the pigeonholing
technique of Bourgan and Guth \cite{BoGu}, but unknown to the authors at that
time, this pigeonholing proof had essentially appeared in C. Demeter's book
\cite[see Proposition 7.20]{Dem}.

\begin{theorem}
[Theorem 6 in \cite{RiSa}]\label{Thm 6 copy(1)}Let $d\geq3$, $q>\frac{2d}%
{d-1}$ and $R>1$. Then there is a positive constant $\nu=\nu\left(
q,d\right)  $ such that for every $\varepsilon>0$, there is another positive
constant $C_{\varepsilon,q}$ such that,%
\begin{equation}
\mathfrak{N}_{R}^{\left(  q\right)  }\leq C_{\varepsilon,q}R^{\varepsilon}%
\sup_{\left\Vert f\right\Vert _{L^{\infty}\left(  U\right)  \leq1}}%
\sup_{\left(  U_{1},U_{2}\right)  \in\Gamma_{\nu}\left[  U\right]  }\left\Vert
\left(  \mathcal{E}\mathbf{1}_{U_{1}}f\right)  \left(  \mathcal{E}%
\mathbf{1}_{U_{2}}f\right)  \right\Vert _{L^{\frac{q}{2}}\left(  A\left(
0,R\right)  \right)  }^{\frac{1}{2}}.\label{frak N_R}%
\end{equation}

\end{theorem}

\begin{proof}
The reader can easily check that the proof of this inequality is contained in
the proof of Theorem 6 in \cite{RiSa}, where a trilinear version was proved in
dimension $d=3$. Indeed, the only obstacle to extending the three-dimensional
proof in \cite{RiSa} to higher dimensions lies in the fact that if we are
considering a disjoint $N$-linear bound in $L^{\frac{q}{N}}$ for $q>\frac
{2d}{d-1}$, the Bourgain Guth pigeonholing argument used in \cite{RiSa}
requires $\frac{q}{N}>\frac{1}{N}\frac{2d}{d-1}\geq1$. Indeed, in the display%
\begin{align*}
& \left\vert Tf\left(  \xi\right)  \right\vert ^{q}\lesssim\left\vert
Tf\left(  \xi\right)  \right\vert ^{3}\leq\left(  2^{\left(  6+3\alpha\right)
\lambda}w_{I_{0}^{a}}^{a}w_{J_{0}^{a}}^{a}w_{K_{0}^{a}}^{a}\right)  ^{\frac
{q}{3}}\\
& \approx2^{\left(  6+3\alpha\right)  \lambda\frac{q}{3}}\left(
\int_{\mathbb{R}^{3}}\left\vert T_{I_{0}^{a}}f\left(  z_{1}\right)
\right\vert \zeta_{\lambda}\left(  z_{1}-a\right)  dz_{1}\right)  ^{\frac
{q}{3}}\left(  \int_{\mathbb{R}^{3}}\left\vert T_{J_{0}^{a}}f\left(
z_{2}\right)  \right\vert \zeta_{\lambda}\left(  z_{2}-a\right)
dz_{2}\right)  ^{\frac{q}{3}}\left(  \int_{\mathbb{R}^{3}}\left\vert
T_{K_{0}^{a}}f\left(  z_{3}\right)  \right\vert \zeta_{\lambda}\left(
z_{3}-a\right)  dz_{3}\right)  ^{\frac{q}{3}}\\
& \approx2^{\left(  6+3\alpha\right)  \lambda\frac{q}{3}}\int_{\mathbb{R}^{3}%
}\int_{\mathbb{R}^{3}}\int_{\mathbb{R}^{3}}\left\vert T_{I_{0}^{a}}f\left(
\xi-z_{1}\right)  T_{J_{0}^{a}}f\left(  \xi-z_{2}\right)  T_{K_{0}^{a}%
}f\left(  \xi-z_{3}\right)  \right\vert ^{\frac{q}{3}}\left(  \zeta_{s}\left(
z_{1}\right)  \zeta_{s}\left(  z_{2}\right)  \zeta_{s}\left(  z_{3}\right)
\right)  ^{\frac{q}{3}}dz_{1}dz_{2}dz_{3}\ .
\end{align*}
near the beginning of \textbf{Case 1} of the proof of Theorem 6 in
\cite{RiSa}, we use that $\frac{q}{3}\geq1$ in order to apply H\"{o}lder's
inequality to each integral in the second line. However, if the dimension
$d\geq4$, and we replace the trilinear inequality with an $N$-linear
inequality, then we would need $N\leq\frac{2d}{d-1}<3$, i.e. a bilinear
characterization. In fact, a straightforward generalization of the argument in
\cite{RiSa} shows that the bilinear characterization holds in all dimensions
$d\geq2$.
\end{proof}

We can trivially replace the indicator cutoff functions $\mathbf{1}_{U_{1}}$
and $\mathbf{1}_{U_{2}}$ with smooth cutoff functions $\psi_{1}$ and $\psi
_{2}$ adapted to $U_{1}$ and $U_{2}$ respectively in (\ref{frak N_R}), in
order to replace (\ref{frak N_R}) by
\begin{align}
\mathfrak{N}_{R}^{\left(  q\right)  }  & \leq C_{\varepsilon,q}R^{\varepsilon
}\sup_{\left\Vert f\right\Vert _{L^{\infty}\left(  U\right)  \leq1}}%
\sup_{\left(  \operatorname*{Supp}\psi_{1},\operatorname*{Supp}\psi
_{2}\right)  \in\Gamma_{\nu}\left[  U\right]  }\left\Vert \left(
\mathcal{E}M_{\psi_{1}}f\right)  \left(  \mathcal{E}M_{\psi_{2}}f\right)
\right\Vert _{L^{\frac{q}{2}}\left(  A\left(  0,R\right)  \right)  }^{\frac
{1}{2}}\label{important}\\
& \leq C_{\varepsilon,q}R^{\varepsilon}\otimes_{2}\Lambda_{\delta}\left(
R\right)  ,\nonumber
\end{align}
in Theorem \ref{Thm 6 copy(1)}.

\begin{remark}
\label{think}Using parabolic rescaling (\ref{dil''}) with $t>1$ as in
\cite{TaVaVe}, we may assume that $\nu=\nu\left(  q\right)  $ in
(\ref{important}) is a fixed small constant independent of $q>\frac{2d}{d-1}$,
and for convenience we can think of $\nu$ as being equal to $1$ for the
remainder of the paper.
\end{remark}

\begin{remark}
We also mention that in a \emph{transverse} variant of the multilinear
analogue of (\ref{important}), in which any $d$-tuple of normals from each of
the $d$ patches\ are assumed to span $\mathbb{R}^{d}$, that was introduced by
Bennett, Carbery and Tao in \cite{BeCaTa}, they proved the remarkable result
that both the \emph{transverse} multilinear Fourier extension inequality and
the \emph{transverse} multilinear Kakeya inequality hold. They first proved
the transverse multilinear Kakeya inequality using a beautiful probabilistic
argument applied to sliding Gaussian tubes, and then derived the transverse
multilinear Fourier extension inequality using a clever bootstrapping argument
involving the transverse multilinear Kakeya inequality. However, the
transverse multilinear inequalities were not enough to prove, in any obvious
way, the corresponding linear inequalities needed for the Fourier extension theorem.
\end{remark}

\subsubsection{Convolution of $\nu$-disjoint singular measures on the
paraboloid}

We repeat a subsection on convolution from \cite{RiSa} but in higher
dimensions, and with some misprints corrected. Let $\mu^{1}\equiv\Phi_{\ast
}M_{\psi_{1}}\mathsf{Q}_{s_{1}}^{\eta}f_{1}$ and $\mu^{2}\equiv\Phi_{\ast
}M_{\psi_{2}}\mathsf{Q}_{s_{2}}^{\eta}f_{2}$ denote singular measures on the
paraboloid, that are pushforwards of smooth Alpert projections at levels
$s_{1}<s_{2}$, and where $\operatorname*{diam}\left(  U_{1}\right)
\approx\operatorname*{diam}\left(  U_{2}\right)  \approx\operatorname*{dist}%
\left(  U_{1},U_{2}\right)  \gtrsim\nu>0$. For $z\in\mathbb{R}^{d}$, denote by
$\omega_{z}$ the translate of a measure $\omega$ by $z$. We use duality to
compute the convolution $\mu^{1}\ast\mu^{2}$ in terms of the measure-valued
integral $\int_{\mathbb{R}^{d}}\left[  \mu_{w}^{1}\left(  \cdot\right)
\right]  d\mu^{2}\left(  w\right)  $ as follows. For $F$ a continuous function
on $\mathbb{R}^{3}$, write
\begin{align*}
& \left\langle F,\Phi_{\ast}M_{\psi_{1}}\mathsf{Q}_{s_{1}}^{\eta}f_{1}\ast
\Phi_{\ast}M_{\psi_{2}}\mathsf{Q}_{s_{2}}^{\eta}f_{2}\right\rangle
=\left\langle F,\mu^{1}\ast\mu^{2}\right\rangle =\left\langle F\left(
\cdot\right)  ,\int_{\mathbb{R}^{d}}\mu_{w}^{1}\left(  \cdot\right)  d\mu
^{2}\left(  w\right)  \right\rangle \\
& =\int_{\mathbb{R}^{d}}F\left(  z\right)  d\left[  \int_{\mathbb{R}^{d}}%
\mu_{w}^{1}\left(  z\right)  d\mu^{2}\left(  w\right)  \right]  =\int
_{\mathbb{R}^{d}}\left\{  \int_{\mathbb{R}^{d}}F\left(  z\right)  d\mu_{w}%
^{1}\left(  z\right)  \right\}  d\mu^{2}\left(  w\right) \\
& =\int_{\mathbb{R}^{d}}\left\{  \int_{\mathbb{R}^{d}}F\left(  z-w\right)
d\mu^{1}\left(  z\right)  \right\}  d\mu^{2}\left(  w\right)  ,
\end{align*}
and using the definitions of $\mu^{1}$ and $\mu^{2}$ as push forwards
respectively of $M_{\psi_{1}}\mathsf{Q}_{s_{1}}^{\eta}f_{1}\left(  v\right)
dv$ and $M_{\psi_{2}}\mathsf{Q}_{s_{2}}^{\eta}f_{2}\left(  u\right)  du$ by
$\Phi$, we see that%
\begin{align*}
\left\langle F,\mu^{1}\ast\mu^{2}\right\rangle  & =\int_{\mathbb{R}^{d}%
}\left\{  \int_{\mathbb{R}^{d}}F\left(  z-w\right)  d\mu^{1}\left(  z\right)
\right\}  d\mu^{2}\left(  w\right) \\
& =\int_{\mathbb{R}^{d}}\left\{  \int_{\mathbb{R}^{d-1}}F\left(  \Phi\left(
v\right)  -w\right)  M_{\psi_{1}}\mathsf{Q}_{s_{1}}^{\eta}f_{1}\left(
v\right)  dv\right\}  d\mu^{2}\left(  w\right) \\
& =\int_{\mathbb{R}^{d-1}}\int_{\mathbb{R}^{d-1}}F\left(  \Phi\left(
v\right)  -\Phi\left(  u\right)  \right)  M_{\psi_{1}}\mathsf{Q}_{s_{1}}%
^{\eta}f_{1}\left(  v\right)  dvM_{\psi2}\mathsf{Q}_{s_{2}}^{\eta}f_{2}\left(
u\right)  du.
\end{align*}

Taking limits we can let $F=\delta_{a}$, so that for $\left(  v,u\right)
\in\mathbb{R}^{d-1}\times\mathbb{R}^{d-1}$,%
\begin{align*}
\left\langle \delta_{a},\mu^{1}\ast\mu^{2}\right\rangle  & =\iint
_{\mathbb{R}^{d-1}\times\mathbb{R}^{d-1}}\delta_{a}\left(  \Phi\left(
v\right)  -\Phi\left(  u\right)  \right)  M_{\psi_{1}}\mathsf{Q}_{s_{1}}%
^{\eta}f_{1}\left(  v\right)  dvM_{\psi2}\mathsf{Q}_{s_{2}}^{\eta}f_{2}\left(
u\right)  du\\
& =\iint_{E_{a}}M_{\psi_{1}}\mathsf{Q}_{s_{1}}^{\eta}f_{1}\left(  v\right)
M_{\psi2}\mathsf{Q}_{s_{2}}^{\eta}f_{2}\left(  u\right)  d\sigma_{E_{a}%
}\left(  u,v\right)  ,
\end{align*}
where $E_{a}$ is a $\left(  d-2\right)  $-dimensional smooth surface in
$\mathbb{R}^{d-1}\times\mathbb{R}^{d-1}$ given by%
\begin{align*}
E_{a}  & \equiv\left\{  \left(  v,u\right)  \in\mathbb{R}^{d-1}\times
\mathbb{R}^{d-1}:\Phi\left(  v\right)  -\Phi\left(  u\right)  =a\right\} \\
& =\left\{  \left(  v,u\right)  \in\mathbb{R}^{d-1}\times\mathbb{R}%
^{d-1}:v-u=a^{\prime}\text{ and }\left\vert v\right\vert ^{2}-\left\vert
u\right\vert ^{2}=a_{3}\right\} \\
& =\left\{  \left(  u+a^{\prime},u\right)  :\left\vert u+a^{\prime}\right\vert
^{2}-\left\vert u\right\vert ^{2}=a_{3}\right\}  =\left\{  \left(
u+a^{\prime},u\right)  :2a^{\prime}\cdot u=a_{3}-\left\vert a^{\prime
}\right\vert ^{2}\right\} \\
& =\left\{  \left(  v,v-a^{\prime}\right)  :\left\vert v\right\vert
^{2}-\left\vert v-a^{\prime}\right\vert ^{2}=a_{3}\right\}  =\left\{  \left(
v,v-a^{\prime}\right)  :2a^{\prime}\cdot v=a_{3}+\left\vert a^{\prime
}\right\vert ^{2}\right\}  ,
\end{align*}
and $\sigma_{E_{a}}$ is surface measure on $E_{a}$. This can be seen from an
application of the implicit function theorem using $\left\vert a^{\prime
}\right\vert =\left\vert v-u\right\vert \geq\nu>0$ by the separation condition
(\ref{weak sep var}), or by direct calculation on the paraboloid using
separation. Thus we have obtained the convolution formula%
\begin{equation}
\mu^{1}\ast\mu^{2}\left(  a\right)  =\iint_{E_{a}}M_{\psi_{1}}\mathsf{Q}%
_{s_{1}}^{\eta}f_{1}\left(  v\right)  M_{\psi2}\mathsf{Q}_{s_{2}}^{\eta}%
f_{2}\left(  u\right)  d\sigma_{E_{a}}\left(  u,v\right)  ,\label{conv form}%
\end{equation}
with $\sigma_{E_{a}}$ as above.

We further conclude from the implicit function theorem that $a\rightarrow
\left\langle \delta_{a},\mu^{1}\ast\mu^{2}\right\rangle $ is a smooth function
of $a$ which is adapted to scale $\min\left\{  2^{-s_{1}},2^{-s_{2}}\right\}
=2^{-s_{2}}$. Moreover, the density `$\mu^{1}\ast\mu^{2}\left(  a\right)  $'
of the absolutely continuous measure $\mu^{1}\ast\mu^{2}=\left(  \mu^{1}%
\ast\mu^{2}\right)  \left(  a\right)  da$ is given by%
\begin{equation}
\mu^{1}\ast\mu^{2}\left(  a\right)  =\left\langle \delta_{a},\mu\ast
\omega\right\rangle ,\label{density}%
\end{equation}
and so altogether we have proved the following lemma.

\begin{lemma}
\label{scales}For $\mu^{1}$ and $\mu^{2}$ as above with $\left\Vert
f_{1}\right\Vert _{L^{\infty}}=\left\Vert f_{2}\right\Vert _{L^{\infty}}=1$
and $s_{1}\leq s_{2}$, the following derivative estimates hold,%
\[
\left\vert \nabla_{a}^{m}\left(  \mu^{1}\ast\mu^{2}\right)  \left(  a\right)
\right\vert \lesssim C_{m}2^{ms_{2}},\ \ \ \ \ \text{for }m\geq0\text{ and
}a\in\operatorname*{Supp}\left(  \mu^{1}\ast\mu^{2}\right)  ,
\]
i.e. $\mu^{1}\ast\mu^{2}$ is smooth at scale $\max\left\{  s_{1}%
,s_{2}\right\}  $.
\end{lemma}

\subsection{A norm calculation}

Fix a cube $U$ centered at the origin in $\mathbb{R}^{n-1}$ with side length
$\frac{1}{4}$ and let $\mathcal{G}$ be a dyadic grid as in \cite{Saw7}, and
for $s\in\mathbb{N}$, let $\mathcal{G}_{s}\left[  U\right]  $ consist of the
$\mathcal{G}$-dyadic subcubes of $U$ with side length $2^{-s}$. Now define the
smooth Alpert projection $\mathsf{Q}_{s}^{n-1,\eta}$ at level $s\in\mathbb{N}$
by%
\[
\mathsf{Q}_{s}^{n-1,\eta}f\equiv\sum_{I\in\mathcal{G}_{s}\left[  U\right]
}\bigtriangleup_{I;\kappa}^{n-1,\eta}f.
\]
We will use the following norm calculation valid in all dimensions $n\geq3$
and for all exponents $1<q<\infty$:%
\begin{align*}
&  \left\Vert \mathsf{Q}_{s}^{n-1,\eta}f\right\Vert _{L^{q}\left(  U\right)
}^{q}=\int_{\mathbb{R}^{n-1}}\left\vert \sum_{I\in\mathcal{G}_{s}\left[
U\right]  }\left\langle \left(  T_{\kappa,\eta}\right)  ^{-1}f,h_{I;\kappa
}^{n-1}\right\rangle h_{I;\kappa}^{n-1,\eta}\left(  x\right)  \right\vert
^{q}dx\\
&  \approx\sum_{I\in\mathcal{G}_{s}\left[  U\right]  }\left\vert \left\langle
\left(  T_{\kappa,\eta}\right)  ^{-1}f,h_{I;\kappa}^{n-1}\right\rangle
\right\vert ^{q}\int_{\mathbb{R}^{n-1}}\left\vert h_{I;\kappa}^{n-1,\eta
}\left(  x\right)  \right\vert ^{q}dx=\sum_{I\in\mathcal{G}_{s}\left[
U\right]  }\left\vert \breve{f}\left(  I\right)  \right\vert ^{q}%
\int_{\mathbb{R}^{n-1}}\left\vert h_{I;\kappa}^{n-1,\eta}\left(  x\right)
\right\vert ^{q}dx\\
&  \approx\sum_{I\in\mathcal{G}_{s}\left[  U\right]  }\left\vert \breve
{f}\left(  I\right)  \right\vert ^{q}\left(  \frac{1}{\sqrt{\left\vert
I\right\vert }}\right)  ^{q}\left\vert I\right\vert =\sum_{I\in\mathcal{G}%
_{s}\left[  U\right]  }\left\vert \breve{f}\left(  I\right)  \right\vert
^{q}\left\vert I\right\vert ^{1-\frac{q}{2}}=2^{s\left(  n-1\right)  \left(
\frac{q}{2}-1\right)  }\left\vert \breve{f}\right\vert _{\ell^{q}\left(
\mathcal{G}_{s}\left[  U\right]  \right)  }^{q}\ ,
\end{align*}
which gives%
\begin{equation}
\left\Vert \mathsf{Q}_{s}^{n-1,\eta}f\right\Vert _{L^{q}\left(  U\right)
}\approx2^{s\left(  n-1\right)  \left(  \frac{1}{2}-\frac{1}{q}\right)
}\left\vert \breve{f}\right\vert _{\ell^{q}\left(  \mathcal{G}_{s}\left[
U\right]  \right)  }\ .\label{norm f}%
\end{equation}
Note that when $q=n^{\ast}=\frac{2n}{n-1}$, we have%
\begin{equation}
\left\Vert \mathsf{Q}_{s}^{n-1,\eta}f\right\Vert _{L^{n^{\ast}}\left(
U\right)  }\approx2^{\frac{s}{n^{\ast}}}\left\vert \breve{f}\right\vert
_{\ell^{n^{\ast}}\left(  \mathcal{G}_{s}\left[  U\right]  \right)
}\ .\label{n star}%
\end{equation}

\section{A reduction to averaged and discretely mollified smooth Alpert
testing}

In this section we reduce the proof of the Fourier extension conjecture for a
smooth compact piece $S$ of the paraboloid $\mathbb{P}^{d-1}$, to an averaged
smooth Alpert testing condition with a perturbed pseudoprojection
$\widetilde{\mathsf{Q}}_{s}^{\eta,\mathcal{G}}$. But first we establish the
testing condition with the doubly smooth Alpert projection $\mathsf{Q}%
_{s}^{\eta,\mathcal{G}}$ at scale $s$ given by%
\[
\mathsf{Q}_{s}^{\eta,\mathcal{G}}f=\sum_{I\in\mathcal{G}_{s}}\left\langle
\left(  T_{\mathcal{G}}^{\eta}\right)  ^{-1}f,h_{I;\kappa}^{\eta}\right\rangle
h_{I;\kappa}^{\eta},
\]
in which the bounded invertible operator $\left(  T_{\mathcal{G}}^{\eta
}\right)  ^{-1}$ appears in the inner product. This is then removed in the
perturbed projection.

\begin{definition}
As mentioned in the introduction, we denote the expectation over the family
$\mathbb{G}$ of all dyadic grids by $\mathbb{E}_{\mathbb{G}}$, e.g. as in
\cite{Hyt}.
\end{definition}

\begin{definition}
\label{LSST}Let $d\geq3$, $\kappa\in\mathbb{N}$ and $\delta>0$. With $\psi$ a
fixed cutoff function such that $\operatorname*{Supp}\psi\subset U$, we say
that the \emph{Linear Single Scale Testing condition for the triple }$\left(
d,\kappa,\delta\right)  $, which we abbreviate as $\operatorname*{LSST}\left(
d,\kappa,\delta\right)  $, holds if for every $\varepsilon^{\prime}>0$, there
is a positive constant $C_{d,\varepsilon^{\prime},\delta,\kappa}$ such that%
\begin{equation}
\left\Vert \mathbb{E}_{\mathbb{G}}\mathcal{E}M_{\psi}^{\mathcal{G}}%
\mathsf{Q}_{s}^{\eta,\mathcal{G}}f\right\Vert _{L^{q}\left(  B\left(
0,2^{r}\right)  \right)  }\leq C_{\varepsilon^{\prime},\delta,\kappa
,q}2^{s\varepsilon^{\prime}}\left\Vert f\right\Vert _{L^{\infty}\left(
U\right)  },\label{lin d}%
\end{equation}
taken over all $s\in\mathbb{N}$ with $\frac{r}{1+\delta}\leq s\leq\frac
{r}{1-\delta}$, all smooth Alpert projections $\mathsf{Q}_{s}^{\eta
,\mathcal{G}}$ in the grid $\mathcal{G}$ with moment vanishing parameter
$\kappa$, and all $f\in L^{q}\left(  U\right)  $ with $q\geq\frac{2d}{d-1}$.
\end{definition}

\begin{theorem}
\label{Thm LSST}The Fourier extension conjecture for a smooth compact piece
$S$ of the paraboloid $\mathbb{P}^{d-1}$ in $\mathbb{R}^{d}$ is equivalent to
the linear single scale testing condition $\operatorname*{LSST}\left(
d,\kappa,\delta\right)  $.
\end{theorem}

Of course we may replace the $L^{\infty}\left(  U\right)  $ norm with the
$L^{p}\left(  U\right)  $ in Theorem \ref{Thm LSST} for any $q\leq p\leq
\infty$. As already alluded to, our proof of Theorem \ref{Thm LSST} proceeds
by using Theorem \ref{Thm 6 copy(1)} to reduce matters to a bilinear
inequality, and then expanding the function $f$ in (\ref{important}) in its
smooth Alpert decomposition. Using variants of arguments in \cite{RiSa} and
\cite{Saw7}, we compute that most of the scales can be controlled using
smoothness and moment vanishing of the smooth Alpert wavelets, leaving only
the scales described in the definition of $\otimes_{2}\Lambda_{\delta}^{\ast
}\left(  R\right)  $ in (\ref{def Lambda}) below, to be addressed. Then we use
a good lambda inequality to further reduce the standard multiplier to a
discrete multiplier, which is necessary if wants to average a quadratic
exponential sum into an oscillatory integral with periodic amplitude.

\subsection{A reduction of scales}

For $R=2^{r}\geq1$ define%
\begin{align}
\Lambda_{\delta}\left(  R\right)   & \equiv\sup_{\psi}\sup_{\left\Vert
f\right\Vert _{L^{\infty}}\leq1}\left\Vert \mathcal{E}M_{\psi}f\right\Vert
_{L^{q}\left(  B\left(  0,R\right)  \right)  }\ ,\label{def Lambda}\\
\otimes_{2}\Lambda_{\delta}\left(  R\right)   & \equiv\sup_{\psi_{1},\psi_{2}%
}\sup_{\left\Vert f\right\Vert _{L^{\infty}}\leq1}\left\Vert \prod_{j=1}%
^{2}\mathcal{E}M_{\psi_{j}}f\right\Vert _{L^{\frac{q}{2}}\left(  B\left(
0,R\right)  \right)  }^{\frac{1}{2}}\ ,\nonumber\\
\Lambda_{\delta}^{\ast}\left(  R\right)   & \equiv\sup_{\frac{r}{1+\delta
}<s<\frac{r}{1-\delta}}\sup_{\psi}\sup_{\left\Vert f\right\Vert _{L^{\infty}%
}\leq1}\left\Vert \mathbb{E}_{\mathbb{G}}\mathcal{E}M_{\psi}^{\mathcal{G}%
}\mathsf{Q}_{s}^{\eta,\mathcal{G}}f\right\Vert _{L^{q}\left(  B\left(
0,R\right)  \right)  }\ ,\nonumber\\
\text{and }\otimes_{2}\Lambda_{\delta}^{\ast}\left(  R\right)   & \equiv
\sup_{\delta r<s_{1}\leq s_{2}\text{ and }\frac{r}{1+\delta}<s_{2}<\frac
{r}{1-\delta}}\sup_{\psi_{1},\psi_{2}}\sup_{\left\Vert f\right\Vert
_{L^{\infty}}\leq1}\left\Vert \prod_{j=1}^{2}\mathbb{E}_{\mathbb{G}%
}\mathcal{E}M_{\psi_{j}}^{\mathcal{G}}\mathsf{Q}_{s_{j}}^{\eta,\mathcal{G}%
}f\right\Vert _{L^{\frac{q}{2}}\left(  B\left(  0,R\right)  \right)  }%
^{\frac{1}{2}},\nonumber
\end{align}
where the suprema are taken over all $f\in L^{\infty}\left(  U\right)  $, and
all functions $\psi_{k}\in C_{c}^{\infty}\left(  \pi^{\left(  2\right)  }%
U_{k}\right)  $ that are uniformly smooth at the scale $\ell\left(
U_{k}\right)  $ with constant $\left\Vert \psi_{k}\right\Vert _{C_{d,\delta
}^{\ast}}\leq A_{d,\delta}$ and with $\psi_{j}=1$ on $U_{j}$, where $\left(
U_{1},U_{2}\right)  \subset U^{2}$ are such that $U_{j}$ is an interior
grandchild of its grandparent $\pi^{\left(  2\right)  }U_{j}$ and such that
the $\nu$-disjoint condition (\ref{weak sep var}) holds. Note that all four
quantities are finite since $\mathcal{E}M_{\psi_{j}}f$, $\mathcal{E}%
M_{\psi_{j}}^{\mathcal{G}}f$, $M_{\psi}^{\mathcal{G}}\mathsf{Q}_{s}%
^{\eta,\mathcal{G}}f$ and $M_{\psi_{j}}^{\mathcal{G}}\mathsf{Q}_{s_{j}}%
^{\eta,\mathcal{G}}$ are bounded functions, thus permitting the absorptions
below. Note also that the expressions involving $\Lambda_{\delta}^{\ast}$
differ from those involving $\Lambda_{\delta}$ without the asterisk, in three ways:

\begin{enumerate}
\item the scales of the perturbed pseudoprojections are restricted,

\item there is an expectation in front of all functions,

\item and the multipliers are discrete.
\end{enumerate}

\begin{description}
\item[Note on averaging over grids] Independent expectations $\mathbb{E}%
_{\mathbb{G}}=\mathbb{E}_{\mathbb{G}}^{\mathcal{G}_{1}}$ and $\mathbb{E}%
_{\mathbb{G}}=\mathbb{E}_{\mathbb{G}}^{\mathcal{G}_{2}}$ are applied to the
two factors $\mathcal{E}M_{\psi_{1}}^{\mathcal{G}}\mathsf{Q}_{s_{1}}%
^{\eta,\mathcal{G}}f_{1}$ and $\mathcal{E}M_{\psi_{2}}^{\mathcal{G}}%
\mathsf{Q}_{s_{2}}^{\eta,\mathcal{G}}f_{2}$ in $\otimes_{2}\Lambda_{\delta
}^{\ast}\left(  R\right)  $, and as we will see, our deterministic bilinear
\emph{error} estimates below hold uniformly for every pair of grids
$\mathcal{G}_{1}$ and $\mathcal{G}_{2}$. Furthermore, since $U_{1}$ and
$U_{2}$ are $\nu$-disjoint, we may view $f_{j}$ as the restriction to $U_{j}$
of a single bounded function $f$, and we will sometimes write $f$ in place of
$f_{j}$.
\end{description}

Our goal in this section is to establish the good lambda inequality,%
\begin{equation}
\mathfrak{N}_{R}^{\left(  q\right)  }\lesssim R^{\varepsilon}\delta
r\sqrt{\mathfrak{N}_{R}^{\left(  q\right)  }\Lambda_{\delta}^{\ast}\left(
R\right)  }+R^{\varepsilon}R^{\frac{d\delta}{2}}\delta r\sqrt{1+\Lambda
_{\delta}^{\ast}\left(  R\right)  },\label{gli}%
\end{equation}
which is accomplished in the final subsection of this section.

With $R=2^{r}$ we first claim that the following reduction of scales in
$\otimes_{2}\Lambda_{\delta}\left(  R\right)  $ holds, where the expectation
operator $\mathbb{E}_{\mathbb{G}}$ makes its first appearance in the second
line :%
\begin{align}
& \sup_{\left\Vert f\right\Vert _{L^{\infty}}\leq1}\left\Vert \prod_{j=1}%
^{2}\mathcal{E}M_{\psi_{j}}f\right\Vert _{L^{\frac{q}{2}}\left(  A\left(
0,R\right)  \right)  }^{\frac{1}{2}}\label{claim red}\\
& \leq C\left(  \delta r\right)  ^{2}\sup_{\delta r<s_{1}\leq s_{2}\text{ and
}\frac{r}{1+\delta}<s_{2}<\frac{r}{1-\delta}}\sup_{\left\Vert f\right\Vert
_{L^{\infty}}\leq1}\left\Vert \prod_{j=1}^{2}\mathbb{E}_{\mathbb{G}%
}\mathcal{E}M_{\psi_{j}}\mathsf{Q}_{s_{j}}^{\eta,\mathcal{G}}f\right\Vert
_{L^{\frac{q}{2}}\left(  A\left(  0,R\right)  \right)  }^{\frac{1}{2}%
}\nonumber\\
& +C\left(  \delta r\right)  ^{2}\left(  \sup_{\frac{r}{1+\delta}<s_{2}%
<\frac{r}{1-\delta}}\sup_{\left\Vert f\right\Vert _{L^{\infty}}\leq
1}\left\Vert \mathbb{E}_{\mathbb{G}}\mathcal{E}M_{\psi_{2}}\mathsf{Q}_{s_{2}%
}^{\eta,\mathcal{G}}f\right\Vert _{L^{\frac{q}{2}}\left(  A\left(  0,R\right)
\right)  }^{\frac{1}{2}}+\left\Vert f\right\Vert _{\infty}\right)  .\nonumber
\end{align}
Note that once (\ref{claim red}) is proved for the annulus $A\left(
0,R\right)  $, then the same inequality holds for the ball $B\left(
0,R\right)  $ upon writing the ball as a union of annuli of radius $2^{-n}R$
for $n\geq0$.

Since $\mathcal{E}M_{\psi_{j}}f$ is independent of the grid $\mathcal{G}%
\in\mathbb{G}$, we have%
\begin{align*}
& \sup_{\left\Vert f\right\Vert _{L^{\infty}}\leq1}\left\Vert \prod_{j=1}%
^{2}\mathcal{E}M_{\psi_{j}}f\right\Vert _{L^{\frac{q}{2}}\left(  A\left(
0,R\right)  \right)  }^{\frac{1}{2}}\\
& =\sup_{\left\Vert f\right\Vert _{L^{\infty}}\leq1}\left\Vert \prod_{j=1}%
^{2}\mathbb{E}_{\mathbb{G}}\mathcal{E}M_{\psi_{j}}f\right\Vert _{L^{\frac
{q}{2}}\left(  A\left(  0,R\right)  \right)  }^{\frac{1}{2}}=\sup_{\left\Vert
f\right\Vert _{L^{\infty}}\leq1}\left\Vert \prod_{j=1}^{2}\mathbb{E}%
_{\mathbb{G}}\mathcal{E}M_{\psi_{j}}\sum_{s_{j}\in\mathbb{Z}}\mathsf{Q}%
_{s_{j}}^{\eta,\mathcal{G}}f\right\Vert _{L^{\frac{q}{2}}\left(  A\left(
0,R\right)  \right)  }^{\frac{1}{2}},
\end{align*}
upon expanding $f$ into its smooth Alpert expansion. We now note that in order
to prove (\ref{claim red}), it suffices to show that%
\begin{align}
& \sup_{\left\Vert f\right\Vert _{L^{\infty}}\leq1}\left\Vert \prod_{j=1}%
^{2}\mathbb{E}_{\mathbb{G}}\mathcal{E}M_{\psi_{j}}\sum_{s_{j}\in\mathbb{Z}%
}\mathsf{Q}_{s_{j}}^{\eta,\mathcal{G}}f\right\Vert _{L^{\frac{q}{2}}\left(
A\left(  0,R\right)  \right)  }^{\frac{1}{2}}\label{suff cutoff}\\
& \leq C\sup_{\left\Vert f\right\Vert _{L^{\infty}}\leq1}\left\Vert
\prod_{j=1}^{2}\mathbb{E}_{\mathbb{G}}\mathcal{E}M_{\psi_{j}}\sum_{\delta
r<s_{1}\leq s_{2}\text{ and }\frac{r}{1+\delta}<s_{2}<\frac{r}{1-\delta}%
}\mathsf{Q}_{s_{j}}^{\eta,\mathcal{G}}f\right\Vert _{L^{\frac{q}{2}}\left(
A\left(  0,R\right)  \right)  }^{\frac{1}{2}}\nonumber\\
& +C\left(  \delta r\right)  ^{2}\left(  \sup_{\frac{r}{1+\delta}<s_{2}%
<\frac{r}{1-\delta}}\sup_{\left\Vert f\right\Vert _{L^{\infty}}\leq
1}\left\Vert \mathbb{E}_{\mathbb{G}}\mathcal{E}M_{\psi_{2}}\mathsf{Q}_{s_{2}%
}^{\eta,\mathcal{G}}f\right\Vert _{L^{\frac{q}{2}}\left(  A\left(  0,R\right)
\right)  }^{\frac{1}{2}}+\left\Vert f\right\Vert _{\infty}\right)  ,\nonumber
\end{align}
where the multipliers $M_{\psi_{j}}$ are standard here, not discrete.

To prove (\ref{suff cutoff}), we will exploit the smoothness and moment
vanishing properties of smooth Alpert wavelets to obtain geometric decay
in\ convolutions of singular measures supported on the paraboloid, and we will
exploit the argument proving the reduction to a truncated inequality in
\cite[Subsubsection 1.4.1]{Saw7}. Recall that $\operatorname*{Supp}\psi
_{j}\subset U_{j}^{\ast}$ and $U_{j}$ is an interior grandchild of
$U_{j}^{\ast}=\pi^{\left(  2\right)  }U_{j}$. We use the following
decomposition of a function $f\in L^{p}\left(  \mathbb{R}^{d-1}\right)  $,%

\begin{align}
f  & =\sum_{I_{j}\in\mathcal{G}}\bigtriangleup_{I_{j};\kappa}^{\eta}%
f=A_{j}+B_{j}\text{ and }A_{j}\equiv\sum_{I\in\mathcal{G}^{\ast}\left[
U_{j}^{\ast}\right]  }\bigtriangleup_{I;\kappa}^{\eta}f_{j}\text{ and }%
B_{j}\equiv\sum_{I\in\mathcal{G\setminus G}^{\ast}\left[  U_{j}^{\ast}\right]
}\bigtriangleup_{I;\kappa}^{\eta}f_{j}\label{f decomp}\\
\text{and }\mathcal{G}^{\ast}\left[  U\right]   & \equiv\left\{
I\in\mathcal{G}:\left(  1+\eta\right)  I\cap U\neq\emptyset\text{ and }%
\ell\left(  I\right)  \leq\ell\left(  U\right)  \right\}  \ .\nonumber
\end{align}

\begin{lemma}
The function $\psi_{j}B_{j}$ is smooth at scale $\nu\approx1$ for $j=1,2$, and
bounded by $\left\Vert f\right\Vert _{L^{p}}$, for any $1<p\leq\infty$.
\end{lemma}

\begin{proof}
We have%
\[
\psi_{j}B_{j}=\sum_{k=1}^{\infty}\sum_{I\in\mathcal{N}\left(  \pi^{\left(
k\right)  }U_{0}\right)  }\psi_{j}\bigtriangleup_{I;\kappa}^{n-1,\eta}%
f=\sum_{k=1}^{\infty}\sum_{I\in\mathcal{N}\left(  \pi^{\left(  k\right)
}U_{0}\right)  }\left\langle \left(  S_{\kappa,\eta}\right)  ^{-1}%
f,h_{I;\kappa}\right\rangle \psi_{j}h_{I;\kappa}^{\eta}\ ,
\]
and so we see that $\psi_{j}B_{j}$ is smooth at scale $1$, compactly supported
and bounded by $\left\Vert f\right\Vert _{L^{p}}$, upon using that
(\textbf{i}) the functions $\psi_{j}h_{I;\kappa}^{\eta}$ are smooth and
compactly supported uniformly in $k$, and that (\textbf{ii}) we have the
pointwise inquality,
\begin{align*}
& \left\vert \sum_{k=1}^{\infty}\sum_{I\in\mathcal{N}\left(  \pi^{\left(
k\right)  }U_{0}\right)  }\left\langle \left(  S_{\kappa,\eta}\right)
^{-1}f,h_{I;\kappa}\right\rangle \psi_{j}h_{I;\kappa}^{\eta}\right\vert
\lesssim\left\Vert \psi_{j}\right\Vert _{L^{\infty}}\sum_{k=1}^{\infty
}\left\Vert \left(  S_{\kappa,\eta}\right)  ^{-1}f\right\Vert _{L^{p}%
}\left\Vert h_{I;\kappa}^{\eta}\right\Vert _{L^{\infty}}\left\Vert
h_{I;\kappa}\right\Vert _{L^{p^{\prime}}}\\
& \lesssim\left\Vert f\right\Vert _{L^{p}}\sum_{k=-\infty}^{v_{k}}%
2^{k\frac{d-1}{2}}\left(  2^{k\frac{n-1}{2}p^{\prime}}2^{-k\left(  n-1\right)
}\right)  ^{\frac{1}{p^{\prime}}}\lesssim\left\Vert f\right\Vert _{L^{p}}%
\sum_{k=-\infty}^{v}2^{k\frac{n-1}{p}}\lesssim\left\Vert f\right\Vert _{L^{p}%
}2^{v\frac{n-1}{p}}\lesssim\left\Vert f\right\Vert _{L^{p}},
\end{align*}
for any $1<p<\infty$.
\end{proof}

Now we continue with the proof of (\ref{suff cutoff}). Without loss of
generality we assume that $s_{1}\leq s_{2}$ and $R=2^{r}$. We will repeatedly
use the crude inequality%
\begin{align*}
\left\vert \widehat{f}\left(  I\right)  \right\vert  & =\left\vert
\left\langle \left(  S_{\kappa,\eta}^{\mathcal{D}}\right)  ^{-1}f,h_{I;\kappa
}\right\rangle \right\vert \leq\left\Vert \left(  S_{\kappa,\eta}%
^{\mathcal{D}}\right)  ^{-1}f\right\Vert _{L^{p}}\left\Vert h_{I;\kappa
}\right\Vert _{L^{p^{\prime}}}\\
& \lesssim\left\Vert f\right\Vert _{L^{p}\left(  U\right)  }\left\Vert
h_{I;\kappa}\right\Vert _{L^{\infty}}\left\Vert \mathbf{1}_{I}\right\Vert
_{L^{p^{\prime}}}\lesssim\left\Vert f\right\Vert _{L^{\infty}\left(  U\right)
}\frac{1}{\ell\left(  I\right)  }\ell\left(  I\right)  ^{\frac{1}{p^{\prime}}%
}=\ell\left(  I\right)  ^{-\frac{1}{p}}\left\Vert f\right\Vert _{L^{\infty}%
}\ ,
\end{align*}
for any $1<p<\infty$, which gives%
\[
\left\vert \bigtriangleup_{I;\kappa}^{\eta}f\right\vert =\left\vert
\widehat{f}\left(  I\right)  h_{I;\kappa}^{\eta}\right\vert \lesssim
\ell\left(  I\right)  ^{-\left(  1+\frac{1}{p}\right)  }\left\Vert
f\right\Vert _{L^{\infty}}\mathbf{1}_{\left(  1+\eta\ell\left(  I\right)
\right)  I},\ \ \ \ \ 1<p<\infty.
\]
We now prove (\ref{suff cutoff}) in three cases, followed by a wrapup.

\subsubsection{Case 1: $s_{2}\gg r$}

First suppose that $s_{1}\leq s_{2}$ and $s_{2}>\frac{r}{1-\delta}$. Then for
$\left\vert \xi\right\vert \approx2^{r}$, we have upon using $\kappa$-moment
vanishing of $\bigtriangleup_{I_{2};\kappa}^{\eta}f_{2}$ and differentiating
$e^{-i\Phi\left(  x\right)  \cdot\xi}=e^{-i\Phi\left(  c_{I_{2}}\right)
\cdot\xi}e^{-i\left[  \Phi\left(  x\right)  -\Phi\left(  c_{I_{2}}\right)
\right]  \cdot\xi}$ with respect to $x$, that for $I_{2}\in\mathcal{G}_{s_{2}%
}$,
\begin{align*}
& \left\vert \left[  \Phi_{\ast}M_{\psi_{2}}\bigtriangleup_{I_{2};\kappa
}^{\eta}f_{2}\right]  ^{\wedge}\left(  \xi\right)  \right\vert =\left\vert
\int e^{-iz\cdot\xi}\Phi_{\ast}M_{\psi_{2}}\bigtriangleup_{I_{2};\kappa}%
^{\eta}f_{2}\left(  z\right)  dz\right\vert =\left\vert \int e^{-i\Phi\left(
x\right)  \cdot\xi}\psi_{2}\left(  x\right)  \bigtriangleup_{I_{2};\kappa
}^{\eta}f_{3}\left(  x\right)  dx\right\vert \\
& \lesssim C_{\kappa}\int\left\vert 2^{-s_{2}}\xi\right\vert ^{\kappa
}\left\vert \bigtriangleup_{I_{2};\kappa}^{\eta}f_{2}\left(  x\right)
\right\vert dx\lesssim C_{\kappa}\int\left(  2^{-s_{2}}\left\vert
\xi\right\vert \right)  ^{\kappa}2^{s_{2}\left(  1+\frac{1}{p}\right)
}\left\Vert f_{2}\right\Vert _{L^{\infty}}\mathbf{1}_{\left(  1+\eta
\ell\left(  I_{2}\right)  \right)  I_{2}}dx\\
& \leq C_{\kappa}2^{-\left(  \kappa-1-\frac{1}{p}\right)  s_{2}}2^{\kappa
r}\left\Vert f_{2}\right\Vert _{L^{\infty}}=C_{\kappa}2^{-\kappa\left(
s_{2}-r\right)  }2^{\left(  1+\frac{1}{p}\right)  s_{2}}\left\Vert
f_{2}\right\Vert _{L^{\infty}},
\end{align*}
where the terms $B_{j}$ can only occur as $B_{1}A_{2}$, since the scale of
smoothness of any term $B_{j}$ is roughly $0$, which is less than $s_{1}$, so
in the ordering of scales, $B_{j}$ will occur first. Moreover, since this
estimate is independent of any pair of grids $\mathcal{G}_{1}$ and
$\mathcal{G}_{2}$, we will obtain the same estimate for the product of
averaged extension operators. Thus for the case $B_{1}A_{2}$ we obtain the
same estimate as for $A_{1}A_{2}$, which is
\begin{align*}
& \left(  \int_{A\left(  0,2^{r}\right)  }\left\vert \mathbb{E}_{\mathbb{G}%
}\mathcal{E}M_{\psi_{1}}\mathsf{Q}_{s_{1}}^{\eta}f_{1}\left(  \xi\right)
\mathbb{E}_{\mathbb{G}}\mathcal{E}M_{\psi_{2}}\mathsf{Q}_{s_{2}}^{\eta}%
f_{2}\left(  \xi\right)  \right\vert ^{\frac{q}{2}}\ d\xi\right)  ^{\frac
{2}{q}}\\
& \lesssim\left(  \int_{A\left(  0,2^{r}\right)  }\left(  \#\mathcal{G}%
_{s_{1}}\left[  U\right]  \#\mathcal{G}_{s_{2}}\left[  U\right]  \left\Vert
f_{1}\right\Vert _{L^{\infty}}\left\Vert f_{2}\right\Vert _{L^{\infty}%
}C_{\kappa}2^{-\kappa\left(  s_{2}-r\right)  }2^{-\frac{1}{p^{\prime}}s_{2}%
}\right)  ^{\frac{q}{2}}\ d\xi\right)  ^{\frac{2}{q}}\\
& \lesssim C_{\kappa}2^{-\kappa\left(  s_{2}-r\right)  }2^{1+\frac{1}{p}s_{2}%
}\left(  \int_{A\left(  0,2^{r}\right)  }d\xi\right)  ^{\frac{2}{q}}\left\Vert
f_{1}\right\Vert _{L^{\infty}}\left\Vert f_{2}\right\Vert _{L^{\infty}}\\
& \leq C_{\kappa}2^{-\kappa\left(  s_{2}-\left(  1-\delta\right)
s_{2}\right)  }2^{\left(  2\left(  d-1\right)  +1+\frac{1}{p}\right)  s_{2}%
}2^{\frac{2d}{q}\left(  1-\delta\right)  s_{2}}\left\Vert f_{1}\right\Vert
_{L^{\infty}}\left\Vert f_{2}\right\Vert _{L^{\infty}}\\
& =C_{\kappa}2^{-\kappa\delta s_{2}}2^{\left(  \frac{2d}{q}\left(
1-\delta\right)  +2d-\frac{1}{p^{\prime}}\right)  s_{2}}\left\Vert
f_{1}\right\Vert _{L^{\infty}}\left\Vert f_{2}\right\Vert _{L^{\infty}}\leq
C_{\kappa}2^{-s_{2}}\left\Vert f_{1}\right\Vert _{L^{\infty}}\left\Vert
f_{2}\right\Vert _{L^{\infty}}\ ,
\end{align*}

if $\kappa$ is chosen sufficiently large. Then summing in $s_{2}>\frac
{r}{1-\delta}$ gives
\begin{align}
& \left(  \int_{A\left(  0,2^{r}\right)  }\left(  \sum_{s_{1}\leq s_{2}\text{
and }s_{2}>\frac{r}{1-\delta}}\left\vert \mathbb{E}_{\mathbb{G}}%
\mathcal{E}M_{\psi_{1}}\mathsf{Q}_{s_{1}}^{\eta}f_{1}\left(  \xi\right)
\mathbb{E}_{\mathbb{G}}\mathcal{E}M_{\psi_{2}}\mathsf{Q}_{s_{2}}^{\eta}%
f_{2}\left(  \xi\right)  \right\vert \right)  ^{\frac{q}{2}}\ d\xi\right)
^{\frac{2}{q}}\label{first half}\\
& \leq\sum_{s_{1}\leq s_{2}\text{ and }s_{2}>\frac{r}{1-\delta}}\left(
\int_{B\left(  0,2^{r}\right)  }\left\vert \mathbb{E}_{\mathbb{G}}%
\mathcal{E}M_{\psi_{1}}\mathsf{Q}_{s_{1}}^{\eta}f_{1}\left(  \xi\right)
\mathbb{E}_{\mathbb{G}}\mathcal{E}M_{\psi_{2}}\mathsf{Q}_{s_{2}}^{\eta}%
f_{2}\left(  \xi\right)  \right\vert ^{\frac{q}{2}}\ d\xi\right)  ^{\frac
{2}{q}}\nonumber\\
& \lesssim C_{\kappa}\sum_{s_{2}>\frac{r}{1-\delta}}s_{2}2^{-s_{2}}\left\Vert
f_{1}\right\Vert _{L^{\infty}}\left\Vert f_{2}\right\Vert _{L^{\infty}%
}\lesssim C_{\kappa}r2^{-\frac{r}{1-\delta}}\left\Vert f_{1}\right\Vert
_{L^{p}}\left\Vert f_{2}\right\Vert _{L^{p}}.\nonumber
\end{align}

\subsubsection{Case 2: $s_{2}\ll r$}

Next we suppose $s_{1}<s_{2}\leq\frac{r}{1+\delta}$, and $I_{1}\in
\mathcal{G}_{s_{1}}\left[  U_{1}\right]  $ and $I_{2}\in\mathcal{G}_{s_{2}%
}\left[  U_{2}\right]  $. Then from Lemma \ref{scales}, we obtain that%
\[
F_{I_{1},I_{2}}\equiv\Phi_{\ast}\bigtriangleup_{I_{1};\kappa}^{\eta}f\ast
\Phi_{\ast}\bigtriangleup_{I_{2};\kappa}^{\eta}f\left(  z\right)
\]
is compactly supported in the $d$-dimensional rectangle $2\left(  \Phi\left(
I_{1}\right)  +\Phi\left(  I_{2}\right)  \right)  $, and smoothly adapted to
scale $2^{-s_{2}}$, or better in the case $B_{1}A_{2}$. Thus for $\xi\in
A\left(  0,2^{r}\right)  $ in the case $B_{1}A_{2}$, we get the same estimate
as we get for $A_{1}A_{2}$, which is
\begin{align*}
& \left\vert \left[  \Phi_{\ast}\bigtriangleup_{I_{1};\kappa}^{\eta}f\ast
\Phi_{\ast}\bigtriangleup_{I_{2};\kappa}^{\eta}f\right]  ^{\wedge}\left(
\xi\right)  \right\vert =\left\vert \int e^{-iz\cdot\xi}\Phi_{\ast}%
F_{I_{1},I_{2}}\left(  z\right)  dz\right\vert \lesssim\left(  \frac{2^{s_{2}%
}}{\left\vert \xi\right\vert }\right)  ^{N}\int\left\vert \nabla^{N}%
F_{I_{1},I_{2}}\right\vert \left(  z\right)  dz\\
& \leq C_{N}\left(  \frac{2^{s_{2}}}{\left\vert \xi\right\vert }\right)
^{N}\left\vert \Phi\left(  I_{1}\right)  +\Phi\left(  I_{2}\right)
\right\vert \left\Vert f_{1}\right\Vert _{L^{\infty}}\left\Vert f_{2}%
\right\Vert _{L^{\infty}}\approx C_{N}\left(  \frac{2^{s_{2}}}{2^{r}}\right)
^{N}2^{-s_{1}-2s_{2}}\left\Vert f_{1}\right\Vert _{L^{\infty}}\left\Vert
f_{2}\right\Vert _{L^{\infty}}\ .
\end{align*}

As a consequence of these estimates being independent of any pair of grids
$\mathcal{G}_{1}$ and $\mathcal{G}_{2}$, we have%
\begin{align*}
& \left(  \int_{A\left(  0,R\right)  }\left\vert \mathbb{E}_{\mathbb{G}%
}\mathcal{E}M_{\psi_{1}}\mathsf{Q}_{s_{1}}^{\eta}f_{1}\left(  \xi\right)
\mathbb{E}_{\mathbb{G}}\mathcal{E}M_{\psi_{2}}\mathsf{Q}_{s_{2}}^{\eta}%
f_{2}\left(  \xi\right)  \right\vert ^{\frac{q}{2}}d\xi\right)  ^{\frac{2}{q}%
}\\
& =\left(  \int_{A\left(  0,2^{r}\right)  }\left(  \left\vert \left[
\mathbb{E}_{\mathbb{G}}\Phi_{\ast}M_{\psi_{1}}\mathsf{Q}_{s_{1}}^{\eta}%
f_{1}\ast\mathbb{E}_{\mathbb{G}}M_{\psi_{2}}\mathsf{Q}_{s_{2}}^{\eta}%
f_{2}\right]  ^{\wedge}\left(  \xi\right)  \right\vert \right)  ^{\frac{q}{2}%
}d\xi\right)  ^{\frac{2}{q}}\\
& \leq\sum_{I_{1}\in\mathcal{G}_{s_{1}}\left[  U_{1}\right]  }\sum_{I_{2}%
\in\mathcal{G}_{s_{2}}\left[  U_{2}\right]  }\mathbb{E}_{\mathbb{G}}%
\mathbb{E}_{\mathbb{G}}\left(  \int_{A\left(  0,2^{r}\right)  }\left\vert
\left\{  \left[  \Phi_{\ast}M_{\psi_{1}}\mathsf{Q}_{s_{1}}^{\eta}f_{1}\ast
M_{\psi_{2}}\mathsf{Q}_{s_{2}}^{\eta}f_{2}\right]  ^{\wedge}\left(
\xi\right)  \right\}  \right\vert ^{\frac{q}{2}}d\xi\right)  ^{\frac{2}{q}}\\
& \leq\sum_{I_{1}\in\mathcal{G}_{s_{1}}\left[  U_{1}\right]  }\sum_{I_{2}%
\in\mathcal{G}_{s_{2}}\left[  U_{2}\right]  }\left(  \int_{A\left(
0,2^{r}\right)  }\left(  C_{N}\left(  \frac{2^{s_{2}}}{2^{r}}\right)
^{N}2^{-s_{1}-2s_{2}}\left\Vert f_{1}\right\Vert _{L^{\infty}}\left\Vert
f_{2}\right\Vert _{L^{\infty}}\right)  ^{\frac{q}{2}}d\xi\right)  ^{\frac
{2}{q}},
\end{align*}
which is approximately,%
\begin{align*}
& C_{N}2^{2s_{1}}2^{2s_{2}}\left(  \frac{2^{s_{2}}}{2^{r}}\right)
^{N}2^{-s_{1}-2s_{2}}\left\Vert f_{1}\right\Vert _{L^{\infty}}\left\Vert
f_{2}\right\Vert _{L^{\infty}}2^{r\frac{2d}{q}}\lesssim C_{N}\left(
\frac{2^{s_{2}}}{2^{r}}\right)  ^{N}2^{s_{1}}\left\Vert f_{1}\right\Vert
_{L^{\infty}}\left\Vert f_{2}\right\Vert _{L^{\infty}}2^{r\frac{2d}{q}}\\
& \leq C_{N}2^{\left(  \frac{2d}{q}-N\right)  r}2^{\left(  N+1\right)  s_{2}%
}\left\Vert f_{1}\right\Vert _{L^{\infty}}\left\Vert f_{2}\right\Vert
_{L^{\infty}}<C_{N}2^{\left(  \frac{2d}{q}-N\right)  r}2^{\left(  N+1\right)
\frac{r}{1+\delta}}\left\Vert f_{1}\right\Vert _{L^{\infty}}\left\Vert
f_{2}\right\Vert _{L^{\infty}}\ .
\end{align*}
If we choose $\frac{N+1}{1+\delta}+\frac{2d}{q}-N<-1$, then $2^{\left(
\frac{2d}{q}-N\right)  r}2^{\left(  N+1\right)  \frac{r}{1+\delta}}<2^{-r}$
and%
\[
\sum_{s_{1},s_{2}=1}^{\frac{r}{1+\delta}}\left(  \int_{A\left(  0,R\right)
}\left\vert \mathbb{E}_{\mathbb{G}}\mathcal{E}M_{\psi_{1}}\mathsf{Q}_{s_{1}%
}^{\eta}f_{1}\left(  \xi\right)  \mathbb{E}_{\mathbb{G}}\mathcal{E}M_{\psi
_{2}}\mathsf{Q}_{s_{2}}^{\eta}f_{2}\left(  \xi\right)  \right\vert ^{\frac
{q}{2}}d\xi\right)  ^{\frac{2}{q}}\lesssim C_{N}r^{2}2^{-r},
\]
for all $s_{2}\leq\frac{r}{1-\delta}$ and for $N$ sufficiently large,
independent of $s_{2}$. Altogether we have%
\begin{align}
& \left(  \int_{A\left(  0,2^{r}\right)  }\left(  \sum_{s_{1}\leq s_{2}%
\leq\frac{r}{1+\delta}}\left\vert \mathbb{E}_{\mathbb{G}}\mathcal{E}%
M_{\psi_{1}}\mathsf{Q}_{s_{1}}^{\eta}f_{1}\left(  \xi\right)  \mathbb{E}%
_{\mathbb{G}}\mathcal{E}M_{\psi_{2}}\mathsf{Q}_{s_{2}}^{\eta}f_{2}\left(
\xi\right)  \right\vert \right)  ^{\frac{q}{2}}\ d\xi\right)  ^{\frac{2}{q}%
}\label{second half}\\
& \leq\sum_{s_{1}\leq s_{2}\leq\frac{r}{1+\delta}}\left(  \int_{A\left(
0,2^{r}\right)  }\left\vert \mathbb{E}_{\mathbb{G}}\mathcal{E}M_{\psi_{1}%
}\mathsf{Q}_{s_{1}}^{\eta}f_{1}\left(  \xi\right)  \mathbb{E}_{\mathbb{G}%
}\mathcal{E}M_{\psi_{2}}\mathsf{Q}_{s_{2}}^{\eta}f_{2}\left(  \xi\right)
\right\vert ^{\frac{q}{2}}\ d\xi\right)  ^{\frac{2}{q}}\nonumber\\
& \lesssim\left(  C_{N}r^{2}2^{-r}\right)  \frac{r}{1-\delta}\lesssim
C_{N}r^{3}2^{-r}.\nonumber
\end{align}

\subsubsection{Case 3: $s_{1}\ll\ll r$}

Here we suppose that $s_{1}\leq\delta r$ and $\frac{r}{1+\delta}\leq s_{2}%
\leq\frac{r}{1-\delta}$. Now $M_{\psi_{1}}\mathsf{Q}_{\kappa,s_{1}}^{\eta
}f_{1}$ is smooth at scale $2^{-s_{1}}\geq2^{-\delta r}$, so if
$x_{\operatorname*{crit}}=\frac{-\xi^{\prime}}{\xi_{d}}\in
2\operatorname*{Supp}\psi_{1}=B\left(  0,2\right)  $, i.e. $\left\vert
\xi^{\prime}\right\vert \leq2\xi_{d}$, then $\xi_{d}\approx\left\vert
\xi\right\vert \approx2^{r}$, and stationary phase gives the estimate
\begin{align*}
& \left\vert \mathcal{E}M_{\psi_{1}}\mathsf{Q}_{\kappa,s_{1}}^{\eta}%
f_{1}\left(  \xi\right)  \right\vert =\left\vert \int e^{-i\left(  x\cdot
\xi^{\prime}+\left\vert x\right\vert ^{2}\xi_{d}\right)  }\left\{  \psi
_{1}\left(  x\right)  \mathsf{Q}_{\kappa,s_{1}}^{\eta}f_{1}\left(  x\right)
\right\}  dx\right\vert \\
& \lesssim C_{d}2^{d\delta r}\left(  \frac{1}{2^{r}}\right)  ^{\frac{d-1}{2}%
}\left\Vert f_{1}\right\Vert _{L^{\infty}}\approx2^{d\delta r}2^{-r\frac
{d-1}{2}}\left\Vert f_{1}\right\Vert _{L^{\infty}}.
\end{align*}
Of course if $x_{\operatorname*{crit}}\not \in 2\operatorname*{Supp}\psi_{1}$,
then we obtain a better rapid decay estimate from integration by parts.

Using H\"{o}lder's inequality, we thus obtain%
\begin{align}
& \sup_{0\leq s_{1}\leq\delta r\text{ and }\frac{r}{1+\delta}<s_{2}<\frac
{r}{1-\delta}}\sup_{\left\Vert f\right\Vert _{L^{\infty}}\leq1}\left\Vert
\left(  \mathbb{E}_{\mathbb{G}}\mathcal{E}M_{\psi_{1}}\mathsf{Q}_{\kappa
,s_{1}}^{\eta}f_{1}\right)  \left(  \mathbb{E}_{\mathbb{G}}\mathcal{E}%
M_{\psi_{2}}\mathsf{Q}_{\kappa,s_{2}}^{\eta}f_{2}\right)  \right\Vert
_{L^{\frac{q}{2}}\left(  A\left(  0,2^{r}\right)  \right)  }\label{third half}%
\\
& =\sup_{0\leq s_{1}\leq\delta r\text{ and }\frac{r}{1+\delta}<s_{2}<\frac
{r}{1-\delta}}\sup_{\left\Vert f\right\Vert _{L^{\infty}}\leq1}\left\Vert
\mathbb{E}_{\mathbb{G}}\mathcal{E}M_{\psi_{1}}\mathsf{Q}_{\kappa,s_{1}}^{\eta
}f\right\Vert _{L^{q}\left(  A\left(  0,2^{r}\right)  \right)  }\left\Vert
\mathbb{E}_{\mathbb{G}}\mathcal{E}M_{\psi_{2}}\mathsf{Q}_{\kappa,s_{2}}^{\eta
}f\right\Vert _{L^{q}\left(  A\left(  0,2^{r}\right)  \right)  }\nonumber\\
& \lesssim\sup_{\frac{r}{1+\delta}<s_{2}<\frac{r}{1-\delta}}\sup_{\left\Vert
f\right\Vert _{L^{\infty}}\leq1}2^{d\delta r}2^{-r\frac{d-1}{2}}\left\Vert
f_{1}\right\Vert _{L^{\infty}}2^{\frac{d}{q}r}\left\Vert \mathbb{E}%
_{\mathbb{G}}\mathcal{E}M_{\psi_{2}}\mathsf{Q}_{\kappa,s_{2}}^{\eta}%
f_{2}\right\Vert _{L^{q}\left(  A\left(  0,2^{r}\right)  \right)  }\nonumber\\
& \lesssim2^{d\delta r}\sup_{\frac{r}{1+\delta}<s_{2}<\frac{r}{1-\delta}}%
\sup_{\left\Vert f\right\Vert _{L^{\infty}}\leq1}\left\Vert \mathbb{E}%
_{\mathbb{G}}\mathcal{E}M_{\psi_{2}}\mathsf{Q}_{\kappa,s_{2}}^{\eta}%
f_{2}\right\Vert _{L^{q}\left(  A\left(  0,2^{r}\right)  \right)  },\nonumber
\end{align}
provided $q\geq\frac{2d}{d-1}$.

\subsubsection{Wrapup}

Now we can finish the proof that (\ref{suff cutoff}) holds. With $f_{k}%
=f\mid_{U_{k}}$ for some $f\in L^{\infty}\left(  U\right)  $ and $R=2^{r}$, we
obtain,%
\begin{align}
& \left(  \int_{A\left(  0,R\right)  }\left\vert \mathbb{E}_{\mathbb{G}%
}\mathcal{E}M_{\psi_{1}}f_{1}\left(  \xi\right)  \mathbb{E}_{\mathbb{G}%
}\mathcal{E}M_{\psi_{2}}f_{2}\left(  \xi\right)  \right\vert ^{\frac{q}{2}%
}\ d\xi\right)  ^{\frac{2}{q}}\label{start}\\
& =\left(  \int_{A\left(  0,2^{r}\right)  }\left\vert \mathbb{E}_{\mathbb{G}%
}\mathcal{E}M_{\psi_{1}}\sum_{I_{1}\in\mathcal{G}}\bigtriangleup_{I_{1}%
;\kappa}^{\eta}f\left(  \xi\right)  \mathbb{E}_{\mathbb{G}}\mathcal{E}%
M_{\psi_{2}}\sum_{I_{2}\in\mathcal{G}}\bigtriangleup_{I_{2};\kappa}^{\eta
}f\left(  \xi\right)  \right\vert ^{\frac{q}{2}}\ d\xi\right)  ^{\frac{2}{q}%
}\nonumber\\
& =\left(  \int_{A\left(  0,2^{r}\right)  }\left\vert \mathbb{E}_{\mathbb{G}%
}\mathcal{E}M_{\psi_{1}}\left(  A_{1}+B_{1}\right)  \left(  \xi\right)
\mathbb{E}_{\mathbb{G}}\mathcal{E}M_{\psi_{2}}\left(  A_{2}+B_{2}\right)
\left(  \xi\right)  \right\vert ^{\frac{q}{2}}\ d\xi\right)  ^{\frac{2}{q}%
}\nonumber\\
& \lesssim S_{\operatorname*{main}}+S_{\operatorname*{absorb}}%
+S_{\operatorname*{error}}\ ,\nonumber
\end{align}
where%
\begin{align*}
S_{\operatorname*{main}}  & \equiv\left(  \int_{A\left(  0,2^{r}\right)
}\left\vert \mathbb{E}_{\mathbb{G}}\mathcal{E}M_{\psi_{1}}A_{1}\left(
\xi\right)  \mathbb{E}_{\mathbb{G}}\mathcal{E}M_{\psi_{2}}A_{2}\left(
\xi\right)  \right\vert ^{\frac{q}{2}}\ d\xi\right)  ^{\frac{2}{q}},\\
S_{\operatorname*{absorb}}  & \equiv\sum_{\left(  E_{1},E_{2}\right)
\in\left\{  \left(  B_{1},A_{2}\right)  ,\left(  A_{1},B_{2}\right)  \right\}
}\left(  \int_{A\left(  0,2^{r}\right)  }\left\vert \mathbb{E}_{\mathbb{G}%
}\mathcal{E}M_{\psi_{1}}E_{1}\left(  \xi\right)  \mathbb{E}_{\mathbb{G}%
}\mathcal{E}M_{\psi_{2}}E_{2}\left(  \xi\right)  \right\vert ^{\frac{q}{2}%
}\ d\xi\right)  ^{\frac{2}{q}}\\
S_{\operatorname*{error}}  & \equiv\left(  \int_{A\left(  0,2^{r}\right)
}\left\vert \mathbb{E}_{\mathbb{G}}\mathcal{E}M_{\psi_{1}}B_{1}\left(
\xi\right)  \mathbb{E}_{\mathbb{G}}\mathcal{E}M_{\psi_{2}}B_{2}\left(
\xi\right)  \right\vert ^{\frac{q}{2}}\ d\xi\right)  ^{\frac{2}{q}}.
\end{align*}
We recall that $\operatorname*{Supp}\psi_{j}\subset U_{j}^{\ast}$ and $U_{j}$
is an interior grandchild of $U_{j}^{\ast}$, and from (\ref{f decomp}) we
have
\begin{align*}
f  & =\sum_{I_{j}\in\mathcal{G}}\bigtriangleup_{I_{j};\kappa}^{\eta}%
f=A_{j}+B_{j}\text{ and }A_{j}\equiv\sum_{I\in\mathcal{G}^{\ast}\left[
U_{j}^{\ast}\right]  }\bigtriangleup_{I;\kappa}^{\eta}f_{j}\text{ and }%
B_{j}\equiv\sum_{I\in\mathcal{G\setminus G}^{\ast}\left[  U_{j}^{\ast}\right]
}\bigtriangleup_{I;\kappa}^{\eta}f_{j}\\
\text{and }\mathcal{G}^{\ast}\left[  U\right]   & \equiv\left\{
I\in\mathcal{G}:\ I_{\eta}\cap U\neq\emptyset\text{ and }\ell\left(  I\right)
\leq\ell\left(  U\right)  \right\}  \ .
\end{align*}

Now we\ write $\nu_{j}=2^{-v_{j}}$ and use%
\[
M_{\psi_{1}}\sum_{I\in\mathcal{G}^{\ast}\left[  U_{1}^{\ast}\right]
}\bigtriangleup_{I;\kappa}^{\eta}f=M_{\psi_{1}}\sum_{s_{1}\geq v_{1}%
}\mathsf{Q}_{s_{1}}^{\eta,\mathcal{G}}f,
\]
to write, keeping in mind Remark \ref{think}, that says we may think of
$v_{j}$ as being $0$,%
\begin{align*}
& S_{\operatorname*{main}}=\left(  \int_{A\left(  0,2^{r}\right)  }\left\vert
\mathbb{E}_{\mathbb{G}}\mathcal{E}M_{\psi_{1}}A_{1}\left(  \xi\right)
\mathbb{E}_{\mathbb{G}}\mathcal{E}M_{\psi_{2}}A_{2}\left(  \xi\right)
\right\vert ^{\frac{q}{2}}\ d\xi\right)  ^{\frac{2}{q}}\\
& =\left(  \int_{A\left(  0,2^{r}\right)  }\left\vert \mathbb{E}_{\mathbb{G}%
}\mathcal{E}M_{\psi_{1}}\sum_{s_{1}\geq0}\mathsf{Q}_{s_{1}}^{\eta,\mathcal{G}%
}f\left(  \xi\right)  \mathbb{E}_{\mathbb{G}}\mathcal{E}M_{\psi_{2}}%
\sum_{s_{2}\geq0}\mathsf{Q}_{s_{2}}^{\eta,\mathcal{G}}f\left(  \xi\right)
\right\vert ^{\frac{q}{2}}\ d\xi\right)  ^{\frac{2}{q}}\\
& \leq\sum_{s_{1}\geq0}\sum_{s_{2}\geq0}\left(  \int_{A\left(  0,2^{r}\right)
}\left\vert \mathbb{E}_{\mathbb{G}}\mathcal{E}M_{\psi_{1}}\mathsf{Q}_{s_{1}%
}^{\eta,\mathcal{G}}f\left(  \xi\right)  \mathbb{E}_{\mathbb{G}}%
\mathcal{E}M_{\psi_{2}}\mathsf{Q}_{s_{2}}^{\eta,\mathcal{G}}f\left(
\xi\right)  \right\vert ^{\frac{q}{2}}\ d\xi\right)  ^{\frac{2}{q}}.
\end{align*}
Thus by (\ref{first half}), (\ref{second half}) and (\ref{third half}) in
Cases (\textbf{1}), (\textbf{2}) and (\textbf{3}) above, we have
\begin{align*}
& \left\{  \sum_{\delta r<s_{1}\leq s_{2}\text{ and }\frac{r}{1+\delta}%
<s_{2}<\frac{r}{1-\delta}}+\sum_{0\leq s_{1}\leq s_{2}\text{ and }s_{2}%
>\frac{r}{1-\delta}}+\sum_{0\leq s_{1}\leq s_{2}\text{ and }s_{2}<\frac
{r}{1+\delta}}+\sum_{0\leq s_{1}<\delta r\text{ and }\frac{r}{1+\delta}%
<s_{2}<\frac{r}{1-\delta}}\right\} \\
& \ \ \ \ \ \ \ \ \ \ \ \ \ \ \ \ \ \ \ \ \times\left\Vert \mathbb{E}%
_{\mathbb{G}}\mathcal{E}M_{\psi_{1}}\mathsf{Q}_{s_{1}}^{\eta}f\mathbb{E}%
_{\mathbb{G}}\mathcal{E}M_{\psi_{2}}\mathsf{Q}_{s_{2}}^{\eta}f\right\Vert
_{L^{\frac{q}{2}}\left(  A\left(  0,2^{r}\right)  \right)  }\\
& \ \ \ \ \ \ \ \ \ \ \lesssim2^{\delta r}\left\Vert f_{1}\right\Vert
_{L^{\infty}}\left(  \left\Vert f_{2}\right\Vert _{L^{\infty}}+\left\Vert
\mathbb{E}_{\mathbb{G}}\mathcal{E}M_{\psi_{2}}\mathsf{Q}_{s_{2}}^{\eta
}f\right\Vert _{L^{q}\left(  A\left(  0,2^{r}\right)  \right)  }\right)  \ ,
\end{align*}
and hence%
\begin{align}
& \ \ \ \ \ \ \ \ \ \ \ \ \ \ \ \ \ \ \ \ S_{\operatorname*{main}}%
\label{SAA}\\
& \lesssim\left\{  \sum_{\delta r<s_{1}\leq s_{2}\text{ and }\frac{r}%
{1+\delta}<s_{2}<\frac{r}{1-\delta}}+\sum_{\delta r\leq s_{1}\leq s_{2}\text{
and }s_{2}>\frac{r}{1-\delta}}+\sum_{0\leq s_{1}\leq s_{2}\text{ and }%
s_{2}<\frac{r}{1+\delta}}+\sum_{0\leq s_{1}<\delta r\text{ and }\frac
{r}{1+\delta}<s_{2}<\frac{r}{1-\delta}}\right\} \nonumber\\
& \ \ \ \ \ \ \ \ \ \ \ \ \ \ \ \times\left(  \int_{A\left(  0,2^{r}\right)
}\left\vert \mathbb{E}_{\mathbb{G}}\mathcal{E}M_{\psi_{1}}\mathsf{Q}_{s_{1}%
}^{\eta}f\left(  \xi\right)  \ \mathbb{E}_{\mathbb{G}}\mathcal{E}M_{\psi_{2}%
}\mathsf{Q}_{s_{2}}^{\eta}f\left(  \xi\right)  \right\vert ^{\frac{q}{2}%
}\ d\xi\right)  ^{\frac{2}{q}}\nonumber\\
& \lesssim\sum_{\delta r<s_{1}\leq s_{2}\text{ and }\frac{r}{1+\delta}\leq
s_{2}\leq\frac{r}{1-\delta}}\left(  \int_{A\left(  0,2^{r}\right)  }\left\vert
\mathbb{E}_{\mathbb{G}}\mathcal{E}M_{\psi_{1}}\mathsf{Q}_{s_{1}}^{\eta
}f\left(  \xi\right)  \ \mathbb{E}_{\mathbb{G}}\mathcal{E}M_{\psi_{2}%
}\mathsf{Q}_{s_{2}}^{\eta}f\left(  \xi\right)  \right\vert ^{\frac{q}{2}%
}\ d\xi\right)  ^{\frac{2}{q}}\nonumber\\
& \ \ \ \ \ \ \ \ \ \ \ \ \ \ \ +2^{d\delta r}\left\Vert f_{1}\right\Vert
_{L^{\infty}}\left(  \left\Vert f_{2}\right\Vert _{L^{\infty}}+\sup_{\frac
{r}{1+\delta}\leq s_{2}\leq\frac{r}{1-\delta}}\left\Vert \mathbb{E}%
_{\mathbb{G}}\mathcal{E}M_{\psi_{2}}\mathsf{Q}_{s_{2}}^{\eta}f\right\Vert
_{L^{q}\left(  A\left(  0,2^{r}\right)  \right)  }\right)  .\nonumber
\end{align}

Next we show that the error term%
\[
S_{\operatorname*{error}}=\left(  \int_{A\left(  0,2^{r}\right)  }\left\vert
\mathbb{E}_{\mathbb{G}}\mathcal{E}M_{\psi_{1}}\sum_{I\in\mathcal{G\setminus
G}^{\ast}\left[  U_{1}^{\ast}\right]  }\bigtriangleup_{I;\kappa}^{\eta
}f\left(  \xi\right)  \mathbb{E}_{\mathbb{G}}\mathcal{E}M_{\psi_{2}}\sum
_{I\in\mathcal{G\setminus G}^{\ast}\left[  U_{2}^{\ast}\right]  }%
\bigtriangleup_{I;\kappa}^{\eta}f\left(  \xi\right)  \right\vert ^{\frac{q}%
{2}}\ d\xi\right)  ^{\frac{2}{q}}\ ,
\]
satisfies%
\begin{equation}
S_{\operatorname*{error}}\lesssim\left\Vert f\right\Vert _{L^{\infty}}%
^{2}\ .\label{SBBB}%
\end{equation}
To see this we apply the argument proving the reduction to a truncated
inequality in \cite[Subsubsection 1.4.1]{Saw7}. Here are the
details\footnote{The setting is probabilistic in \cite[Subsubsection
1.4.1]{Saw7}, and so we repeat all the details for the determinisitic case
here.}.

Recall $\mathcal{G}^{\ast}\left[  U\right]  \equiv\left\{  I\in\mathcal{G}%
:\ I_{\eta}\cap U\neq\emptyset\text{ and }\ell\left(  I\right)  \leq
\ell\left(  U\right)  \right\}  $. Using the formulas for $A_{j}$ and $B_{j}$
we write%
\[
M_{\psi_{j}}f=M_{\psi_{j}}\sum_{I\in\mathcal{G}}\bigtriangleup_{I;\kappa
}^{\eta}f=M_{\psi_{j}}\sum_{I\in\mathcal{G}^{\ast}\left[  U_{j}^{\ast}\right]
}\bigtriangleup_{I;\kappa}^{\eta}f+M_{\psi_{j}}\sum_{k=0}^{\infty}\sum
_{I\in\mathcal{N}\left(  \pi^{\left(  k\right)  }U_{j}^{\ast}\right)
}\bigtriangleup_{I;\kappa}^{\eta}f=M_{\psi_{j}}L_{j}^{1}+M_{\psi_{j}}L_{j}%
^{2},
\]
since
\[
B_{j}=\sum_{I\in\mathcal{G\setminus G}^{\ast}\left[  U_{j}^{\ast}\right]
}\bigtriangleup_{I;\kappa}^{\eta}f_{j}=\sum_{k=0}^{\infty}\sum_{I\in
\mathcal{N}\left(  \pi^{\left(  k\right)  }U_{j}^{\ast}\right)  }%
\bigtriangleup_{I;\kappa}^{\eta}f.
\]
We will next show that%
\begin{equation}
\left\Vert TB_{j}\right\Vert _{L^{q}}=\left\Vert TL_{j}^{2}f\right\Vert
_{L^{q}}=\left\Vert TM_{\psi_{j}}\sum_{k=1}^{\infty}\left(  \sum
_{I\in\mathcal{N}\left(  \pi^{\left(  k\right)  }U_{j}^{\ast}\right)
}\bigtriangleup_{I;\kappa}^{\eta}f\right)  \right\Vert _{L^{q}}\lesssim
\left\Vert f\right\Vert _{L^{\infty}\left(  B\left(  0,1\right)  \right)
}\ ,\label{two ineq}%
\end{equation}
which will prove (\ref{SBBB}) upon using H\"{o}lder's inequality $\left\Vert
TB_{1}\ TB_{2}\right\Vert _{L^{\frac{q}{2}}}\leq\left\Vert TB_{1}\right\Vert
_{L^{q}}\left\Vert TB_{2}\right\Vert _{L^{q}}$.

So let $R_{j}^{\ast}$ be the rectangle $U_{j}^{\ast}\times\left[  0,2\right]
$. Then,
\begin{align*}
& \left\Vert TB_{j}\right\Vert _{L^{q}}=\left\Vert TM_{\psi_{j}}\sum
_{k=1}^{\infty}\sum_{I\in\mathcal{N}\left(  \pi^{\left(  k\right)  }%
U_{j}^{\ast}\right)  }\bigtriangleup_{I;\kappa}^{\eta}f\right\Vert _{L^{q}%
}=\left\Vert \mathcal{F}\Phi_{\ast}M_{\psi_{j}}L_{j}^{2}f\right\Vert _{L^{q}%
}=\left\Vert \mathcal{F}\Phi_{\ast}M_{\mathbf{1}_{U_{j}^{\ast}}}M_{\psi_{j}%
}L_{j}^{2}f\right\Vert _{L^{q}}\\
& =\left\Vert \mathcal{F}M_{\mathbf{1}_{R_{j}^{\ast}}}\Phi_{\ast}\left(
\psi_{j}L_{2}f\right)  \right\Vert _{L^{q}}=\left\Vert \left(  \mathcal{F}%
M_{\mathbf{1}_{R_{j}^{\ast}}}\mathcal{F}^{-1}\right)  \mathcal{F}\Phi_{\ast
}\psi_{j}L_{2}f\right\Vert _{L^{q}}\lesssim\left\Vert \mathcal{F}\Phi_{\ast
}\psi_{j}L_{2}f\right\Vert _{L^{q}}=\left\Vert T\psi_{j}L_{2}f\right\Vert
_{L^{q}}\ ,
\end{align*}
where%
\[
\psi_{j}L_{2}f=\sum_{k=1}^{\infty}\sum_{I\in\mathcal{N}\left(  \pi^{\left(
k\right)  }U_{0}\right)  }\psi_{j}\bigtriangleup_{I;\kappa}^{n-1,\eta}%
f=\sum_{k=1}^{\infty}\sum_{I\in\mathcal{N}\left(  \pi^{\left(  k\right)
}U_{0}\right)  }\left\langle \left(  S_{\kappa,\eta}\right)  ^{-1}%
f,h_{I;\kappa}\right\rangle \psi_{j}h_{I;\kappa}^{\eta}\ .
\]
Thus we see that $\psi_{j}L_{2}f$ is smooth at scale $1$, compactly supported
and bounded by $\left\Vert f\right\Vert _{L^{p}}$, upon using that
(\textbf{i}) the functions $\psi_{j}h_{I;\kappa}^{\eta}$ are smooth and
compactly supported uniformly in $k$, and that (\textbf{ii}) we have the
pointwise inquality,
\begin{align*}
& \left\vert \sum_{k=1}^{\infty}\sum_{I\in\mathcal{N}\left(  \pi^{\left(
k\right)  }U_{0}\right)  }\left\langle \left(  S_{\kappa,\eta}\right)
^{-1}f,h_{I;\kappa}\right\rangle \psi_{j}h_{I;\kappa}^{\eta}\right\vert
\lesssim\left\Vert \psi_{j}\right\Vert _{L^{\infty}}\sum_{k=1}^{\infty
}\left\Vert \left(  S_{\kappa,\eta}\right)  ^{-1}f\right\Vert _{L^{p}%
}\left\Vert h_{I;\kappa}^{\eta}\right\Vert _{L^{\infty}}\left\Vert
h_{I;\kappa}\right\Vert _{L^{p^{\prime}}}\\
& \lesssim\left\Vert f\right\Vert _{L^{p}}\sum_{k=0}^{\infty}2^{-k\frac
{d-1}{2}}\left(  2^{-k\frac{n-1}{2}p^{\prime}}2^{k\left(  n-1\right)
}\right)  ^{\frac{1}{p^{\prime}}}\lesssim\left\Vert f\right\Vert _{L^{p}}%
\sum_{k=0}^{\infty}2^{-k\frac{n-1}{p}}\lesssim\left\Vert f\right\Vert _{L^{p}%
},
\end{align*}
for any $1<p<\infty$.

Consequently, the Fourier transform $\widehat{\Phi_{\ast}\left(  \psi_{j}%
L_{j}^{2}f\right)  }$ of the smooth surface measure $\Phi_{\ast}\left(
\psi_{j}L_{j}^{2}f\right)  $ has decay
\[
\left\vert \widehat{\Phi_{\ast}\left(  \psi_{j}L_{j}^{2}f\right)  }\left(
\xi\right)  \right\vert \lesssim\left\Vert \psi_{j}\right\Vert _{C^{\frac
{n}{2}+2}}\left\Vert f\right\Vert _{L^{p}}\left(  1+\left\vert \xi\right\vert
\right)  ^{-\frac{n-1}{2}},
\]
by e.g. \cite[Theorem 1 page 348]{Ste2} or Theorem 29 in \cite{Saw7}. Since
this function is in $L^{q}\left(  \mathbb{R}^{n}\right)  $ for all
$q>\frac{2n}{n-1}$, it follows that%
\[
\left\Vert TM_{\psi_{j}}L_{j}^{2}f\right\Vert _{L^{q}}\lesssim C_{p,q}%
\left\Vert f\right\Vert _{L^{p}\left(  U\right)  },\ \ \ \ \text{\ for }p>1,
\]
which is (\ref{two ineq}). As already mentioned, this then proves (\ref{SBBB}).

Finally we turn to estimating the absorbable term $S_{\operatorname*{absorb}}%
$, and by symmetry it is enough to estimate the term%
\[
S_{\operatorname*{absorb}}\left(  BA\right)  \equiv\left(  \int_{A\left(
0,2^{r}\right)  }\left\vert \mathbb{E}_{\mathbb{G}}\mathcal{E}M_{\psi_{1}%
}B_{1}\left(  \xi\right)  \mathbb{E}_{\mathbb{G}}\mathcal{E}M_{\psi_{2}}%
A_{2}\left(  \xi\right)  \right\vert ^{\frac{q}{2}}\ d\xi\right)  ^{\frac
{2}{q}},
\]
where the factor $M_{\psi_{1}}B_{1}$ is smooth at scale roughly $1$. Applying
(\ref{third half}) in Case (\textbf{3}) above, we can reduce to the case where
$f$ is replaced by $\mathsf{Q}_{s_{2}}^{\eta}f$ with $\frac{r}{1+\delta}\leq
s_{2}\leq\frac{r}{1-\delta}$ since $M_{\psi_{1}}B_{1}$ corresponds to $s=0$ -
see the corresponding part of the argument used for $S_{\operatorname*{main}}$
above. Then by the Cauchy-Schwarz inequality, we have%
\begin{align*}
S_{\operatorname*{absorb}}\left(  BA\right)   & \lesssim\left(  \int_{A\left(
0,2^{r}\right)  }\left\vert \mathbb{E}_{\mathbb{G}}\mathcal{E}M_{\psi_{1}%
}B_{1}\left(  \xi\right)  \right\vert ^{q}d\xi\right)  ^{\frac{1}{q}}\left(
\int_{A\left(  0,2^{r}\right)  }\left\vert \mathbb{E}_{\mathbb{G}}%
\mathcal{E}M_{\psi_{2}}A_{2}\left(  \xi\right)  \right\vert ^{q}d\xi\right)
^{\frac{1}{q}}\\
& \lesssim C_{p,q}\left\Vert f\right\Vert _{L^{\infty}\left(  U\right)
}\left(  \int_{A\left(  0,2^{r}\right)  }\left\vert \mathbb{E}_{\mathbb{G}%
}\mathcal{E}M_{\psi_{2}}A_{2}\left(  \xi\right)  \right\vert ^{q}d\xi\right)
^{\frac{1}{q}},
\end{align*}
where the final inequality follows from (\ref{two ineq}).

Combining the estimates for $S_{\operatorname*{main}}$,
$S_{\operatorname*{error}}$\ and $S_{\operatorname*{absorb}}$ shows that%
\begin{align}
& \left(  \int_{A\left(  0,R\right)  }\left\vert \mathbb{E}_{\mathbb{G}%
}\mathcal{E}M_{\psi_{1}}f\left(  \xi\right)  \ \mathbb{E}_{\mathbb{G}%
}\mathcal{E}M_{\psi_{2}}f\left(  \xi\right)  \right\vert ^{\frac{q}{2}}%
\ d\xi\right)  ^{\frac{2}{q}}=S_{\operatorname*{main}}%
+S_{\operatorname*{absorb}}+S_{\operatorname*{error}}\label{end}\\
& \lesssim\sum_{\delta r<s_{1}\leq s_{2}\text{ and }\frac{r}{1+\delta}\leq
s_{2}\leq\frac{r}{1-\delta}}\left(  \int_{A\left(  0,2^{r}\right)  }\left\vert
\mathbb{E}_{\mathbb{G}}\mathcal{E}M_{\psi_{1}}\mathsf{Q}_{s_{1}}^{\eta
}f\left(  \xi\right)  \ \mathbb{E}_{\mathbb{G}}\mathcal{E}M_{\psi_{2}%
}\mathsf{Q}_{s_{2}}^{\eta}f\left(  \xi\right)  \right\vert ^{\frac{q}{2}%
}\ d\xi\right)  ^{\frac{2}{q}}\nonumber\\
& +2^{d\delta r}\left(  \left\Vert f\right\Vert _{L^{\infty}}+\sup_{\frac
{r}{1+\delta}\leq s_{2}\leq\frac{r}{1-\delta}}\left\Vert \mathbb{E}%
_{\mathbb{G}}\mathcal{E}M_{\psi_{2}}\mathsf{Q}_{s_{2}}^{\eta}f\right\Vert
_{L^{q}\left(  A\left(  0,2^{r}\right)  \right)  }\right)  ,\nonumber
\end{align}
where all of the multipliers on the right hand side are standard, and not yet
discrete. This completes the proof of (\ref{suff cutoff}) after dividing
through by $\left\Vert f_{1}\right\Vert _{L^{\infty}}\left\Vert f_{2}%
\right\Vert _{L^{\infty}}$ and taking the supremum over $\left\Vert
f_{j}\right\Vert _{L^{\infty}}\leq1$.

\subsection{The good lambda inequality}

We will now use (\ref{suff cutoff}), together with a good lambda inequality,
to show that for $R\geq1$,%
\begin{equation}
\mathfrak{N}_{R}^{\left(  q\right)  }\lesssim R^{\varepsilon}\otimes
_{2}\Lambda_{\delta}\left(  R\right)  \lesssim R^{\varepsilon}\delta
r\otimes_{2}\Lambda_{\delta}^{\ast}\left(  R\right)  +R^{\varepsilon}%
R^{\frac{d\delta}{2}}\delta r\sqrt{1+\Lambda_{\delta}^{\ast}\left(  R\right)
},\label{will show}%
\end{equation}
where the first inequality has already been proved in (\ref{important}). So
fix a pair $\left(  s_{1},s_{2}\right)  $ satisfying $\delta r\leq s_{1}\leq
s_{2}$ and $\frac{r}{1+\delta}\leq s_{2}\leq\frac{r}{1-\delta}$, so that we
have
\begin{align}
& \sup_{\left\Vert f\right\Vert _{L^{\infty}}\leq1}\left\Vert \prod_{j=1}%
^{2}\mathbb{E}_{\mathbb{G}}\mathcal{E}M_{\psi_{j}}\mathsf{Q}_{s_{j}}%
^{\eta,\mathcal{G}}f\right\Vert _{L^{\frac{q}{2}}\left(  A\left(  0,R\right)
\right)  }^{\frac{1}{2}}\leq C\sup_{\left\Vert f\right\Vert _{L^{\infty}}%
\leq1}\left\Vert \prod_{j=1}^{2}\mathbb{E}_{\mathbb{G}}\mathcal{E}M_{\psi_{j}%
}^{\mathcal{G}}\mathsf{Q}_{s_{j}}^{\eta,\mathcal{G}}f\right\Vert _{L^{\frac
{q}{2}}\left(  A\left(  0,R\right)  \right)  }^{\frac{1}{2}}\label{satis}\\
& +C\sup_{\left\Vert f\right\Vert _{L^{\infty}}\leq1}\left\Vert \left\{
\left(  \mathbb{E}_{\mathbb{G}}a_{1}^{s_{1}}-\mathbb{E}_{\mathbb{G}}%
b_{1}^{s_{1}}\right)  \mathbb{E}_{\mathbb{G}}a_{2}^{s_{2}}+\mathbb{E}%
_{\mathbb{G}}b_{1}^{s_{1}}\left(  \mathbb{E}_{\mathbb{G}}a_{2}^{s_{2}%
}-\mathbb{E}_{\mathbb{G}}b_{2}^{s_{2}}\right)  \right\}  \right\Vert
_{L^{\frac{q}{2}}\left(  A\left(  0,R\right)  \right)  }^{\frac{1}{2}%
}+C\nonumber\\
& \leq C\otimes_{2}^{\mathbb{E}}\Lambda_{\delta}^{\ast}\left(  R\right)
+C\sup_{\left\Vert f\right\Vert _{L^{\infty}}\leq1}\left\Vert \left\{  \left(
\mathbb{E}_{\mathbb{G}}a_{1}^{s_{1}}-\mathbb{E}_{\mathbb{G}}b_{1}^{s_{1}%
}\right)  \mathbb{E}_{\mathbb{G}}a_{2}^{s_{2}}+\mathbb{E}_{\mathbb{G}}%
b_{1}^{s_{1}}\left(  \mathbb{E}_{\mathbb{G}}a_{2}^{s_{2}}-\mathbb{E}%
_{\mathbb{G}}b_{2}^{s_{2}}\right)  \right\}  \right\Vert _{L^{\frac{q}{2}%
}\left(  A\left(  0,R\right)  \right)  }^{\frac{1}{2}}+C,\nonumber
\end{align}
where%
\begin{align*}
a_{j}^{s_{j}}  & =\mathcal{E}M_{\psi_{j}}\mathsf{Q}_{s_{j}}^{\eta,\mathcal{G}%
}f\text{ and }b_{j}^{s_{j}}=\mathcal{E}M_{\psi_{j}}^{\mathcal{G}}%
\mathsf{Q}_{s_{j}}^{\eta,\mathcal{G}}f,\\
\text{and }a_{j}^{s_{j}}  & =b_{j}^{s_{j}}+\left(  a_{j}^{s_{j}}-b_{j}^{s_{j}%
}\right)  ,
\end{align*}
and where we have used%
\[
\mathbb{E}_{\mathbb{G}}a_{1}^{s_{1}}\mathbb{E}_{\mathbb{G}}a_{2}^{s_{2}%
}-\mathbb{E}_{\mathbb{G}}b_{1}^{s_{1}}\mathbb{E}_{\mathbb{G}}b_{2}^{s_{2}%
}=\left(  \mathbb{E}_{\mathbb{G}}a_{1}^{s_{1}}-\mathbb{E}_{\mathbb{G}}%
b_{1}^{s_{1}}\right)  \mathbb{E}_{\mathbb{G}}a_{2}^{s_{2}}+\mathbb{E}%
_{\mathbb{G}}b_{1}^{s_{1}}\left(  \mathbb{E}_{\mathbb{G}}a_{2}^{s_{2}%
}-\mathbb{E}_{\mathbb{G}}b_{2}^{s_{2}}\right)  ,
\]
which allows us to exploit the small size $2^{-\frac{1}{2}s_{j}}$ of
$a_{j}^{s_{j}}-b_{j}^{s_{j}}$ provided $s_{j}\geq\delta r$ (see below).

Now we use Minkowski's inequality to estimate%
\begin{align}
& \left\Vert \left\{  \left(  \mathbb{E}_{\mathbb{G}}a_{1}^{s_{1}}%
-\mathbb{E}_{\mathbb{G}}b_{1}^{s_{1}}\right)  \mathbb{E}_{\mathbb{G}}%
a_{2}^{s_{2}}+\mathbb{E}_{\mathbb{G}}b_{1}^{s_{1}}\left(  \mathbb{E}%
_{\mathbb{G}}a_{2}^{s_{2}}-\mathbb{E}_{\mathbb{G}}b_{2}^{s_{2}}\right)
\right\}  \right\Vert _{L^{\frac{q}{2}}\left(  A\left(  0,R\right)  \right)
}^{\frac{1}{2}}\label{summands}\\
& \lesssim\left\Vert \left(  \mathbb{E}_{\mathbb{G}}a_{1}^{s_{1}}%
-\mathbb{E}_{\mathbb{G}}b_{1}^{s_{1}}\right)  \mathbb{E}_{\mathbb{G}}%
a_{2}^{s_{2}}\right\Vert _{L^{\frac{q}{2}}\left(  A\left(  0,R\right)
\right)  }^{\frac{1}{2}}+\left\Vert \mathbb{E}_{\mathbb{G}}b_{1}^{s_{1}%
}\left(  \mathbb{E}_{\mathbb{G}}a_{2}^{s_{2}}-\mathbb{E}_{\mathbb{G}}%
b_{2}^{s_{2}}\right)  \right\Vert _{L^{\frac{q}{2}}\left(  A\left(
0,R\right)  \right)  }^{\frac{1}{2}},\nonumber
\end{align}
and for the moment we will concentrate on the first summand. If we choose a
bump function $\psi_{j}^{\ast}$ satisfying $\psi_{j}^{\ast}\equiv1$ on
$\operatorname*{Supp}\psi_{j}$, then we have%
\begin{align*}
\mathbb{E}_{\mathbb{G}}a_{j}^{s_{j},\mathcal{G}}-\mathbb{E}_{\mathbb{G}}%
b_{j}^{s_{j},\mathcal{G}}  & =\mathbb{E}_{\mathbb{G}}\mathcal{E}\left(
M_{\psi_{j}}-M_{\psi_{j}}^{\mathcal{G}}\right)  \mathsf{Q}_{s_{j}}%
^{\eta,\mathcal{G}}f=\mathbb{E}_{\mathbb{G}}\mathcal{E}\psi_{j}^{\ast}\left(
M_{\psi_{j}}-M_{\psi_{j}}^{\mathcal{G}}\right)  \mathsf{Q}_{_{s}j}%
^{\eta,\mathcal{G}}f\\
& =\mathbb{E}_{\mathbb{G}}\mathcal{E}M_{\psi_{j}^{\ast}}g_{j}^{s_{j}%
,\mathcal{G}}=C2^{-s_{j}}\mathbb{E}_{\mathbb{G}}\mathcal{E}M_{\psi_{j}^{\ast}%
}h_{j}^{s_{j},\mathcal{G}}%
\end{align*}
where $g_{j}^{s_{j},\mathcal{G}}\equiv\left(  M_{\psi_{j}}-M_{\psi_{j}%
}^{\mathcal{G}}\right)  \mathsf{Q}_{s_{j}}^{\eta,\mathcal{G}}f$ satisfies
$\left\Vert g_{j}^{s_{j},\mathcal{G}}\right\Vert _{L^{\infty}}\leq C2^{-s_{j}%
}\left\Vert f\right\Vert _{L^{\infty}}$, and $h_{j}^{s_{j,\mathcal{G}}}%
=\frac{1}{C}2^{s_{j}}g_{j}^{s_{j}}$ has $L^{\infty}$ norm at most $1$. We also
have%
\begin{equation}
a_{j}^{s_{j},\mathcal{G}}=\mathbb{E}_{\mathbb{G}}\mathcal{E}M_{\psi_{j}%
}\mathsf{Q}_{s_{j}}^{\eta,\mathcal{G}}f=\mathbb{E}_{\mathbb{G}}C\mathcal{E}%
M_{\psi_{j}}k_{j}^{s_{j}}\text{ and }b_{j}^{s_{j},\mathcal{G}}=\mathbb{E}%
_{\mathbb{G}}\mathcal{E}M_{\psi_{j}}^{\mathcal{G}}\mathsf{Q}_{s_{j}}%
^{\eta,\mathcal{G}}f=\mathbb{E}_{\mathbb{G}}\mathcal{E}M_{\psi_{j}^{\ast}%
}M_{\psi_{j}}^{\mathcal{G}}\mathsf{Q}_{s_{j}}^{\eta,\mathcal{G}}%
f=\mathbb{E}_{\mathbb{G}}\mathcal{E}M_{\psi_{j}^{\ast}}k_{j}^{s_{j}%
,\mathcal{G}}\ ,\label{ab formual}%
\end{equation}
where $k_{j}^{s_{j},\mathcal{G}}=\frac{1}{C}\mathsf{Q}_{s_{j}}^{\eta
,\mathcal{G}}f$ has $L^{\infty}$ norm at most $1$. Finally we note that the
function%
\[
f^{\mathcal{G}}\equiv\left\{
\begin{array}
[c]{ccc}%
h_{1}^{s_{1},\mathcal{G}} & \text{ on } & \operatorname*{Supp}\psi_{1}\\
k_{2}^{s_{2},\mathcal{G}} & \text{ on } & \operatorname*{Supp}\psi_{2}%
\end{array}
\right.
\]
has $L^{\infty}$ norm at most $1$, and so%
\begin{align*}
\left\Vert \left(  \mathbb{E}_{\mathbb{G}}a_{1}^{s_{1}}-\mathbb{E}%
_{\mathbb{G}}b_{1}^{s_{1}}\right)  \mathbb{E}_{\mathbb{G}}a_{2}^{s_{2}%
}\right\Vert _{L^{\frac{q}{3}}\left(  A\left(  0,R\right)  \right)  }%
^{\frac{1}{3}}  & =\left\Vert \mathbb{E}_{\mathbb{G}}C2^{-s}\mathcal{E}%
M_{\psi_{1}^{\ast}}h_{1}^{s_{1}}\ \mathbb{E}_{\mathbb{G}}C\mathcal{E}%
M_{\psi_{2}}k_{2}^{s_{2}}\right\Vert _{L^{\frac{q}{2}}\left(  A\left(
0,R\right)  \right)  }^{\frac{1}{2}}\\
& =C2^{-\frac{1}{2}s_{1}}\left\Vert \mathbb{E}_{\mathbb{G}}\mathcal{E}%
M_{\psi_{1}^{\ast}}f^{\mathcal{G}}\ \mathbb{E}_{\mathbb{G}}\mathcal{E}%
M_{\psi_{2}}f\right\Vert _{L^{\frac{q}{2}}\left(  A\left(  0,R\right)
\right)  }^{\frac{1}{2}}.
\end{align*}

Now we further specialize the bump function $\psi_{j}^{\ast}$ to have the form
$\psi_{j}^{\ast}=\sum_{i\ \operatorname*{finite}}\psi_{j}^{i}$ where each
$\psi_{j}^{i}$ is a translate of $\psi_{j}$, and satisfies $\psi_{j}^{\ast
}\equiv1$ on $\operatorname*{Supp}\psi_{j}$. Then we get for $R\geq A$,%
\begin{align*}
\sup_{\left\Vert f\right\Vert _{L^{\infty}}\leq1}\left\Vert \left(
\mathbb{E}_{\mathbb{G}}a_{1}^{s_{1}}-\mathbb{E}_{\mathbb{G}}b_{1}^{s_{1}%
}\right)  \mathbb{E}_{\mathbb{G}}a_{2}^{s_{2}}\right\Vert _{L^{\frac{q}{2}%
}\left(  A\left(  0,R\right)  \right)  }^{\frac{1}{2}}  & \leq C\sup
_{\left\Vert f\right\Vert _{L^{\infty}}\leq1}2^{-\frac{1}{2}s_{1}}\left\Vert
\left\{  \mathbb{E}_{\mathbb{G}}\mathcal{E}\left(  \sum
_{i\ \operatorname*{finite}}M_{\psi_{1}^{i}}f^{\mathcal{G}}\right)
\ \mathbb{E}_{\mathbb{G}}\mathcal{E}M_{\psi_{2}}f^{\mathcal{G}}\right\}
\right\Vert _{L^{\frac{q}{2}}\left(  A\left(  0,R\right)  \right)  }^{\frac
{1}{2}}\\
& \leq C2^{-\frac{1}{2}s_{1}}\sup_{\left\Vert f\right\Vert _{L^{\infty}}\leq
1}\sum_{i\ \operatorname*{finite}}\left\Vert \mathbb{E}_{\mathbb{G}%
}\mathcal{E}M_{\psi_{1}^{i}}f^{\mathcal{G}}\ \mathbb{E}_{\mathbb{G}%
}\mathcal{E}M_{\psi_{2}}f^{\mathcal{G}}\right\Vert _{L^{\frac{q}{2}}\left(
A\left(  0,R\right)  \right)  }^{\frac{1}{2}}.
\end{align*}
Now we note that by the translation and dilation invariance (\ref{dil''}) in
the appendix, we may dilate and translate the functions $M_{\psi_{1}^{i}%
}f^{\mathcal{G}}$ and $M_{\psi_{2}}f^{\mathcal{G}}$\ in the product
$\mathcal{E}M_{\psi_{1}^{i}}f^{\mathcal{G}}\ \mathcal{E}M_{\psi_{2}%
}f^{\mathcal{G}}$ by $\delta_{t}\tau_{c}$ with $c\leq t\leq1$ if necessary, so
that both $\psi_{1}^{i}$ and $\psi_{2}$ have support in $U$ and are still
separated by $\upsilon$. Hence%
\begin{equation}
\left\Vert \mathbb{E}_{\mathbb{G}}\mathcal{E}M_{\psi_{1}^{i}}f^{\mathcal{G}%
}\ \mathbb{E}_{\mathbb{G}}\mathcal{E}M_{\psi_{2}}f^{\mathcal{G}}\right\Vert
_{L^{\frac{q}{2}}\left(  A\left(  0,R\right)  \right)  }^{\frac{1}{2}}%
\lesssim\otimes_{2}\Lambda_{\delta}\left(  R\right)  ,\text{ \ \ \ \ for each
}i,\label{trans inv}%
\end{equation}
because the parabolic dilation $\delta_{t}$ with $t\leq1$ expands the supports
of $M_{\psi_{1}^{i}}f^{\mathcal{G}}$ and $M_{\psi_{2}}f^{\mathcal{G}}$, which
results in the dual parabolic dilation $\frac{1}{t^{d+1}}\delta_{\frac{1}{t}}$
being applied to the Fourier extension, which shrinks its support inside the
ball $B\left(  0,R\right)  $. Similarly for $j=2$.

The remaining summand in (\ref{summands}) satisfies the same estimate with
$s_{2}$ in place of $s_{1}$ by using (\ref{ab formual}), the fact that
$\psi_{j}^{\ast}$ is a finite sum of translates of $\psi_{j}$, and by the
translation invariance in (\ref{trans inv}). If we assemble these estimates in
(\ref{satis}) we obtain the `good lambda inequality'%
\begin{align*}
& \otimes_{2}\Lambda_{\delta}\left(  R\right)  \lesssim C\left(  \delta
r\right)  ^{2}\otimes_{2}\Lambda_{\delta}^{\ast}\left(  R\right)  +C\left(
\delta r\right)  ^{2}\\
& +\left(  \delta r\right)  ^{2}\sup_{\delta r\leq s_{1}\leq s_{2}\text{ and
}\frac{r}{1+\delta}\leq s_{2}\leq\frac{r}{1-\delta}}\sup_{\psi_{1},\psi_{2}%
}\sup_{\left\Vert f\right\Vert _{L^{\infty}}\leq1}\left\Vert \left\{  \left(
\mathbb{E}_{\mathbb{G}}a_{1}^{s_{1}}-\mathbb{E}_{\mathbb{G}}b_{1}^{s_{1}%
}\right)  \mathbb{E}_{\mathbb{G}}a_{2}^{s_{2}}+\mathbb{E}_{\mathbb{G}}%
b_{1}^{s_{1}}\left(  \mathbb{E}_{\mathbb{G}}a_{2}^{s_{2}}-\mathbb{E}%
_{\mathbb{G}}b_{2}^{s_{2}}\right)  \right\}  \right\Vert _{L^{\frac{q}{2}%
}\left(  A\left(  0,R\right)  \right)  }^{\frac{1}{2}}\\
& \lesssim C\left(  \delta r\right)  ^{2}\otimes_{2}\Lambda_{\delta}^{\ast
}\left(  R\right)  +C\left(  \delta r\right)  ^{2}+C\left(  \delta r\right)
^{2}\left(  2^{-\frac{1}{2}\delta r}+2^{-\frac{1}{2}r}\right)  \otimes
_{2}\Lambda_{\delta}\left(  R\right)  ,
\end{align*}
where the factor in front of $\otimes_{2}\Lambda_{\delta}\left(  R\right)  $
at the end of the last line, can be made as small as we wish by taking $r$
large enough. Thus if we take $A=A\left(  \delta\right)  $ sufficiently large,
we obtain using (\ref{def Lambda}) that%
\[
\otimes_{2}\Lambda_{\delta}\left(  R\right)  \lesssim C\left(  \delta
r\right)  ^{2}\otimes_{2}\Lambda_{\delta}^{\ast}\left(  R\right)  +C\left(
\delta r\right)  ^{2}+\frac{1}{2}\otimes_{2}\Lambda_{\delta}\left(  R\right)
,\ \ \ \ \ R\geq A\left(  \delta\right)  ,
\]
and hence upon absorbing the last term into the left hand side, we get the
bilinear inequality,%
\[
\otimes_{2}\Lambda_{\delta}\left(  R\right)  \lesssim C\left(  \delta
r\right)  ^{2}\otimes_{2}\Lambda_{\delta}^{\ast}\left(  R\right)  +C\left(
\delta r\right)  ^{2},\ \ \ \ \ R\geq A\left(  \delta\right)  .
\]

Arguing in the same way, but with fewer technicalities, we obtain the
\emph{linear} analogue of this inequality, namely%
\[
\Lambda_{\delta}\left(  R\right)  \lesssim C\left(  \delta r\right)
^{2}\Lambda_{\delta}^{\ast}\left(  R\right)  +C\left(  \delta r\right)
^{2},\ \ \ \ \ R\geq A\left(  \delta\right)  .
\]

Plugging the linear and bilinear inequalities into (\ref{end}) results in
replacing the standard multipliers on the right hand side with discrete
multipliers, i.e.%
\begin{align*}
& \left(  \int_{A\left(  0,R\right)  }\left\vert \mathbb{E}_{\mathbb{G}%
}\mathcal{E}M_{\psi_{1}}f\left(  \xi\right)  \ \mathbb{E}_{\mathbb{G}%
}\mathcal{E}M_{\psi_{2}}f\left(  \xi\right)  \right\vert ^{\frac{q}{2}}%
\ d\xi\right)  ^{\frac{2}{q}}\\
& \lesssim\sum_{\delta r<s_{1}\leq s_{2}\text{ and }\frac{r}{1+\delta}\leq
s_{2}\leq\frac{r}{1-\delta}}\left(  \int_{A\left(  0,2^{r}\right)  }\left\vert
\mathbb{E}_{\mathbb{G}}\mathcal{E}M_{\psi_{1}}\mathsf{Q}_{s_{1}}^{\eta
}f\left(  \xi\right)  \ \mathbb{E}_{\mathbb{G}}\mathcal{E}M_{\psi_{2}%
}\mathsf{Q}_{s_{2}}^{\eta}f\left(  \xi\right)  \right\vert ^{\frac{q}{2}%
}\ d\xi\right)  ^{\frac{2}{q}}\\
& +2^{d\delta r}\left\Vert f\right\Vert _{L^{\infty}}\left(  \left\Vert
f\right\Vert _{L^{\infty}}+\sup_{\frac{r}{1+\delta}\leq s_{2}\leq\frac
{r}{1-\delta}}\left\Vert \mathbb{E}_{\mathbb{G}}\mathcal{E}M_{\psi_{2}%
}\mathsf{Q}_{s_{2}}^{\eta}f\right\Vert _{L^{q}\left(  A\left(  0,2^{r}\right)
\right)  }\right)
\end{align*}
which for $R\geq A\left(  \delta\right)  $ is at most%
\begin{align*}
& \lesssim\left(  \delta r\right)  ^{2}\sum_{\delta r<s_{1}\leq s_{2}\text{
and }\frac{r}{1+\delta}\leq s_{2}\leq\frac{r}{1-\delta}}\left(  \int_{A\left(
0,2^{r}\right)  }\left\vert \mathbb{E}_{\mathbb{G}}\mathcal{E}M_{\psi_{1}%
}^{\mathcal{G}}\mathsf{Q}_{s_{1}}^{\eta}f\left(  \xi\right)  \ \mathbb{E}%
_{\mathbb{G}}\mathcal{E}M_{\psi_{2}}^{\mathcal{G}}\mathsf{Q}_{s_{2}}^{\eta
}f\left(  \xi\right)  \right\vert ^{\frac{q}{2}}\ d\xi\right)  ^{\frac{2}{q}%
}\\
& +2^{d\delta r}\left(  \delta r\right)  ^{2}\left\Vert f\right\Vert
_{L^{\infty}}\left(  \left\Vert f\right\Vert _{L^{\infty}}+\sup_{\frac
{r}{1+\delta}\leq s_{2}\leq\frac{r}{1-\delta}}\left\Vert \mathbb{E}%
_{\mathbb{G}}\mathcal{E}M_{\psi_{2}}^{\mathcal{G}}\mathsf{Q}_{s_{2}}^{\eta
}f\right\Vert _{L^{q}\left(  A\left(  0,2^{r}\right)  \right)  }\right)
+C\left(  \delta r\right)  ^{2}2^{d\delta r}.
\end{align*}

Thus in the notation of (\ref{def Lambda}), we have shown that%
\begin{align*}
\left[  \otimes_{2}\Lambda_{\delta}\left(  R\right)  \right]  ^{2}  &
\lesssim\left(  \delta r\right)  ^{2}\left[  \otimes_{2}\Lambda_{\delta}%
^{\ast}\left(  R\right)  \right]  ^{2}+2^{d\delta r}\left(  \delta r\right)
^{2}\left(  1+\Lambda_{\delta}^{\ast}\left(  R\right)  \right)
,\ \ \ \ \ R\geq A\left(  \delta\right)  ,\\
\text{i.e. }\otimes_{2}\Lambda_{\delta}\left(  R\right)   & \lesssim\delta
r\otimes_{2}\Lambda_{\delta}^{\ast}\left(  R\right)  +\sqrt{2^{d\delta
r}\left(  \delta r\right)  ^{2}\left(  1+\Lambda_{\delta}^{\ast}\left(
R\right)  \right)  },\ \ \ \ \ R\geq A\left(  \delta\right)  ,
\end{align*}
and combining this with (\ref{important}), namely $\mathfrak{N}_{R}^{\left(
q\right)  }\leq C_{\varepsilon,\delta,\kappa,q}R^{\varepsilon}\otimes
_{2}\Lambda_{\delta}\left(  R\right)  $, we obtain
\begin{equation}
\mathfrak{N}_{R}^{\left(  q\right)  }\lesssim R^{\varepsilon}\otimes
_{2}\Lambda_{\delta}\left(  R\right)  \lesssim R^{\varepsilon}\delta
r\otimes_{2}\Lambda_{\delta}^{\ast}\left(  R\right)  +R^{\varepsilon}%
R^{\frac{d\delta}{2}}\delta r\sqrt{1+\Lambda_{\delta}^{\ast}\left(  R\right)
},\ \ \ \ \ R\geq A\left(  \delta\right)  ,\label{important'}%
\end{equation}
which completes the proof of (\ref{will show}) since for $R<A\left(
\delta\right)  $, we simply use%
\[
\mathfrak{N}_{R}^{\left(  q\right)  }\lesssim R^{d}\leq A\left(
\delta\right)  ^{d}=C_{\delta}\text{.}%
\]

\subsection{The linear discretely mollified averaged testing condition}

We can now prove Theorem \ref{Thm LSST}.

\begin{proof}
[Proof of Theorem \ref{Thm LSST}]Clearly $\operatorname*{LSST}\left(
d,\kappa,\delta\right)  $ is necessary for the Fourier extension conjecture.
Conversely, suppose that (\ref{lin d}) holds for some $q\geq\frac{2d}{d-1}$.
Let $\delta r<s_{1}\leq s_{2}$ and $\frac{r}{1+\delta}<s_{2}<\frac{r}%
{1-\delta}$. Then H\"{o}lder's inequality gives%
\begin{align}
& \left\Vert \prod_{j=1}^{2}\mathbb{E}_{\mathbb{G}}\mathcal{E}M_{\psi_{j}%
}^{\mathcal{G}}\mathsf{Q}_{s_{j}}^{\eta,\mathcal{G}}f\right\Vert _{L^{\frac
{q}{2}}\left(  A\left(  0,R\right)  \right)  }=\left(  \int_{A\left(
0,R\right)  }\left\vert \left(  \mathbb{E}_{\mathbb{G}}\mathcal{E}M_{\psi_{1}%
}^{\mathcal{G}}\mathsf{Q}_{s_{1}}^{\eta,\mathcal{G}}f\left(  \xi\right)
\right)  \left(  \mathbb{E}_{\mathbb{G}}\mathcal{E}M_{\psi_{2}}^{\mathcal{G}%
}\mathsf{Q}_{s_{2}}^{\eta,\mathcal{G}}f\left(  \xi\right)  \right)
\right\vert ^{\frac{q}{2}}d\xi\right)  ^{\frac{2}{q}}\label{bilin to lin}\\
& \leq\left(  \int_{A\left(  0,2^{r}\right)  }\left\vert \mathbb{E}%
_{\mathbb{G}}\mathcal{E}M_{\psi_{1}}^{\mathcal{G}}\mathsf{Q}_{s_{1}}%
^{\eta,\mathcal{G}}f\left(  \xi\right)  \right\vert ^{q}d\xi\right)
^{\frac{1}{q}}\left(  \int_{A\left(  0,2^{r}\right)  }\left\vert
\mathbb{E}_{\mathbb{G}}\mathcal{E}M_{\psi_{2}}^{\mathcal{G}}\mathsf{Q}_{s_{2}%
}^{\eta,\mathcal{G}}f\left(  \xi\right)  \right\vert ^{q}d\xi\right)
^{\frac{1}{q}}\nonumber\\
& \leq\left(  \mathfrak{N}_{R}^{\left(  q\right)  }\left\Vert f\right\Vert
_{L^{\infty}\left(  U\right)  }\right)  \left(  \int_{A\left(  0,2^{r}\right)
}\left\vert \mathbb{E}_{\mathbb{G}}\mathcal{E}M_{\psi_{2}}^{\mathcal{G}%
}\mathsf{Q}_{s_{2}}^{\eta,\mathcal{G}}f\left(  \xi\right)  \right\vert
^{q}d\xi\right)  ^{\frac{1}{q}}.\nonumber
\end{align}
Thus we have%
\[
\left[  \otimes_{2}\Lambda_{\delta}^{\ast}\left(  R\right)  \right]  ^{2}%
\leq\mathfrak{N}_{R}^{\left(  q\right)  }\Lambda_{\delta}^{\ast}\left(
R\right)  ,
\]
and combining this with (\ref{will show}) gives,%
\begin{align*}
\mathfrak{N}_{R}^{\left(  q\right)  }  & \lesssim R^{\varepsilon}\delta
r\otimes_{2}\Lambda_{\delta}^{\ast}\left(  R\right)  +R^{\varepsilon}%
R^{\frac{d\delta}{2}}\delta r\sqrt{1+\Lambda_{\delta}^{\ast}\left(  R\right)
}\\
& \lesssim R^{\varepsilon}\delta r\sqrt{\mathfrak{N}_{R}^{\left(  q\right)
}\Lambda_{\delta}^{\ast}\left(  R\right)  }+R^{\varepsilon}R^{\frac{d\delta
}{2}}\delta r\sqrt{1+\Lambda_{\delta}^{\ast}\left(  R\right)  },
\end{align*}
which establishes the good lambda inequality (\ref{gli}) mentioned earlier as
the goal of this section.

Thus either%
\[
\mathfrak{N}_{R}^{\left(  q\right)  }\lesssim R^{\varepsilon}\delta
r\sqrt{\mathfrak{N}_{R}^{\left(  q\right)  }\Lambda_{\delta}^{\ast}\left(
R\right)  }\text{ or }\mathfrak{N}_{R}^{\left(  q\right)  }\lesssim
R^{\varepsilon}R^{\frac{d\delta}{2}}\delta r\sqrt{1+\Lambda_{\delta}^{\ast
}\left(  R\right)  },
\]
and so altogether using (\ref{lin d}) we have%
\[
\mathfrak{N}_{R}^{\left(  q\right)  }\lesssim R^{\varepsilon}\delta
r\sqrt{\mathfrak{N}_{R}^{\left(  q\right)  }2^{\varepsilon^{\prime}s}}\text{
or }\mathfrak{N}_{R}^{\left(  q\right)  }\lesssim R^{\varepsilon}%
R^{\frac{d\delta}{2}}\delta r\sqrt{2^{\varepsilon^{\prime}s}},
\]
hence%
\[
\mathfrak{N}_{R}^{\left(  q\right)  }\lesssim\max\left\{  R^{2\varepsilon
}\left(  \delta r\right)  ^{2}2^{\varepsilon^{\prime}r},R^{\varepsilon
}R^{\frac{d\delta}{2}}\delta r2^{\frac{\varepsilon^{\prime}}{2}s}\right\}
\lesssim R^{2\varepsilon}\left(  \log_{2}R\right)  ^{2}R^{\varepsilon^{\prime
}}R^{\frac{d\delta}{2}}\leq R^{2\varepsilon+\varepsilon+\varepsilon^{\prime
}+\frac{d\delta}{2}}.
\]
This and Theorem \ref{main} now complete the proof of Theorem \ref{Thm LSST}.
\end{proof}

\subsubsection{An improved projection}

At this point we have a characterization (\ref{lin d}) of the Fourier
extension conjecture in terms of the norms $\left\Vert \mathbb{E}_{\mathbb{G}%
}\mathcal{E}M_{\psi}^{\mathcal{G}}\mathsf{Q}_{s}^{\eta,\mathcal{G}%
}f\right\Vert _{L^{q}\left(  B\left(  0,2^{\frac{s}{1-\delta}}\right)
\right)  }$ taken over the balls $B\left(  0,2^{\frac{s}{1-\delta}}\right)  $
of averaged extensions of projections $\mathsf{Q}_{s}^{\eta,\mathcal{G}}f$. We
now show that we can replace the projections $\mathsf{Q}_{s}^{\eta
,\mathcal{G}}$ with perturbed pseudoprojections $\widetilde{\mathsf{Q}}%
_{s}^{\eta,\mathcal{G}}$ defined below that enjoy both a local property and a
periodicity property.

\begin{definition}
For $s\in\mathbb{N}$ and $\mathcal{G}\in\mathbb{G}$ denote by%
\[
\widetilde{\mathsf{Q}}_{s}^{\eta,\mathcal{G}}f\equiv\mathsf{Q}_{s}%
^{\eta,\mathcal{G}}T_{\mathcal{G}}^{\eta}f=\sum_{I\in\mathcal{G}_{s}%
}\left\langle f,h_{I;\kappa}^{\eta}\right\rangle h_{I;\kappa}^{\eta}\ ,
\]
the \emph{perturbed} pseudoprojection obtained by composing the projection
$\mathsf{Q}_{s}^{\eta,\mathcal{G}}$ with $T_{\mathcal{G}}^{\eta}$.
\end{definition}

Note that $\widetilde{\mathsf{Q}}_{s}^{\eta,\mathcal{G}}$ is a perturbation of
$\mathsf{Q}_{s}^{\eta,\mathcal{G}}$ in the sense that%
\begin{equation}
\mathsf{Q}_{s}^{\eta,\mathcal{G}}f-\widetilde{\mathsf{Q}}_{s}^{\eta
,\mathcal{G}}f=\sum_{I\in\mathcal{G}_{s}}\left\langle \left[  \left(
T_{\mathcal{G}}^{\eta}\right)  ^{-1}-\mathbb{I}\right]  f,h_{I;\kappa}^{\eta
}\right\rangle h_{I;\kappa}^{\eta}=\widetilde{\mathsf{Q}}_{s}^{\eta
,\mathcal{G}}\left[  \left(  T_{\mathcal{G}}^{\eta}\right)  ^{-1}%
-\mathbb{I}\right]  f\ ,\label{pert id}%
\end{equation}
where the operator norm $\left\Vert \left(  T_{\mathcal{G}}^{\eta}\right)
^{-1}-\mathbb{I}\right\Vert _{L^{p}\left(  \mathbb{R}^{d-1}\right)
\rightarrow L^{p}\left(  \mathbb{R}^{d-1}\right)  }$ is $O\left(  \eta\right)
$.

\begin{description}
\item[Important point] The perturbed projection has the property that
$M_{\psi}^{\mathcal{G}}\widetilde{\mathsf{Q}}_{s}^{\eta,\mathcal{G}%
}=\widetilde{\mathsf{Q}}_{s}^{\eta,\mathcal{G}}$ if $\operatorname*{Supp}%
f\subset\frac{1}{2}U$ and $\psi$ in Definition \ref{LSST} is chosen to have
support in $U$ and to equal $1$ on $\frac{1}{2}U$, because now the support of
$f$ in the inner product $\left\langle f,h_{I;\kappa}^{\eta}\right\rangle $
restricts the sums to the set where $M_{\psi}^{\mathcal{G}}$ is $1$. As a
consequence, we can remove the discrete multiplier $M_{\psi}^{\mathcal{G}}$,
with $\psi$ as above, from in front of $\widetilde{\mathsf{Q}}_{s}%
^{\eta,\mathcal{G}}$, and reinstate it when necessary below. From now on we
suppose that$\ \psi$ is smooth and $f$ is bounded with
\begin{equation}
\mathbf{1}_{\frac{1}{2}U}\leq\psi\leq\mathbf{1}_{\frac{3}{4}U}\text{ and
}\operatorname*{Supp}f\subset\frac{1}{2}U.\label{psi and f}%
\end{equation}

\end{description}

\begin{corollary}
\label{Cor LSST}Let $\frac{2d}{d-1}\leq q\leq p<\infty$. Then the inequality
(\ref{lin d}) holds if and only if%
\begin{equation}
\left\Vert \mathbb{E}_{\mathbb{G}}\mathcal{E}\widetilde{\mathsf{Q}}_{s}%
^{\eta,\mathcal{G}}f\right\Vert _{L^{q}\left(  B\left(  0,2^{\frac{s}%
{1-\delta}}\right)  \right)  }\leq C_{\varepsilon^{\prime},\delta,\kappa
,p,q}2^{s\varepsilon^{\prime}}\left\Vert f\right\Vert _{L^{p}\left(  U\right)
},\ \ \ \ \ \text{for all }f\in L^{p}\left(  U\right)  ,\label{lin d'}%
\end{equation}
and so is equivalent to the Fourier extension conjecture.
\end{corollary}

\begin{proof}
Since
\begin{align*}
\widetilde{\mathsf{Q}}_{s}^{\eta,\mathcal{G}}\mathbf{1}_{U}f  & =\mathsf{Q}%
_{s}^{\eta,\mathcal{G}}T_{\mathcal{G}}^{\eta}\mathbf{1}_{U}f=\mathsf{Q}%
_{s}^{\eta,\mathcal{G}}\left(  \mathbf{1}_{U}+\sum_{n=1}^{\infty}%
\mathbf{1}_{2^{n}U\setminus2^{n-1}U}\right)  T_{\mathcal{G}}^{\eta}%
\mathbf{1}_{U}f=\sum_{n=0}^{\infty}\mathsf{Q}_{s}^{\eta,\mathcal{G}}f_{n}\\
\text{where }f_{0}  & \equiv\mathbf{1}_{U}T_{\mathcal{G}}^{\eta}\mathbf{1}%
_{U}f\text{ and }f_{n}\equiv\mathbf{1}_{2^{n}U\setminus2^{n-1}U}%
T_{\mathcal{G}}^{\eta}\mathbf{1}_{U}f\text{ for }n\geq1,
\end{align*}
we have by (\ref{lin d}) that%
\begin{align*}
\left\Vert \mathbb{E}_{\mathbb{G}}\mathcal{E}\widetilde{\mathsf{Q}}_{s}%
^{\eta,\mathcal{G}}f\right\Vert _{L^{q}\left(  B\left(  0,2^{\frac{s}%
{1-\delta}}\right)  \right)  }  & \leq\sum_{n=0}^{\infty}\left\Vert
\mathbb{E}_{\mathbb{G}}\mathcal{E}\mathsf{Q}_{s}^{\eta,\mathcal{G}}%
f_{n}\right\Vert _{L^{q}\left(  B\left(  0,2^{\frac{s}{1-\delta}}\right)
\right)  }\\
& \leq C_{\varepsilon^{\prime},\delta,\kappa,p,q}2^{s\varepsilon^{\prime}%
}\left\Vert f_{0}\right\Vert _{L^{p}\left(  U\right)  }+\sum_{n=0}^{\infty
}A^{n}\left\Vert f_{n}\right\Vert _{L^{p}\left(  2^{n}U\right)  }%
\end{align*}
where the constant $A^{n}$ arises from the rescaling of $U$ to $2^{n}U$. From
the invertibility of $T_{\mathcal{G}}^{\eta}$ on $L^{p}$, and the rapid decay
of $f_{n}=\mathbf{1}_{2^{n}U\setminus2^{n-1}U}T_{\mathcal{G}}^{\eta}%
\mathbf{1}_{U}f$ given in the well localized properties of $T_{\mathcal{G}%
}^{\eta}$ in Lemma \ref{lem special}, we then claim that%
\[
\left\Vert \mathbb{E}_{\mathbb{G}}\mathcal{E}\widetilde{\mathsf{Q}}_{s}%
^{\eta,\mathcal{G}}f\right\Vert _{L^{q}\left(  B\left(  0,2^{\frac{s}%
{1-\delta}}\right)  \right)  }\leq C_{\varepsilon^{\prime},\delta,\kappa
,p,q}2^{s\varepsilon^{\prime}}\left\Vert f\right\Vert _{L^{p}\left(  U\right)
}\ .
\]

Indeed, it suffices to prove that there is $\tau>A$ such that
\[
\left\Vert f_{n}\right\Vert _{L^{p}\left(  U\right)  }\leq C_{\tau}2^{-\tau
n}\left\Vert f\right\Vert _{L^{p}\left(  U\right)  },\ \ \ \ \ \text{for
}n=0,1,2,...
\]
which by duality is equivalent to
\[
\left\vert \left\langle T_{\mathcal{G}}^{\eta}\mathbf{1}_{U}f,\mathbf{1}%
_{2^{n}U\setminus2^{n-1}U}g\right\rangle \right\vert =\left\vert \left\langle
f_{n},g\right\rangle \right\vert \leq C_{\tau}2^{-\tau n}\left\Vert
f\right\Vert _{L^{p}\left(  U\right)  }\left\Vert g\right\Vert _{L^{p^{\prime
}}\left(  U\right)  }.
\]
Now if $f$ is supported in $U$ and has vanishing moments up to order $\kappa$,
then%
\[
f=\sum_{I\in\mathcal{G}\left[  U\right]  }\left\langle f,h_{I;\kappa
}\right\rangle h_{I;\kappa}\ \text{and }g=\sum_{K\in\mathcal{G}}\left\langle
g,h_{K;\kappa}\right\rangle h_{K;\kappa}\ ,
\]
and so%
\begin{align}
\left\langle T_{\mathcal{G}}^{\eta}\mathbf{1}_{U}f,\mathbf{1}_{2^{n}%
U\setminus2^{n-1}U}g\right\rangle  & =\left\langle T_{\mathcal{G}}^{\eta}%
\sum_{I\in\mathcal{G}\left[  U\right]  }\left\langle f,h_{I;\kappa
}\right\rangle h_{I;\kappa},\mathbf{1}_{2^{n}U\setminus2^{n-1}U}\sum
_{K\in\mathcal{G}}\left\langle g,h_{K;\kappa}\right\rangle h_{K;\kappa
}\right\rangle \label{<Tf,g>}\\
& =\left\langle T_{\mathcal{G}}^{\eta}\sum_{I\in\mathcal{G}\left[  U\right]
}\left\langle f,h_{I;\kappa}\right\rangle h_{I;\kappa},\sum_{K\in
\mathcal{G}\left[  2^{n}U\setminus2^{n-1}U\right]  }\left\langle
g,h_{K;\kappa}\right\rangle h_{K;\kappa}\right\rangle \nonumber\\
& +\left\langle T_{\mathcal{G}}^{\eta}\sum_{I\in\mathcal{G}\left[  U\right]
}\left\langle f,h_{I;\kappa}\right\rangle h_{I;\kappa},\sum_{K\in
\mathcal{G}\setminus\mathcal{G}\left[  2^{n}U\setminus2^{n-1}U\right]
}\left\langle g,h_{K;\kappa}\right\rangle h_{K;\kappa}\right\rangle .\nonumber
\end{align}

For the inner product in the second line we have%
\begin{equation}
\left\langle T_{\mathcal{G}}^{\eta}\sum_{I\in\mathcal{G}\left[  U\right]
}\left\langle f,h_{I;\kappa}\right\rangle h_{I;\kappa},\sum_{K\in
\mathcal{G}\left[  2^{n}U\setminus2^{n-1}U\right]  }\left\langle
g,h_{K;\kappa}\right\rangle h_{K;\kappa}\right\rangle =\sum_{I\in
\mathcal{G}\left[  U\right]  }\sum_{J\in\mathcal{G}\left[  2^{n}%
U\setminus2^{n-1}U\right]  }\left\langle f,h_{I;\kappa}\right\rangle
\left\langle g,h_{J;\kappa}\right\rangle \left\langle T_{\mathcal{G}}^{\eta
}h_{I;\kappa},h_{J;\kappa}\right\rangle ,\label{well loc}%
\end{equation}
for all $I,J\in\mathcal{G}$, where we have replaced the dummy variable $K$ by
$J$, so as to avoid confusion when applying Lemma \ref{lem special},%
\begin{align*}
& \left\vert \left\langle T_{\mathcal{G}}^{\eta}\sum_{I\in\mathcal{G}\left[
U\right]  }\left\langle f,h_{I;\kappa}\right\rangle h_{I;\kappa},\sum
_{K\in\mathcal{G}\left[  2^{n}U\setminus2^{n-1}U\right]  }\left\langle
g,h_{K;\kappa}\right\rangle h_{K;\kappa}\right\rangle \right\vert \\
& \leq\sum_{I\in\mathcal{G}\left[  U\right]  }\sum_{J\in\mathcal{G}\left[
2^{n}U\setminus2^{n-1}U\right]  }\left\vert \left\langle f,h_{I;\kappa
}\right\rangle \right\vert \left\vert \left\langle g,h_{J;\kappa}\right\rangle
\right\vert \left\vert \left\langle T_{\mathcal{G}}^{\eta}h_{I;\kappa
},h_{J;\kappa}\right\rangle \right\vert \\
& \leq\sum_{I\in\mathcal{G}\left[  U\right]  }\sum_{J\in\mathcal{G}\left[
2^{n}U\setminus2^{n-1}U\right]  }\left\vert \left\langle f,h_{I;\kappa
}\right\rangle \right\vert \left\vert \left\langle g,h_{J;\kappa}\right\rangle
\right\vert C_{d-1}\frac{1}{\eta^{\kappa}}\left(  \frac{\ell\left(  I\right)
}{2^{n}}\right)  ^{\kappa+\frac{d-1}{2}}\left(  \frac{\ell\left(  J\right)
}{2^{n}}\right)  ^{\kappa+\frac{d-1}{2}},
\end{align*}
where we have used that $T_{\mathcal{G}}^{\eta}$ is self-adjoint.

Now we see that the inner product in the middle line of (\ref{<Tf,g>}) is
controlled by%
\begin{align*}
& \left\vert \left\langle T_{\mathcal{G}}^{\eta}\sum_{I\in\mathcal{G}\left[
U\right]  }\left\langle f,h_{I;\kappa}\right\rangle h_{I;\kappa},\sum
_{J\in\mathcal{G}\left[  2^{n}U\setminus2^{n-1}U\right]  }\left\langle
g,h_{J;\kappa}\right\rangle h_{J;\kappa}\right\rangle \right\vert \\
& \leq\sum_{I\in\mathcal{G}\left[  U\right]  }\sum_{J\in\mathcal{G}\left[
2^{n}U\setminus2^{n-1}U\right]  }\left\vert \left\langle f,h_{I;\kappa
}\right\rangle \left\langle g,h_{J;\kappa}\right\rangle \right\vert
C_{d-1}\frac{1}{\eta^{\kappa}}\left(  \frac{\ell\left(  I\right)  }{2^{n}%
}\right)  ^{\kappa+\frac{d-1}{2}}\left(  \frac{\ell\left(  J\right)  }{2^{n}%
}\right)  ^{\kappa+\frac{d-1}{2}},
\end{align*}
which is equal to%
\begin{align*}
& \sum_{I\in\mathcal{G}\left[  U\right]  }\sum_{J\in\mathcal{G}\left[
2^{n}U\setminus2^{n-1}U\right]  }\frac{\left(  \int\left\vert \bigtriangleup
_{I;\kappa}f\right\vert \right)  \left(  \int\left\vert \bigtriangleup
_{J;\kappa}g\right\vert \right)  }{\ell\left(  I\right)  ^{-\frac{d-1}{2}}%
\ell\left(  J\right)  ^{-\frac{d-1}{2}}}C_{d-1}\frac{1}{\eta^{\kappa}}\left(
\frac{\ell\left(  I\right)  }{2^{n}}\right)  ^{\kappa+\frac{d-1}{2}}\left(
\frac{\ell\left(  J\right)  }{2^{n}}\right)  ^{\kappa+\frac{d-1}{2}}\\
& =\left(  \int\sum_{I\in\mathcal{G}\left[  U\right]  }\frac{\left\vert
\bigtriangleup_{I;\kappa}f\right\vert }{\ell\left(  I\right)  ^{-\frac{d-1}%
{2}}}\left(  \frac{\ell\left(  I\right)  }{2^{n}}\right)  ^{\kappa+\frac
{d-1}{2}}\right)  \left(  \int\sum_{J\in\mathcal{G}\left[  2^{n}%
U\setminus2^{n-1}U\right]  }\frac{\left\vert \bigtriangleup_{J;\kappa
}g\right\vert }{\ell\left(  J\right)  ^{-\frac{d-1}{2}}}C_{d-1}\frac{1}%
{\eta^{\kappa}}\left(  \frac{\ell\left(  J\right)  }{2^{n}}\right)
^{\kappa+\frac{d-1}{2}}\right)  \equiv AB.
\end{align*}

The first factor $A$ satisfies,%
\begin{align*}
A  & =\int\sum_{I\in\mathcal{G}\left[  U\right]  }\frac{\left\vert
\bigtriangleup_{I;\kappa}f\right\vert }{\ell\left(  I\right)  ^{-\frac{d-1}%
{2}}}\left(  \frac{\ell\left(  I\right)  }{2^{n}}\right)  ^{\kappa+\frac
{d-1}{2}}\\
& \leq\int_{\left(  1+\eta\right)  U}\left(  \sum_{I\in\mathcal{G}\left[
U\right]  }\left\vert \bigtriangleup_{I;\kappa}f\right\vert ^{2}\right)
^{\frac{1}{2}}\left(  \sum_{I\in\mathcal{G}\left[  U\right]  }\left(  \frac
{1}{\ell\left(  I\right)  ^{-\frac{d-1}{2}}}\left(  \frac{\ell\left(
I\right)  }{2^{n}}\right)  ^{\kappa+\frac{d-1}{2}}\right)  ^{2}\right)
^{\frac{1}{2}}\\
& \leq\left(  \int\left(  \sum_{I\in\mathcal{G}\left[  U\right]  }\left\vert
\bigtriangleup_{I;\kappa}f\right\vert ^{2}\right)  ^{\frac{p}{2}}\right)
^{\frac{1}{p}}\left(  \sum_{I\in\mathcal{G}\left[  U\right]  }\left(  \frac
{1}{\ell\left(  I\right)  ^{-\frac{d-1}{2}}}\left(  \frac{\ell\left(
I\right)  }{2^{n}}\right)  ^{\kappa+\frac{d-1}{2}}\right)  ^{2}\right)
^{\frac{1}{2}}\\
& \leq\left\Vert f\right\Vert _{L^{p}}\left(  \sum_{I\in\mathcal{G}\left[
U\right]  }\left(  \frac{1}{\ell\left(  I\right)  ^{-\frac{d-1}{2}}}\left(
\frac{\ell\left(  I\right)  }{2^{n}}\right)  ^{\kappa+\frac{d-1}{2}}\right)
^{2}\right)  ^{\frac{1}{2}}\lesssim2^{-n\left(  2\kappa+d-1\right)
}\left\Vert f\right\Vert _{L^{p}}%
\end{align*}
by the square function theorem for Alpert wavelets, and since%
\begin{align*}
& \sum_{I\in\mathcal{G}\left[  U\right]  }\left(  \frac{1}{\ell\left(
I\right)  ^{-\frac{d-1}{2}}}\left(  \frac{\ell\left(  I\right)  }{2^{n}%
}\right)  ^{\kappa+\frac{d-1}{2}}\right)  ^{2}=\sum_{I\in\mathcal{G}\left[
U\right]  }\frac{1}{\ell\left(  I\right)  ^{-\left(  d-1\right)  }}\left(
\frac{\ell\left(  I\right)  }{2^{n}}\right)  ^{2\kappa+d-1}\\
& =\sum_{m=0}^{\infty}\frac{2^{m\left(  d-1\right)  -\left(  2m\kappa
+2m\left(  d-1\right)  \right)  }}{2^{n\left(  2\kappa+d-1\right)  }}%
=\sum_{m=0}^{\infty}\frac{2^{-m\left(  2\kappa+\left(  d-1\right)  \right)  }%
}{2^{n\left(  2\kappa+d-1\right)  }}=C2^{-n\left(  2\kappa+d-1\right)  }.
\end{align*}
The second factor is handled similarly to obtain%
\[
\left\vert \int\sum_{J\in\mathcal{G}\left[  2^{n}U\setminus2^{n-1}U\right]
}\frac{\left\vert \bigtriangleup_{J;\kappa}g\right\vert }{\ell\left(
J\right)  ^{-\frac{d-1}{2}}}C_{d-1}\frac{1}{\eta^{\kappa}}\left(  \frac
{\ell\left(  J\right)  }{2^{n}}\right)  ^{\kappa+\frac{d-1}{2}}\right\vert
\lesssim2^{-n\left(  2\kappa+d-1\right)  }\left\Vert f\right\Vert _{L^{p}}.
\]

It remains to control the term in the third line of (\ref{<Tf,g>}), namely%
\begin{align*}
& \left\langle T_{\mathcal{G}}^{\eta}\sum_{I\in\mathcal{G}\left[  U\right]
}\left\langle f,h_{I;\kappa}\right\rangle h_{I;\kappa},\sum_{K\in
\mathcal{G}\setminus\mathcal{G}\left[  2^{n}U\setminus2^{n-1}U\right]
}\left\langle g,h_{K;\kappa}\right\rangle h_{K;\kappa}\right\rangle \\
& =\sum_{I\in\mathcal{G}\left[  U\right]  }\sum_{K\in\mathcal{G}%
\setminus\mathcal{G}\left[  2^{n}U\setminus2^{n-1}U\right]  }\left\langle
f,h_{I;\kappa}\right\rangle \left\langle g,h_{K;\kappa}\right\rangle
\left\langle T_{\mathcal{G}}^{\eta}h_{I;\kappa},h_{K;\kappa}\right\rangle \\
& =\sum_{I\in\mathcal{G}\left[  U\right]  }\sum_{J\in\mathcal{G}%
\setminus\mathcal{G}\left[  2^{n}U\setminus2^{n-1}U\right]  }\left\langle
f,h_{I;\kappa}\right\rangle \left\langle g,h_{J;\kappa}\right\rangle
\left\langle T_{\mathcal{G}}^{\eta}h_{I;\kappa},h_{J;\kappa}\right\rangle ,
\end{align*}
where $\operatorname*{Supp}g\subset2^{n}U\setminus2^{n-1}U$, and we gave again
replaced the dummy variable $K$ with $J$. Then $\left\langle g,h_{J;\kappa
}\right\rangle \neq0$ implies $J\cap2^{n}U\setminus2^{n-1}U\neq\emptyset$, and
we see that such $J\,$must be far from $I$. But when $J$ is far from $I$, this
sum can be handled by the same techniques we just used for the second line in
(\ref{<Tf,g>}). This completes the proof of (\ref{lin d'}) and Corollary
\ref{Cor LSST}.
\end{proof}

We now introduce an expectation over a tiling.

\begin{definition}
For $s\in\mathbb{N}$, we denote by $\mathbb{E}_{\mathbb{T}_{s}}$ the average
over the family $\mathbb{T}_{s}\equiv\left\{  \mathcal{D}_{s}+v\right\}
_{\left\vert v\right\vert \leq2^{-s}}$ of tilings $\mathcal{D}_{s}+v$ of
$\mathbb{R}^{d-1}$, endowed with the uniform probability measure in $v$.
\end{definition}

Now we note that the average of the function $\mathcal{E}\widetilde
{\mathsf{Q}}_{s}^{\eta,\mathcal{G}}f$ over all grids $\mathcal{G}$ coincides
with its average over the tilings $\left\{  \mathcal{D}_{s}+v\right\}
_{\left\vert v\right\vert \leq2^{-s}}$, i.e.
\begin{equation}
\mathbb{E}_{\mathbb{G}}\mathcal{E}\widetilde{\mathsf{Q}}_{s}^{\eta
,\mathcal{G}}f=\mathbb{E}_{\mathbb{T}_{s}}\mathcal{E}\widetilde{\mathsf{Q}%
}_{s}^{\eta,\mathcal{G}},\ \ \ \ \ \text{ for }s\in\mathbb{N}\text{.}%
\label{eq}%
\end{equation}

\begin{definition}
We use the notation $\operatorname*{LSST}^{\prime}\left(  d,\kappa
,\delta\right)  $ to refer to the condition in Definition \ref{LSST} taken
with the pseudoprojection $\widetilde{\mathsf{Q}}_{s}^{\eta,\mathcal{G}}$ in
place of $\mathsf{Q}_{s}^{\eta,\mathcal{G}}$, i.e. for every $\varepsilon
^{\prime}>0$, there is a positive constant $C_{d,\varepsilon^{\prime}%
,\delta,\kappa}$ such that (\ref{lin d'}) holds, namely%
\begin{equation}
\left\Vert \mathbb{E}_{\mathbb{T}_{s}}\mathcal{E}\widetilde{\mathsf{Q}}%
_{s}^{\eta,\mathcal{G}}f\right\Vert _{L^{q}\left(  B\left(  0,2^{\frac
{s}{1-\delta}}\right)  \right)  }\leq C_{\varepsilon^{\prime},\delta
,\kappa,p,q}2^{s\varepsilon^{\prime}}\left\Vert f\right\Vert _{L^{p}\left(
U\right)  },\ \ \ \ \ \text{for all }f\in L^{p}\left(  U\right)
,\label{lind''}%
\end{equation}
when taken over all $s\in\mathbb{N}$, all smooth perturbed Alpert projections
$\widetilde{\mathsf{Q}}_{s}^{\eta,\mathcal{G}}$ in the grid $\mathcal{G}$ with
moment vanishing parameter $\kappa$, and all $f\in L^{q}\left(  U\right)  $
with $q\geq\frac{2d}{d-1}$.
\end{definition}

As a consequence of (\ref{eq}) and Corollary \ref{Cor LSST}, we obtain the
following reduction of the Fourier extension conjecture.

\begin{theorem}
\label{Thm LSST'}The Fourier extension conjecture for a smooth compact piece
of the paraboloid $\mathbb{P}^{d-1}$ in $\mathbb{R}^{d}$ holds if and only if
for every $0<\delta\leq\frac{1}{2}$, there exists $\kappa\in\mathbb{N} $ such
that $\operatorname*{LSST}^{\prime}\left(  d,\kappa,\delta\right)  $ holds.
\end{theorem}

\subsubsection{A convolution reformulation}

We can now prove the main theorem stated in the introduction that reformulates
the averaged linear testing characterization in Theorem \ref{Thm LSST'} as a
convolution testing condition.

\begin{theorem}
\label{Thm LSST''}Let $h_{I_{s};\kappa}^{\eta}\left(  x\right)  =2^{\frac
{s}{2}}h_{\left[  0,1\right)  ^{d-1};\kappa}^{\eta}\left(  2^{s}x\right)  $ be
the $L^{2}$-normalized dilation of the smooth Alpert mother wavelet
$h_{\left[  0,1\right)  ^{d-1};\kappa}^{\eta}$ constructed in \cite{Saw7}, and
having vanishing moments up to order $\kappa$. Then the Fourier extension
conjecture holds for a smooth compact piece $S$ of the paraboloid
$\mathbb{P}^{d-1}$ in $\mathbb{R}^{d}$ \emph{if and only if} for every
$\varepsilon>0$ there is a positive constant $C_{\varepsilon}$ and a positive
integer $\kappa=\kappa_{\varepsilon}$ such that%
\begin{align*}
& \left\Vert \mathcal{E}_{S}\left(  h_{I_{s};\kappa}^{\eta}\ast f\right)
\right\Vert _{L^{\frac{2d}{d-1}}\left(  B\left(  0,2^{\left(  1+\varepsilon
\right)  s}\right)  \right)  }\leq C_{\varepsilon}2^{\varepsilon s}\left\Vert
f\right\Vert _{L^{\frac{2d}{d-1}}\left(  U\right)  },\\
& \ \ \ \ \ \ \ \ \ \ \text{for all }s\in\mathbb{N}\text{ and }f\in
L^{\frac{2d}{d-1}}\left(  U\right)  .
\end{align*}

\end{theorem}

\begin{proof}
In view of Theorem \ref{Thm LSST'}, it suffices to show that
\[
\mathbb{E}_{\mathbb{T}_{s}}\mathcal{E}\widetilde{\mathsf{Q}}_{s}%
^{\eta,\mathcal{G}}f\left(  \xi\right)  =\mathcal{E}\left(  k_{I_{s};\kappa
}^{\eta}\ast k_{I_{s};\kappa}^{\eta}\ast f\right)  \left(  \xi\right)  ,
\]
where $k_{I_{s};\kappa}^{\eta}\equiv2^{\frac{d-1}{2}s}h_{I_{s};\kappa}^{\eta}$
is the $L^{1}$-normalized smooth Alpert wavelet. We compute%
\begin{align*}
& \mathbb{E}_{\mathbb{T}_{s}}\mathcal{E}\widetilde{\mathsf{Q}}_{s}%
^{\eta,\mathcal{G}}f\left(  \xi\right)  =2^{\left(  d-1\right)  s}%
\int_{\left[  0,2^{-s}\right)  ^{d-1}}\int e^{-i\xi\cdot\Phi\left(  x\right)
}\sum_{I\in\mathcal{D}_{s}+v}\left\langle f,h_{I;\kappa}^{\eta}\right\rangle
h_{I;\kappa}^{\eta}\left(  x\right)  dxdv\\
& =2^{\left(  d-1\right)  s}\int_{\left[  0,2^{-s}\right)  ^{d-1}}\int
e^{-i\xi\cdot\Phi\left(  x\right)  }\sum_{I\in\mathcal{D}_{s}}\left\langle
f,h_{I+v;\kappa}^{\eta}\right\rangle h_{I_{s};\kappa}^{\eta}\left(
x+v+\lambda_{I}\right)  dxdv\\
& =\int e^{-i\xi\cdot\Phi\left(  x\right)  }\sum_{I\in\mathcal{D}_{s}%
}2^{\left(  d-1\right)  s}\int_{\left[  0,2^{-s}\right)  ^{d-1}}\int f\left(
y\right)  h_{I+v;\kappa}^{\eta}\left(  y\right)  dyh_{I_{s};\kappa}^{\eta
}\left(  x+v+\lambda_{I}\right)  dvdx\\
& =\int e^{-i\xi\cdot\Phi\left(  x\right)  }\sum_{I\in\mathcal{D}_{s}%
}2^{\left(  d-1\right)  s}\int_{\left[  0,2^{-s}\right)  ^{d-1}}\int f\left(
y\right)  h_{I_{s};\kappa}^{\eta}\left(  y+v+\lambda_{I}\right)
dyh_{I_{s};\kappa}^{\eta}\left(  x+v+\lambda_{I}\right)  dvdx\\
& =\int\int\left\{  \sum_{I\in\mathcal{D}_{s}}2^{\left(  d-1\right)  s}%
\int_{\left[  0,2^{-s}\right)  ^{d-1}}e^{-i\xi\cdot\Phi\left(  x-v-\lambda
_{I}\right)  }f\left(  y-v-\lambda_{I}\right)  dv\right\}  h_{I_{s};\kappa
}^{\eta}\left(  y\right)  h_{I_{s};\kappa}^{\eta}\left(  x\right)  dydx
\end{align*}
where%
\begin{align*}
& \sum_{I\in\mathcal{D}_{s}}2^{\left(  d-1\right)  s}\int_{\left[
0,2^{-s}\right)  ^{d-1}}e^{-i\xi\cdot\Phi\left(  x-v\right)  }f\left(
y-v\right)  dv\\
& =\sum_{I\in\mathcal{D}_{s}}2^{\left(  d-1\right)  s}\int_{\left[
0,2^{-s}\right)  ^{d-1}+\lambda_{I}}e^{-i\xi\cdot\Phi\left(  x-v\right)
}f\left(  y-v\right)  dv\\
& =2^{\left(  d-1\right)  s}\int e^{-i\xi\cdot\Phi\left(  x-w\right)
}f\left(  y-w\right)  dw=2^{\left(  d-1\right)  s}\int e^{-i\xi\cdot
\Phi\left(  x-y+w\right)  }f\left(  w\right)  dw\\
& =2^{\left(  d-1\right)  s}\int e^{-i\xi\cdot\Phi\left(  w\right)  }f\left(
w+y-x\right)  dw\\
& =2^{\left(  d-1\right)  s}\int e^{-i\xi\cdot\Phi\left(  w\right)  }%
\tau_{y-x}f\left(  w\right)  dw=2^{\left(  d-1\right)  s}\mathcal{E}\left(
\tau_{y-x}f\right)  \left(  \xi\right)  .
\end{align*}
So altogether we have
\begin{align*}
\mathbb{E}_{\mathbb{T}_{s}}\mathcal{E}\widetilde{\mathsf{Q}}_{s}%
^{\eta,\mathcal{G}}f\left(  \xi\right)   & =\int\int2^{\left(  d-1\right)
s}\mathcal{E}\left(  \tau_{y-x}f\right)  \left(  \xi\right)  h_{I_{s};\kappa
}^{\eta}\left(  y\right)  h_{I_{s};\kappa}^{\eta}\left(  x\right)  dydx\\
& =\int\int\mathcal{E}\left(  \tau_{y-x}f\right)  \left(  \xi\right)
k_{I_{s};\kappa}^{\eta}\left(  y\right)  k_{I_{s};\kappa}^{\eta}\left(
x\right)  dydx\\
& =\mathcal{E}\left(  \int\int k_{I_{s};\kappa}^{\eta}\left(  y\right)
k_{I_{s};\kappa}^{\eta}\left(  x\right)  \tau_{y-x}f\left(  \cdot\right)
dydx\right)  \left(  \xi\right)  =\mathcal{E}\widetilde{f}\left(  \xi\right)
,
\end{align*}
where%
\[
\widetilde{f}\left(  z\right)  =\int\int k_{I_{s};\kappa}^{\eta}\left(
y\right)  k_{I_{s};\kappa}^{\eta}\left(  x\right)  f\left(  z+y-x\right)
dydx=k_{I_{s};\kappa}^{\eta}\ast k_{I_{s};\kappa}^{\eta}\ast f\left(
z\right)  .
\]

Finally, we note that for $\eta>0$ sufficiently small, $k_{I_{s};\kappa}%
^{\eta}\ast k_{I_{s};\kappa}^{\eta}$ is itself an $L^{1}$-normalized smooth
Alpert wavelet, which permits us to write $k_{I_{s};\kappa}^{\eta}$ in place
of $k_{I_{s};\kappa}^{\eta}\ast k_{I_{s};\kappa}^{\eta}$.
\end{proof}

\section{Appendix A: Bilinear norm ratio invariance}

If translation is defined by $\tau_{c}g\left(  x\right)  =g\left(  x+c\right)
$, then%
\begin{align*}
&  \mathcal{E}\tau_{c}f\left(  \xi\right)  =\int e^{-i\Phi\left(  x\right)
\cdot\xi}\tau_{c}f\left(  x\right)  dx=\int e^{-i\Phi\left(  x\right)
\cdot\xi}f\left(  x+c\right)  dx\\
&  =\int e^{-i\Phi\left(  x-c\right)  \cdot\xi}f\left(  x\right)  dx=\int
e^{-i\left\{  \xi^{\prime}\cdot\left(  x-c\right)  +\xi_{d}\left\vert
x-c\right\vert ^{2}\right\}  }f\left(  x\right)  dx=e^{-i\xi^{\prime}\cdot
c}e^{-i\xi_{d}\left\vert c\right\vert ^{2}}\mathcal{E}f\left(  \xi^{\prime
}-2\xi_{d}c,\xi_{d}\right)  ,
\end{align*}
which gives translation invariance in norm,%
\begin{align}
&  \left\Vert \left(  \mathcal{E}\tau_{c}f_{1}\right)  \left(  \mathcal{E}%
\tau_{c}f_{2}\right)  \right\Vert _{L^{\frac{q}{2}}\left(  \mathbb{R}%
^{d}\right)  }^{\frac{q}{2}}=\int_{\mathbb{R}}\left\{  \int_{\mathbb{R}^{d-1}%
}\left\vert \mathcal{E}f_{1}\left(  \xi^{\prime}-2\xi_{d}c,\xi_{d}\right)
\ \mathcal{E}f_{2}\left(  \xi^{\prime}-2\xi_{d}c,\xi_{d}\right)  \right\vert
^{\frac{q}{2}}d\xi^{\prime}\right\}  d\xi_{d}\label{trans}\\
&  =\int_{\mathbb{R}}\left\{  \int_{\mathbb{R}^{d-1}}\left\vert \mathcal{E}%
f_{1}\left(  \xi^{\prime},\xi_{d}\right)  \ \mathcal{E}f_{2}\left(
\xi^{\prime},\xi_{d}\right)  \right\vert ^{\frac{q}{2}}d\xi^{\prime}\right\}
d\xi_{d}=\left\Vert \left(  \mathcal{E}f_{1}\right)  \left(  \mathcal{E}%
f_{2}\right)  \right\Vert _{L^{\frac{q}{2}}\left(  \mathbb{R}^{d}\right)
}^{\frac{q}{2}}.\nonumber
\end{align}

If parabolic rescaling $\delta_{t}$ is defined by
\[
\delta_{t}F\left(  \xi^{\prime},\xi_{d}\right)  =F\left(  t\xi^{\prime}%
,t^{2}\xi_{d}\right)  ,
\]
then $\delta_{t}$ preserves functions defined on the paraboloid $\mathbb{P}%
^{d-1}=\left\{  \xi_{d}=\left\vert \xi^{\prime}\right\vert ^{2}\right\}  $,
and we have%
\begin{align*}
&  \left\Vert \left(  \delta_{t}\mathcal{E}f_{1}\right)  \left(  \delta
_{t}\mathcal{E}f_{2}\right)  \right\Vert _{L^{\frac{q}{2}}\left(
\mathbb{R}^{d}\right)  }^{\frac{q}{2}}=\int\left\vert \left(  \delta
_{t}\mathcal{E}f_{1}\right)  \left(  \delta_{t}\mathcal{E}f_{2}\right)
\right\vert ^{\frac{q}{2}}d\xi=\int_{\mathbb{R}}\int_{\mathbb{R}^{d-1}%
}\left\vert \mathcal{E}f_{1}\left(  t\xi^{\prime},t^{2}\xi_{d}\right)
\mathcal{E}f_{2}\left(  t\xi^{\prime},t^{2}\xi_{d}\right)  \right\vert
^{\frac{q}{2}}d\xi^{\prime}d\xi_{d}\\
&  =\int_{\mathbb{R}}\int_{\mathbb{R}^{d-1}}\left\vert \int e^{-i\left(
x,\left\vert x\right\vert ^{2}\right)  \cdot\left(  t\xi^{\prime},t^{2}\xi
_{d}\right)  }f_{1}\left(  x\right)  dx\int e^{-i\left(  y,\left\vert
y\right\vert ^{2}\right)  \cdot\left(  t\xi^{\prime},t^{2}\xi_{d}\right)
}f_{2}\left(  y\right)  dy\right\vert ^{\frac{q}{2}}d\xi^{\prime}d\xi_{d}\\
&  =\int_{\mathbb{R}}\int_{\mathbb{R}^{d-1}}\left\vert \int e^{-i\left(
tx,\left\vert tx\right\vert ^{2}\right)  \cdot\left(  \xi^{\prime},\xi
_{d}\right)  }f_{1}\left(  x\right)  dx\int e^{-i\left(  ty,\left\vert
ty\right\vert ^{2}\right)  \cdot\left(  \xi^{\prime},\xi_{d}\right)  }%
f_{2}\left(  y\right)  dy\right\vert ^{\frac{q}{2}}d\xi^{\prime}d\xi_{d}\\
&  =\int_{\mathbb{R}}\int_{\mathbb{R}^{d-1}}\left\vert \int e^{-i\left(
u,\left\vert u\right\vert ^{2}\right)  \cdot\left(  \xi^{\prime},\xi
_{d}\right)  }f_{1}\left(  \frac{u}{t}\right)  \frac{du}{t^{d-1}}\int
e^{-i\left(  v,\left\vert v\right\vert ^{2}\right)  \cdot\left(  \xi^{\prime
},\xi_{d}\right)  }f_{2}\left(  \frac{v}{t}\right)  \frac{dv}{t^{d-1}%
}\right\vert ^{\frac{q}{2}}d\xi^{\prime}d\xi_{d}=t^{-\left(  d-1\right)
q}\left\Vert \left(  \mathcal{E}\delta_{\frac{1}{t}}f\right)  \left(
\mathcal{E}\delta_{\frac{1}{t}}f\right)  \right\Vert _{L^{q}\left(
\mathbb{R}^{d}\right)  }^{\frac{q}{2}}.
\end{align*}

We also have%
\begin{align*}
\left\Vert \left(  \delta_{t}F_{1}\right)  \left(  \delta_{t}F_{2}\right)
\right\Vert _{L^{\frac{q}{2}}\left(  \mathbb{R}^{d}\right)  }^{\frac{q}{2}} &
=\int_{\mathbb{R}^{d}}\left\vert \delta_{t}F_{1}\left(  \xi\right)  \delta
_{t}F_{2}\left(  \xi\right)  \right\vert ^{\frac{q}{2}}d\xi=\int_{\mathbb{R}%
}\int_{\mathbb{R}^{d-1}}\left\vert F_{1}\left(  t\xi^{\prime},t^{2}\xi
_{d}\right)  F_{2}\left(  t\xi^{\prime},t^{2}\xi_{d}\right)  \right\vert
^{\frac{q}{2}}d\xi^{\prime}d\xi_{d}\\
&  =\int_{\mathbb{R}}\int_{\mathbb{R}^{d-1}}\left\vert F_{1}\left(
\eta^{\prime},\eta_{d}\right)  F_{2}\left(  \eta^{\prime},\eta_{d}\right)
\right\vert ^{\frac{q}{2}}\frac{d\eta^{\prime}}{t^{d-1}}\frac{d\eta_{d}}%
{t^{2}}=\frac{1}{t^{d+1}}\left\Vert F\right\Vert _{L^{\frac{q}{2}}\left(
\mathbb{R}^{d}\right)  }^{\frac{q}{2}}%
\end{align*}
and%
\begin{align*}
\left\Vert \delta_{\frac{1}{t}}H\right\Vert _{L^{p}\left(  \mathbb{R}%
^{d-1}\right)  }^{p} &  =\int_{\mathbb{R}^{d-1}}\left\vert \delta_{\frac{1}%
{t}}H\left(  x\right)  \right\vert ^{p}dx=\int_{\mathbb{R}^{d-1}}\left\vert
H\left(  \frac{x}{t}\right)  \right\vert ^{p}dx\\
&  =\int_{\mathbb{R}^{d-1}}\left\vert H\left(  y\right)  \right\vert
^{p}t^{d-1}dy=t^{d-1}\left\Vert H\right\Vert _{L^{p}\left(  \mathbb{R}%
^{d-1}\right)  }^{p},
\end{align*}
and so\ altogether,%
\begin{align*}
& \frac{\left\Vert \left(  \mathcal{E}f_{1}\right)  \left(  \mathcal{E}%
f_{2}\right)  \right\Vert _{L^{\frac{q}{2}}\left(  \mathbb{R}^{d}\right)
}^{\frac{1}{2}}}{\left\Vert f_{1}\right\Vert _{L^{p}\left(  \mathbb{R}%
^{d-1}\right)  }^{\frac{1}{2}}\left\Vert f_{2}\right\Vert _{L^{p}\left(
\mathbb{R}^{d-1}\right)  }^{\frac{1}{2}}}=\frac{t^{\frac{d+1}{q}}\left\Vert
\left(  \delta_{t}\mathcal{E}f_{1}\right)  \left(  \delta_{t}\mathcal{E}%
f_{2}\right)  \right\Vert _{L^{\frac{q}{2}}\left(  \mathbb{R}^{d}\right)
}^{\frac{1}{2}}}{\left\Vert f_{1}\right\Vert _{L^{p}\left(  \mathbb{R}%
^{d-1}\right)  }^{\frac{1}{2}}\left\Vert f_{2}\right\Vert _{L^{p}\left(
\mathbb{R}^{d-1}\right)  }^{\frac{1}{2}}}\\
& =t^{\frac{d+1}{q}}\frac{t^{-\left(  d-1\right)  }\left\Vert \left(
\mathcal{E}\delta_{\frac{1}{t}}f_{1}\right)  \left(  \mathcal{E}\delta
_{\frac{1}{t}}f_{2}\right)  \right\Vert _{L^{\frac{q}{2}}\left(
\mathbb{R}^{d}\right)  }^{\frac{1}{2}}}{t^{-\frac{d-1}{p}}\left\Vert
\delta_{\frac{1}{t}}f_{1}\right\Vert _{L^{p}\left(  \mathbb{R}^{d-1}\right)
}^{\frac{1}{2}}\left\Vert \delta_{\frac{1}{t}}f_{2}\right\Vert _{L^{p}\left(
\mathbb{R}^{d-1}\right)  }^{\frac{1}{2}}}=t^{\frac{d+1}{q}-\frac
{d-1}{p^{\prime}}}\frac{\left\Vert \left(  \mathcal{E}\delta_{\frac{1}{t}%
}f_{1}\right)  \left(  \mathcal{E}\delta_{\frac{1}{t}}f_{2}\right)
\right\Vert _{L^{\frac{q}{2}}\left(  \mathbb{R}^{d}\right)  }^{\frac{1}{2}}%
}{\left\Vert \delta_{\frac{1}{t}}f_{1}\right\Vert _{L^{p}\left(
\mathbb{R}^{d-1}\right)  }^{\frac{1}{2}}\left\Vert \delta_{\frac{1}{t}}%
f_{2}\right\Vert _{L^{p}\left(  \mathbb{R}^{d-1}\right)  }^{\frac{1}{2}}}.
\end{align*}

Thus the bilinear norm ratio%
\begin{equation}
\left\Vert \mathcal{E}\left(  f_{1},f_{2}\right)  \right\Vert
_{\operatorname*{bilinear}}^{\left(  p,q\right)  }\equiv\frac{\left\Vert
\left(  \mathcal{E}f_{1}\right)  \left(  \mathcal{E}f_{2}\right)  \right\Vert
_{L^{\frac{q}{2}}\left(  \mathbb{R}^{d}\right)  }^{\frac{1}{2}}}{\left\Vert
f_{1}\right\Vert _{L^{p}\left(  \mathbb{R}^{d-1}\right)  }^{\frac{1}{2}%
}\left\Vert f_{2}\right\Vert _{L^{p}\left(  \mathbb{R}^{d-1}\right)  }%
^{\frac{1}{2}}}\label{def bil norm rat}%
\end{equation}
satisfies%
\begin{equation}
\left\Vert \mathcal{E}\left(  f_{1},f_{2}\right)  \right\Vert
_{\operatorname*{bilinear}}^{\left(  p,q\right)  }=t^{\frac{d+1}{q}-\frac
{d-1}{p^{\prime}}}\left\Vert \mathcal{E}\left(  \tau_{c}\delta_{\frac{1}{t}%
}f_{1},\tau_{c}\delta_{\frac{1}{t}}f_{2}\right)  \right\Vert
_{\operatorname*{bilinear}}^{\left(  p,q\right)  }.\label{dil'}%
\end{equation}
In particular we have $\frac{d+1}{q}=\frac{d-1}{p^{\prime}}$ when
$p=q=\frac{2d}{d-1}$, and also%
\begin{equation}
\left\Vert \mathcal{E}\left(  f_{1},f_{2}\right)  \right\Vert
_{\operatorname*{bilinear}}^{\left(  \infty,q\right)  }=t^{-\frac{d-1}%
{q}\left[  q-\frac{d+1}{d-1}\right]  }\left\Vert \mathcal{E}\left(  \tau
_{c}\delta_{\frac{1}{t}}f_{1},\tau_{c}\delta_{\frac{1}{t}}f_{2}\right)
\right\Vert _{\operatorname*{bilinear}}^{\left(  \infty,q\right)
}.\label{dil''}%
\end{equation}

\end{document}